\theoremstyle{plain}
\newtheorem{thm}{Theorem}[section]
\newtheorem{cor}[thm]{Corollary}
\newtheorem{lem}[thm]{Lemma}
\newtheorem{prop}[thm]{Proposition}
\newtheorem{exam}[thm]{Example}
\newtheorem{rem}[thm]{Remark}
\def\sqr#1#2{{\vcenter{\vbox{\hrule height.#2pt
              \hbox{\vrule width.#2pt height#1pt \kern#1pt \vrule
width.#2pt}
              \hrule height.#2pt}}}}
\def\3n{\negthinspace \negthinspace \negthinspace }
\def\2n{\negthinspace \negthinspace }
\def\1n{\negthinspace }
\def\cA{{\cal A}}
\def\no{\noindent}
\def\ms{\medskip}
\def\bs{\bigskip}
\def\dim{\hbox{\rm dim$\,$}}
\def\span{\hbox{\rm span$\,$}}
\def\tr{\hbox{\rm tr$\,$}}
\def\({\Big (}
\def\){\Big )}
\def\[{\Big[}
\def\]{\Big]}
\def\be{\begin{equation}}
\def\bel{\begin{equation}\label}
\def\ee{\end{equation}}
\def\bea{\begin{eqnarray}}
\def\eea{\end{eqnarray}}
\def\bt{\begin{theorem}}
\def\et{\end{theorem}}
\def\bc{\begin{corollary}}
\def\ec{\end{corollary}}
\def\bl{\begin{lemma}}
\def\el{\end{lemma}}
\def\bp{\begin{proposition}}
\def\ep{\end{proposition}}
\def\br{\begin{remark}}
\def\er{\end{remark}}
\def\ba{\begin{array}}
\def\ea{\end{array}}
\def\bd{\begin{definition}}
\def\ed{\end{definition}}
\newcommand{\poly}{\mathbb{C}[z_1,\ldots,z_d]}
\newcommand{\Hol}{\mathrm{Hol~}}
\newcommand{\Id}{\mathrm{Id}}
\begin{document}

\title{\bf Essential normality of homogenous quotient modules over the polydisc: distinguished variety case}
\author{ Penghui Wang\thanks{ Partially supported by NSFC(No.11101240),   E-mail: phwang@sdu.edu.cn }
\quad Chong Zhao \thanks{ Partially supported by Shandong Province Natural Science Foundation ZR2014AQ009 and The Fundamental Research Funds of Shandong University 2015GN017, {
E-mail:} { chong.zhao@sdu.edu.cn}. \ms}
 \\ \\
 Department of Mathematics, Shandong University\\
Jinan, Shandong 250100, The People's Republic of China\\
}

\maketitle
\begin{abstract}
In the present paper, we study the essential normality of quotient modules over the polydisc. It is shown that if the zero variety of homogenous ideal $I$ is a distinguished variety, then its quotient module is $(1,\infty)$-essentially normal. Moreover, we study the boundary representation of quotient modules.
\end{abstract}

\bs

\no{\bf 2000 MSC}. 47A13, 46H25

\bs

\no{\bf Key Words}. Essential normality, Hardy space over polydisc, quotient module, boundary representation
\section{Introduction}
~~~~Let $\mathbb D^d=\{(z_1,\ldots,z_d): |z_i|<1,i=1,\ldots,d\}$ be the unit polydisc in $\mathbb C^d$, and  $H^2(\mathbb D^d)$ the Hardy space over $\mathbb D^d$. It plays an important role in multivariable operator theory and function thoery\cite{Ru}. The present paper is devoted to study the essential normality of homogenous quotient modules of $H^2(\mathbb D^d)$ when $d\geq 3$, and their boundary representations.

The theory of Hilbert module, introduced by Douglas and Paulsen \cite{DP}, is a natural language in the study of multivariable operator theory. Given a tuple $\underline{T}=(T_1,\ldots,T_d)$ of commuting operators on a Hilbert space $H$, one can naturally make $H$ into a Hilbert module over the polynomial ring $\mathbb C[z_1,\ldots,z_d]$, with the module action defined as
$$
p\cdot x=p(T_1,\ldots, T_d)x, \quad p\in \mathbb C[z_1,\ldots,z_d], \quad x\in H.
$$
Motivated by the BDF-theory\cite{BDF}, one of the fundamental problems in the Hilbert module theory is to study the essential commutativity of the $C^*$-algebra $C^*(\underline{T})$ generated by $Id,T_1,\ldots,T_d$, where $Id$ is the identity on $H$. When this happens, $H$ is said to be essentially normal. In \cite{Ar1}, Arveson conjectured that graded submodules of the $d$-shift module over the unit ball are essentially normal, and much work has been done along this line, such as \cite{Ar1,Ar3,Dou1,Dou2,DW,Guo,GWk1,GWk2,KS} and references therein.

In the present paper we consider the polydisc version of Arveson's conjecture. It is easy to see that all the nontrivial submodules are not essentially normal provided $d\geq2$. Hence in the case of the unit polydisc, one can only consider essential normality of quotient modules. Let $\mathcal{M}$ be a submodule of $H^2(\mathbb D^d)$, denote by $\mathcal{N}=\mathcal{M}^\perp$ and write
$$S_{z_i}=P_\mathcal{N}M_{z_i}\mid_\mathcal{N},$$
which is the compression of the multiplication operator $M_{z_i}$ on $\mathcal{N}$. Then $\mathcal{N}$ is naturally equipped with a $\mathbb C[z_1,\ldots,z_d]$-module structure by the tuple $(S_{z_1},\ldots, S_{z_d})$, and called a quotient module of $H^2(\mathbb D^d)$.

The first result along this line was due to Douglas and Misra \cite{DM}, who showed that some quotient modules are essentially normal and some are not. By restricting the Hardy space to the diagonal, Clark \cite{Cla} identified the quotient module generated by $\{B_i(z_i)-B_j(z_j);i,j=1,\cdots,d\}$ for finite Blaschke products $B_i(z_i)$ as a kind of Bergman space on some variety, hence it is essentially normal. The essential normality of (quasi-)homogenous quotient modules for $d=2$ was completely characterized by Guo and the first author \cite{GWp1,GWp2}, and $p$-essential normality was studied in \cite{GWZ}.

Briefly, the answer to the polydisc version of Arveson's conjecture  is totally different from the original question on the unit ball.

In this paper, we mainly consider homogenous quotient modules of $H^2(\mathbb D^d)$, which is a continuation of \cite{GWp1}. Let $I$ be a homogenous ideal of $\mathbb C[z_1,\ldots, z_d]$ and $V$ be its zero variety. In the case of the unit ball, Douglas, Tang and Yu \cite{DTY} proved that, if $\dim_{\mathbb{C}}V\geq 2$ then both $[I]$ and $[I]^\perp$ are essentially normal. Similar to \cite{GWp2,Wa}, we have the following proposition.
\begin{prop}
Let $V$ be the zero variety of a homogenous ideal $I$. If $\dim_{\mathbb{C}}V\geq2$, then the quotient module $[I]^\perp$ of $H^2(\mathbb{D}^d)$ is not essentially normal.
\end{prop}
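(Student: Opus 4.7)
The overall strategy is to exhibit a weakly null sequence of unit vectors $\{f_n\}\subset\mathcal{N}=[I]^\perp$ on which some self-commutator $[S_{z_i}^*,S_{z_i}]$ acts with norm bounded away from zero, contradicting compactness of the commutator.

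The first step is an algebraic reduction. From $\dim V\geq 2$ I would deduce that the Krull dimension of $\mathbb{C}[z_1,\ldots,z_d]/I$ is at least two. A transcendence-degree argument then produces two coordinates (say $z_1$ and $z_2$ after relabeling) whose images in the quotient ring are algebraically independent over $\mathbb{C}$; equivalently $I\cap\mathbb{C}[z_1,z_2]=\{0\}$. In particular $z_2^n\notin I$ for every $n$.

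The second step treats the model case $V\supseteq\{z_3=\cdots=z_d=0\}$, equivalently $I\subseteq(z_3,\ldots,z_d)$. Let $\mathcal{K}\subseteq H^2(\mathbb{D}^d)$ denote the closed subspace of functions depending only on $(z_1,z_2)$, isometrically isomorphic to $H^2(\mathbb{D}^2)$. A direct orthogonality check, using that every element of $I$ has every monomial divisible by some $z_j$ with $j\geq 3$, gives $\mathcal{K}\subseteq\mathcal{N}$, and $\mathcal{K}$ is invariant under each $S_{z_i}^*$ because $M_{z_i}^*$ preserves $\mathcal{K}$ for every $i$. With respect to the decomposition $\mathcal{N}=\mathcal{K}\oplus(\mathcal{N}\ominus\mathcal{K})$, the operator $S_{z_1}$ is lower block-triangular, and a direct calculation shows the $(1,1)$-block of $[S_{z_1}^*,S_{z_1}]$ equals
\[
[M_{z_1}^*|_{\mathcal{K}},M_{z_1}|_{\mathcal{K}}]+B^*B,
\]
where $B$ denotes the $(\mathcal{N}\ominus\mathcal{K},\mathcal{K})$-block of $S_{z_1}$. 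The first summand is the infinite-rank orthogonal projection onto functions of $z_2$ alone, and $B^*B\geq 0$; so the $(1,1)$-block dominates an infinite-rank projection and is therefore not compact. Since compactness of $[S_{z_1}^*,S_{z_1}]$ would force compactness of its $(1,1)$-block, this gives the claim.

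For general homogeneous $V$ of dimension $\geq 2$, $V$ need not contain any coordinate plane (e.g.\ $V=\{z_1z_2=z_3^2\}\subset\mathbb{C}^3$), so the reducing-subspace argument does not apply. My plan for this case is to use the graded test vectors $f_n=P_{\mathcal{N}}(z_2^n)/\|P_{\mathcal{N}}(z_2^n)\|$, which are well-defined because $z_2^n\notin I$ and live in the mutually orthogonal graded pieces $\mathcal{N}_n=H_n\ominus I_n$, hence form a weakly null orthonormal sequence. The identity
\[
\langle[S_{z_1}^*,S_{z_1}]f_n,f_n\rangle=\|P_{\mathcal{N}}(z_1f_n)\|^2-\|M_{z_1}^*f_n\|^2
\]
reduces the claim to a uniform lower bound on the right-hand side. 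Following the strategy of \cite{GWp2,Wa}, the bound should be extracted from the algebraic independence of $z_1,z_2$ modulo $I$ together with the graded description of $[I]$: algebraic independence keeps $z_1z_2^n$ from collapsing entirely into $[I]$, while $z_2^n\notin I$ prevents $f_n$ from degenerating. Producing this uniform bound is the main technical obstacle, and it is precisely here that the hypothesis $\dim V\geq 2$ (as opposed to the one-dimensional distinguished-variety case treated in the main body of the paper) becomes decisive.
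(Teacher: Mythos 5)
Your model case is handled correctly, and in fact more easily than you state: when $I\subseteq(z_3,\ldots,z_d)$ the subspace $\mathcal{K}\cong H^2(\mathbb{D}^2)$ of functions of $(z_1,z_2)$ lies in $\mathcal{N}$ and is invariant under both $S_{z_1}$ and $S_{z_1}^*$ (for $f\in\mathcal{K}$ one has $z_1f\in\mathcal{K}\subseteq\mathcal{N}$, so $S_{z_1}f=z_1f$), hence your block $B$ is zero and the compression of $[S_{z_1}^*,S_{z_1}]$ to $\mathcal{K}$ is exactly the infinite-rank projection onto functions of $z_2$ alone. The genuine gap is the general case, which is the actual content of the proposition, since a homogeneous variety of dimension $\geq 2$ need not contain any coordinate plane (your own example $z_1z_2=z_3^2$). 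There you only propose the test vectors $f_n=P_{\mathcal{N}}(z_2^n)/\|P_{\mathcal{N}}(z_2^n)\|$ and assert that a uniform lower bound for $\langle[S_{z_1}^*,S_{z_1}]f_n,f_n\rangle$ "should" follow from algebraic independence of $z_1,z_2$ modulo $I$; no mechanism is given, and you acknowledge it is the main obstacle. Note also that $P_{\mathcal{N}}(z_2^n)$ need not equal $z_2^n$, so even the term $\|M_{z_1}^*f_n\|^2$ in your identity is not under control, let alone the projection norm $\|P_{\mathcal{N}}(z_1f_n)\|^2$, which requires quantitative information about the graded pieces of $[I]$ that the proposal never extracts. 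As it stands this is a proof of a special case plus a plan, not a proof.

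For comparison, the paper does not write out the argument but states that it is the Berezin transform and zero-variety analysis of Guo--Wang (the reference \cite{GWp2}), and that route avoids precisely the estimate you are missing. For $\lambda\in V\cap\mathbb{D}^d$ the reproducing kernel $K_\lambda$ automatically lies in $[I]^\perp$, and the normalized kernels satisfy $S_{z_i}^*k_\lambda=\bar{\lambda}_ik_\lambda$, so
\begin{equation*}
\langle[S_{z_i}^*,S_{z_i}]k_\lambda,k_\lambda\rangle=\|P_{\mathcal{N}}(z_ik_\lambda)\|^2-|\lambda_i|^2
\end{equation*}
with no need to compute projections of individual monomials; the hypothesis $\dim_{\mathbb{C}}V\geq2$ enters by guaranteeing points of $V\cap\mathbb{D}^d$ tending to boundary points of $\partial\mathbb{D}^d$ that are not in $\mathbb{T}^d$ (some coordinate staying strictly inside the disc), and along such a weakly null family of kernels the Berezin transform of a suitable self-commutator stays bounded away from zero, contradicting compactness. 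If you want to complete your graded-vector approach you would have to prove the uniform bound you flagged; otherwise the kernel/Berezin argument is the standard way to close the general case.
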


The proof of this proposition is based on the Berezin transform and analysis of zero variety, which is almost the same as that given in \cite{GWp2}, and we will not perform it here. Based on this observation, to study the essential normality of homogenous quotient modules, we will only consider the case $\dim_{\mathbb{C}}Z(I)=1$. We will see in Proposition \ref{prop:varietydim} that, if the zero variety is distinguished, then $\dim_{\mathbb{C}}Z(I)=1$. Therefore in the present paper, we mainly consider the distinguished variety case. The following theorem is the main result of this paper.

\begin{thm}
    If $I\subset\poly$ is a homogeneous ideal such that $Z(I)\cap\partial\mathbb{D}^d\subset\mathbb{T}^d$, then the quotient module $\mathcal{M}=[I]^\bot$ is $(1,\infty)$-essentially normal.
\end{thm}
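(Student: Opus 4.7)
The plan is to exhibit an asymptotically orthonormal basis of $\mathcal{M}$ indexed by the irreducible components of $Z(I)$, and to show that on this basis the compressed coordinate operators act as diagonal scalar shifts up to errors of size $O(1/n)$ on the $n$-th homogeneous level.

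By Proposition~\ref{prop:varietydim}, $\dim_{\mathbb{C}}Z(I)=1$, and since $I$ is homogeneous, $Z(I)$ is a finite union of complex lines $L_j=\mathbb{C}\mathbf{w}_j$ ($1\le j\le k$). The distinguishedness hypothesis $Z(I)\cap\partial\mathbb{D}^d\subset\mathbb{T}^d$ forces $|\mathbf{w}_j^{(1)}|=\cdots=|\mathbf{w}_j^{(d)}|=1$. For every $n\ge 0$ the degree-$n$ Szeg\H{o} kernel
\[
k_{\mathbf{w}_j}^{(n)}(z):=\sum_{|\alpha|=n}\overline{\mathbf{w}_j}^{\alpha}z^{\alpha}
\]
lies in $\mathcal{M}_n=(I_n)^{\bot}$, since $I$ is graded and vanishes at $\mathbf{w}_j$. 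As all $|\mathbf{w}_j^{(i)}|=1$, $\|k_{\mathbf{w}_j}^{(n)}\|^{2}=\binom{n+d-1}{d-1}$; for $j\ne j'$ the inner product equals the complete homogeneous symmetric polynomial $h_n(\mathbf{w}_j\overline{\mathbf{w}_{j'}})$, whose argument has unimodular entries with at least two distinct values (else $L_j=L_{j'}$), so partial fractions give $h_n=O(n^{d-2})$. Therefore the normalizations $e_j^{(n)}:=k_{\mathbf{w}_j}^{(n)}/\|k_{\mathbf{w}_j}^{(n)}\|$ satisfy $\langle e_j^{(n)},e_{j'}^{(n)}\rangle=\delta_{jj'}+O(n^{-1})$. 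Assuming $I$ is radical, the Hilbert polynomial gives $\dim\mathcal{M}_n=k$ for all large $n$, so $\{e_j^{(n)}\}_{j=1}^{k}$ is an asymptotic orthonormal basis of $\mathcal{M}_n$.

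Since $\mathcal{M}$ is co-invariant, $S_{z_i}^{*}=M_{z_i}^{*}|_{\mathcal{M}}$, and a direct computation yields $S_{z_i}^{*}k_{\mathbf{w}_j}^{(n)}=\overline{\mathbf{w}_j^{(i)}}\,k_{\mathbf{w}_j}^{(n-1)}$, whence
\[
S_{z_i}^{*}e_j^{(n)}=\overline{\mathbf{w}_j^{(i)}}\sqrt{\tfrac{n}{n+d-1}}\,e_j^{(n-1)}.
\]
Taking the adjoint and inverting the near-identity Gram matrix shows that the expansion of $S_{z_j}e_\ell^{(n)}$ in $\{e_{\ell'}^{(n+1)}\}$ has diagonal coefficient $\mathbf{w}_\ell^{(j)}\sqrt{(n+1)/(n+d)}+O(n^{-1})$ and off-diagonal coefficients $O(n^{-1})$. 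The commutator $[S_{z_i}^{*},S_{z_j}]$ preserves each $\mathcal{M}_n$, and substituting the two formulas its $(\ell',\ell)$ matrix entry on $\mathcal{M}_n$ is
\[
\mathbf{w}_\ell^{(j)}\overline{\mathbf{w}_\ell^{(i)}}\Bigl(\tfrac{n+1}{n+d}-\tfrac{n}{n+d-1}\Bigr)\delta_{\ell\ell'}+O(n^{-1})=O(n^{-1}),
\]
the diagonal weight difference being $\tfrac{d-1}{(n+d)(n+d-1)}=O(n^{-2})$. Passing to an orthonormal basis costs only a factor $1+O(n^{-1})$, so every singular value in the $n$-th block is $O(n^{-1})$. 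Since each block has dimension $k$, the ordered singular values of the full commutator satisfy $\mu_m=O(m^{-1})$, and hence $[S_{z_i}^{*},S_{z_j}]\in\mathcal{L}^{(1,\infty)}$.

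\textbf{Main obstacle.} The principal difficulty is the non-radical case: if some component $L_j$ carries multiplicity $m_j>1$ in the primary decomposition of $I$, then $\dim\mathcal{M}_n>k$ eventually and the kernels above no longer span $\mathcal{M}_n$. One must adjoin, at each such $\mathbf{w}_j$, generalized reproducing kernels obtained by applying transverse homogeneous differential operators (the Noetherian operators of the primary component at $L_j$), verify that they lie in $\mathcal{M}$, and redo the Gram and commutator estimates. The generalized kernels introduce extra polynomial factors in $n$, so careful bookkeeping is needed, but the leading-order cancellation producing the $O(n^{-2})$ diagonal and the structural $O(n^{-1})$ off-diagonal entries should persist, still yielding membership in $\mathcal{L}^{(1,\infty)}$.
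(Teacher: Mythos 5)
Your argument for the \emph{radical} case is essentially correct: for $I=\bigcap_j J_j$ an intersection of the vanishing ideals of distinct lines $L_j=\mathbb{C}\mathbf{w}_j$ with $\mathbf{w}_j\in\mathbb{T}^d$, the degree-$n$ piece $\mathcal{M}_n$ is eventually spanned by the homogeneous Szeg\H{o} kernel components $k^{(n)}_{\mathbf{w}_j}$, your formula $M_{z_i}^*k^{(n)}_{\mathbf{w}_j}=\overline{\mathbf{w}_j^{(i)}}k^{(n-1)}_{\mathbf{w}_j}$ is right, the Gram estimate $\delta_{jj'}+O(n^{-1})$ follows from $h_n(u)=O(n^{d-2})$ exactly as you say, and the resulting $O(n^{-1})$ bound on each $k\times k$ block of $[S_{z_i}^*,S_{z_j}]$ does give membership in $\mathcal{L}^{(1,\infty)}$. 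This is genuinely more elementary than the paper's route, which even in that case passes through the restriction operators $r_g$, the identification of $[J_\theta^N]^\bot$ with sums of weighted Bergman spaces, and a gluing of the components via separating polynomials and a Fredholmness argument; your graded block analysis handles all components simultaneously and also recovers the asymptotic orthogonality of Proposition \ref{prop:semisimple} for free.

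The gap is the non-radical case, which you only sketch and which is not a routine extension: it is where the theorem's content actually lies, since the hypothesis allows an arbitrary homogeneous ideal with distinguished zero set (e.g.\ any $I_k$ with $J_k^N\subset I_k\subset J_k$). Once a component carries multiplicity, your spanning family must be enlarged by Noetherian-operator (derivative-type) kernels, and the two facts your argument rests on are no longer automatic: (i) that $\mathcal{M}_n$ admits, for all large $n$, a basis of such generalized kernels whose Gram matrix is uniformly bounded and uniformly invertible (the derivative kernels have norms with extra polynomial factors in $n$, and the cross terms between different derivative levels and different lines must be shown to be uniformly small after normalization -- this is what Propositions \ref{prop:characterization}--\ref{prop:shift} of the paper accomplish for $[J^N]^\bot$ via the M\"obius-invariance computations, and it is not a formal consequence of the radical-case estimates); and (ii) that for a primary ideal which is \emph{not} a power of $J_k$ one can even write down the Noetherian operators explicitly and control them degree by degree -- the paper deliberately avoids this by a structural argument (multicyclicity plus Voiculescu's Berger--Shaw theorem in Theorem \ref{thm:primary}, then the Fredholm gluing in the proof of Theorem \ref{thm:main}) rather than explicit kernels. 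Asserting that the "leading-order cancellation should persist" is therefore not a proof of the stated theorem but a restatement of its main difficulty; as written, your proposal proves the result only for radical $I$, and the general case needs either the explicit generalized-kernel estimates carried out in full or a substitute structural argument of the paper's kind.
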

\begin{rem}
The distinguished variety in the bidisk was introduced in \cite{AM}. And the previous work \cite{GWp1,GWp2,Wa} suggests that the essential normality of quotient modules of the Hardy module on the polydisc is closely related to the distinguished variety.
\end{rem}

The key-step to prove the theorem is to consider the $p$-essential normality of $[J^n]^\perp$ for some prime ideal $J$ such that $Z(J)$ is a distinguished variety and $(1,1,\ldots,1)\in Z(J)$. To this end, the main tool is the restriction map $r:H^2(\mathbb D^d)\to Hol(\mathbb D)$ defined by $r(f)(z)=f(z,\ldots z)$, which was introduced in \cite{FR1} for the bidisk case.

\begin{thm}Let $\mathcal N=[J^N]^\perp$, then  for $f_1,f_2\in\poly$,
    $$\tr[S_{f_1}^*,S_{f_2}]=\binom{d+N-2}{d-1}\langle (rf_2)^\prime,(rf_1)^\prime\rangle_{L_a^2(\mathbb D)}.$$
\end{thm}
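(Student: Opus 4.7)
The plan is to reduce the trace calculation to a sum of classical Bergman-space commutator traces via a filtration of $\mathcal N$ by powers of $J$. First, I identify $J$: since $J$ is a homogeneous prime ideal with $Z(J)$ a distinguished variety containing $(1,\ldots,1)$, homogeneity forces the diagonal $\Delta=\{(\lambda,\ldots,\lambda):\lambda\in\dbC\}$ to lie in $Z(J)$; since $Z(J)$ is irreducible of complex dimension one and $\Delta$ is itself a one-dimensional irreducible variety, $Z(J)=\Delta$, and consequently $J=(z_1-z_d,\ldots,z_{d-1}-z_d)$. Using the descending chain $H^2(\dbD^d)\supset[J]\supset[J^2]\supset\cdots\supset[J^N]$, I orthogonally decompose
$$\mathcal N=\bigoplus_{k=0}^{N-1}\mathcal N^{(k)},\qquad \mathcal N^{(k)}=[J^k]\ominus[J^{k+1}].$$

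Since each submodule $[J^k]$ is $M_{f_i}$-invariant, $S_{f_i}$ is block lower-triangular and $S_{f_i}^*$ is block upper-triangular with respect to this decomposition. Expanding $S_{f_1}^*S_{f_2}-S_{f_2}S_{f_1}^*$ block-by-block and using cyclicity of the trace, the off-diagonal contributions to $(S_{f_1}^*S_{f_2})_{kk}$ and $(S_{f_2}S_{f_1}^*)_{kk}$ cancel in pairs, leaving
$$\tr[S_{f_1}^*,S_{f_2}]=\sum_{k=0}^{N-1}\tr\bigl[(S_{f_1})_{kk}^*,(S_{f_2})_{kk}\bigr],$$
where $(S_{f_i})_{kk}$ denotes the compression of $S_{f_i}$ to the summand $\mathcal N^{(k)}$.

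The main step is then the analysis of each diagonal-block commutator. Setting $w_i=z_i-z_d$ for $i<d$, the quotient $J^k/J^{k+1}$ is a free $\mathbb C[z_d]$-module of rank $\binom{k+d-2}{d-2}$ with basis given by the degree-$k$ monomials in $w_1,\ldots,w_{d-1}$, and the action of any $f\in\poly$ on this module is multiplication by $r(f)\in\mathbb C[z]$. To lift this to the Hilbert space level I would use the restriction map $r:H^2(\dbD^d)\to\Hol(\dbD)$ together with its higher-order normal-derivative analogues $r_\alpha$ for $|\alpha|=k$, appropriately normalized on each graded piece $\mathcal N^{(k)}\cap H^2_n$, to construct an orthonormal model for $\mathcal N^{(k)}$ indexed by normal monomials and a grading parameter. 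Under this model $(S_{f_i})_{kk}$ is a Toeplitz-type operator with principal symbol $r(f_i)$ on $\binom{k+d-2}{d-2}$ copies of $L_a^2(\dbD)$, and its commutator trace equals $\binom{k+d-2}{d-2}\langle(rf_2)',(rf_1)'\rangle_{L_a^2(\dbD)}$ by reduction to the classical Bergman formula $\tr[M_\phi^*,M_\psi]_{L_a^2(\dbD)}=\langle\psi',\phi'\rangle_{L_a^2(\dbD)}$. Summing over $k$ via the hockey-stick identity $\sum_{k=0}^{N-1}\binom{k+d-2}{d-2}=\binom{d+N-2}{d-1}$ yields the theorem.

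The main obstacle will be the Hilbert-space identification in the previous paragraph. The blocks $(S_{f_i})_{kk}$ are not literally unitarily equivalent to $\binom{k+d-2}{d-2}$ copies of Bergman multiplication by $r(f_i)$; they are weighted-shift-type operators whose weights approach the Bergman weights only asymptotically. One must therefore argue that the commutator trace is insensitive to the discrepancies between $(S_{f_i})_{kk}$ and the Bergman model, depending only on the limiting weights (i.e., on $r(f_i)$). A secondary technicality is the rigorous justification of the trace-class property of $[S_{f_1}^*,S_{f_2}]$ and of the off-diagonal cancellations, which should follow from summable bounds on the individual block commutators, exploiting the finite asymptotic fiber dimension $\binom{d+N-2}{d-1}$ of each graded piece of $\mathcal N$.
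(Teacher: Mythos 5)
Your architecture is essentially the paper's: decompose $\mathcal N$ into the graded pieces $\mathcal N^{(k)}=[J^k]\ominus[J^{k+1}]$ (these are exactly the spaces $H_k$ of Proposition \ref{prop:deg}), kill the off-diagonal contributions by cyclicity of the trace, identify the diagonal blocks with one-variable multiplication operators with symbol $rf_i$, and sum with the hockey-stick identity. The block reduction itself is sound, but note what it consumes: the off-diagonal blocks must be Hilbert--Schmidt so that cyclicity can be applied to the individual cross products (this is Proposition \ref{prop:shift}, extended from the coordinates to polynomial symbols through $S_{pq}=S_pS_q$ on the quotient module), and the trace-class property of $[S_{f_1}^*,S_{f_2}]$ comes from Theorem \ref{thm:std}. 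You flag these as technicalities; they are available in the paper, so this part is acceptable.

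The genuine gap is the step you yourself call the main obstacle: you never establish the value of $\tr[(S_{f_1})_{kk}^*,(S_{f_2})_{kk}]$, which is where the constant and the derivative pairing must come from. Your fallback claim -- that the blocks are only asymptotically Bergman-type weighted shifts and that the commutator trace is insensitive to the $O(1/k)$ weight discrepancies -- is precisely the analytic heart of the theorem, not a routine telescoping fact: for general polynomial symbols this insensitivity is a Helton--Howe/Berger--Shaw type statement, and the perturbations involved are only of class $\mathcal L^{(1,\infty)}$, not trace class, so one cannot just subtract the model operator term by term. The paper avoids this entirely by an exact, not asymptotic, identification: since for $f\in J$ the diagonal blocks vanish ($M_f$ maps $[J^k]$ into $[J^{k+1}]$, which is orthogonal to $H_k$; the paper performs the same reduction globally in Lemma \ref{lem:trace}), one may replace $f_i$ by $(rf_i)(z_1)$, and after splitting $H_k=\bigoplus_{g\in\mathcal B_k}(\ker r_g)^\bot$ with respect to an orthonormal basis $\mathcal B_k$ of $J_k$ (your ``normal monomials'' are not mutually orthogonal, so this choice matters, cf.\ (\ref{eq:innerproduct})), each sub-block is unitarily equivalent via $r_g$ to multiplication by $rf_i$ on the weighted Bergman space $\mathcal A_{d+2k-2}$, by Proposition \ref{prop:multiplier} and (\ref{eq:isometry}). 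The weight-independence you hoped for is then a citable result \cite{Fel}: on each $\mathcal A_{d+2k-2}$ one has $\tr[M_\phi^*,M_\psi]=\langle\psi',\phi'\rangle_{L_a^2(\mathbb D)}$ for polynomial symbols, the same answer for every weight. With these two inputs your outline closes and yields $\sum_{k=0}^{N-1}\binom{k+d-2}{d-2}\langle(rf_2)',(rf_1)'\rangle_{L_a^2(\mathbb D)}=\binom{d+N-2}{d-1}\langle(rf_2)',(rf_1)'\rangle_{L_a^2(\mathbb D)}$; without them, the proposal asserts rather than proves the central equality.
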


For the bidisk case, such a kind of trace formula was given in \cite{GWZ}. However, it will be seen that, the high dimensional case is far more complicated.

The theory of boundary representation of $C^*$-algebra developed by Arveson \cite{Arv,Arv1} plays an important role in multi-variable operator theory. Recently, Kennedy and Shalit \cite{KS} showed that for the $d$-shift module on the unit ball, the essential normality is closely connected to the boundary representation. In the last section of the present paper, we also consider the boundary representation for the distinguished homogenous quotient modules of $H^2(\mathbb D^d)$ which is related to the prime decomposition of the ideal.

The present paper is organized as follows. In section 2, we prove the essential normality if the zero variety is simple. Section 3 is devoted to general cases. In Section 4, we study the boundary representation.

\section{Restriction of $H^2(\mathbb D^d)$ to the simple homogenous distinguished variety}

~~~~~~~Let $V_\theta$ denote the simple homogenous distinguished variety of $\mathbb D^d$ through $(\theta_1,\ldots,\theta_d)$, namely
\begin{eqnarray}\label{Def:V_theta}
    V_\theta=\{(\theta_1z,\ldots,\theta_dz):z\in\mathbb{D}\}
\end{eqnarray}
for some fixed $\theta=(\theta_1,\ldots,\theta_d)\in\mathbb{T}^d$. Denote by $J_\theta=I(V_\theta)$ the prime ideal of $\poly$ with variety $V_\theta$. This section is devoted to prove $1$-essential normality of the quotient modules $\mathcal{N}_\theta=[J_\theta^N]^\bot$ of $H^2(\mathbb{D}^d)$, where $N\geq1$ is any integer. Without loss of generality, we assume $\theta=(1,\ldots,1)$ throughout this section.

As in \cite{FR2}, define the restriction map $r:H^2(\mathbb{D}^d)\to\Hol(\mathbb{D})$ such that $(rf)(z)=f(z,\ldots,z)$ for $z\in\mathbb{D}$, then $[J]=\ker r$.

Denote $\omega=e^{\frac{2\pi}{d}i}$ the $d$-th primitive root of unit, and define linear polynomials
$$w_i(z):=\sum_{j=1}^d\omega^{(i-1)(j-1)}z_j,~i=1,\ldots,d.$$
Set $w=(w_1,\ldots,w_d)$. Obviously $\langle w_i,w_j\rangle=0$ for $i\neq j$, and $||w_i||^2=d$ for $i=1,\ldots,d$. Evidently $r$ maps $w_1$ to $d\cdot z$ and $w_i$ to $0$ for $i=2,\ldots,d.$ Simple division shows that $J$ is precisely the ideal generated by $w_2,\ldots,w_d$, and $J^n$ is generated by
$$J_n:=\span\{w^\alpha:|\alpha|=n,\alpha_1=0\}.$$
Write $\partial=(\frac{\partial}{\partial z_1},\ldots,\frac{\partial}{\partial z_d})$. Differentiation by parts shows $r\partial^\alpha f=0$ for all $f\in J^n$ and $|\alpha|<n$.
By direct computation one obtain
$$r\partial^\alpha M_{w_i}=r[w_i(\partial)z^\alpha](\partial),i=2,\ldots,d,$$
and
$$r\partial^\alpha M_{w_1}=r[w_1(\partial)z^\alpha](\partial)+r(w_1)r\partial^\alpha.$$
Then for $p\in\poly$ we have
\begin{equation}\label{eq:derivitive1}
    rp(\partial)M_w^\beta=r[w^\beta(\partial)p](\partial)
\end{equation}
and
\begin{equation}\label{eq:derivitive2}
    rp(\partial)M_hM_\omega^\beta=r[h(\partial)w^\beta(\partial)p](\partial)+r(h)[w^\beta(\partial)p](\partial)
\end{equation}
when $\beta_1=0$ and $h\in\poly$ is linear.

We can check that the operator $\partial^{\alpha*}$ maps constant $1$ to $\alpha!z^\alpha$. Therefore given any polynomial $g\in\poly$, we can find a unique $p_g\in\poly$ making $p_g(\partial)^*1=g$. Set $r_g=rp_g(\partial)$. For $g=\sum_{|\gamma|=n}c_\gamma z^\gamma$ one can verify
\begin{equation}\label{eq:polynomial}
    p_g=\sum_{|\gamma|=n}\bar{c}_\gamma\frac{z^\gamma}{\gamma!}.
\end{equation}
For $f=\sum_{|\gamma|=n}d_\gamma z^\gamma$ we have
\begin{equation}
    f(\partial)p_g=\sum_{|\gamma|=n}\frac{d_\gamma\bar{c}_\gamma}{\gamma!}\partial^\gamma z^\gamma=\sum_{|\gamma|=|\alpha|}d_\gamma\bar{c}_\gamma=\langle f,g\rangle.
\end{equation}
Then by (\ref{eq:derivitive1})
\begin{equation}\label{eq:derivitive2}
    r_gM_f=rp_g(\partial)M_f=r[f(\partial)p_g](\partial)=\langle f,g\rangle r,~\forall f,g\in J_n.
\end{equation}
The following proposition generalizes proposition 1 of \cite{FR2} to the multi-variable case.
\begin{prop}\label{prop:characterization}
    Let $h\in\poly$, then $h\in J^N$ if and only if $r_{w^\alpha}h=0$ whenever $|\alpha|<N$ and $\alpha_1=0$. Consequently
    $$\mathcal{N}=\overline{\span}\{(\ker R_{w^\alpha})^\bot:|\alpha|<N,\alpha_1=0\}.$$
\end{prop}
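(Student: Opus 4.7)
I would prove the equivalence in two directions, and then deduce the displayed spanning identity as a formal Hilbert-space consequence.

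For the ``only if'' direction, suppose $h\in J^N$. Since $J^N$ is generated as an ideal by $\{w^\beta:|\beta|=N,\ \beta_1=0\}$, I may write $h=\sum_\beta g_\beta w^\beta$ with $g_\beta\in\poly$. The commutation formula $rp(\partial)M_{w^\beta}=r[w^\beta(\partial)p](\partial)$ (valid for $\beta_1=0$) gives
\[
r_{w^\alpha}(g_\beta w^\beta)=r[w^\beta(\partial)p_{w^\alpha}](\partial)g_\beta.
\]
Because $p_{w^\alpha}$ is homogeneous of degree $|\alpha|<N=|\beta|$, the order-$|\beta|$ differential operator $w^\beta(\partial)$ annihilates it, and hence $r_{w^\alpha}h=0$.

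For the ``if'' direction I would induct on $N$. The base case $N=1$ is the identity $J=\ker r$ recorded just after the definition of $r$. Assume the statement for $N$, and suppose $r_{w^\alpha}h=0$ for every $|\alpha|\le N$ with $\alpha_1=0$. The conditions with $|\alpha|<N$, together with the inductive hypothesis, give $h\in J^N$, so I may write $h=\sum_\beta g_\beta w^\beta$ with $|\beta|=N$, $\beta_1=0$. The remaining conditions, via the identity $r_gM_f=\langle f,g\rangle r$ for $f,g\in J_n$ (applied to $f=w^\beta$, $g=w^\alpha$), read
\[
0=r_{w^\alpha}h=\sum_\beta \langle w^\beta,w^\alpha\rangle\,r(g_\beta),\qquad |\alpha|=N,\ \alpha_1=0.
\]
The coefficient matrix $A=(\langle w^\beta,w^\alpha\rangle)_{\alpha,\beta}$ is the Gram matrix of $\{w^\beta:|\beta|=N,\beta_1=0\}$; since this family is contained in the basis $\{w^\gamma:|\gamma|=N\}$ of the homogeneous degree-$N$ component of $\poly$, it is linearly independent, so $A$ is positive definite and in particular invertible. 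Hence $r(g_\beta)=0$ for each $\beta$, i.e.\ $g_\beta\in\ker r=J$, and therefore $h\in J\cdot J^N=J^{N+1}$, closing the induction.

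The displayed spanning identity for $\mathcal N$ then follows by taking closures: the polynomial-level equality $J^N=\bigcap_\alpha\ker r_{w^\alpha}$ extends to $\mathcal{M}=[J^N]=\bigcap_\alpha\ker R_{w^\alpha}$ in $H^2(\mathbb{D}^d)$, and the standard duality $(\bigcap_\alpha S_\alpha)^{\bot}=\overline{\mathrm{span}}\bigcup_\alpha S_\alpha^{\bot}$ yields $\mathcal{N}=\mathcal{M}^\bot=\overline{\mathrm{span}}\{(\ker R_{w^\alpha})^\bot\}$. The main conceptual point — and what I expect will require the sharpest statement — is recognising that in the inductive step the relations assemble, through the $\langle f,g\rangle$-identity, into a square linear system whose coefficient matrix is the Gram matrix of a linearly independent family and is therefore automatically invertible; this bypasses any need to analyse the non-uniqueness (syzygies) of the representation $h=\sum g_\beta w^\beta$.
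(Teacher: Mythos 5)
Your proposal is correct and follows essentially the same route as the paper's proof: the ``only if'' direction via the commutation formula and a degree count, the ``if'' direction by induction on $N$ using $r_{w^\alpha}M_{w^\beta}=\langle w^\beta,w^\alpha\rangle r$ together with invertibility of the Gram matrix of the linearly independent family $\{w^\beta:|\beta|=N,\ \beta_1=0\}$, and the spanning identity by orthocomplement duality. The one step you gloss — that the polynomial-level equality upgrades to $[J^N]=\bigcap_\alpha\ker R_{w^\alpha}$ in $H^2(\mathbb{D}^d)$ (closedness of the kernels plus the homogeneous expansion of an $H^2$ function) — is likewise left implicit in the paper, so this is not a substantive deviation.
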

\begin{proof}
    By former discussion we have $r_{w^\alpha}[J^N]=0$ for $|\alpha|<N$. Conversely assume $h\in\poly$ and $r_{w^\alpha}h=0$ whenever $|\alpha|<N$ and $\alpha_1=0$, and we shall prove $h\in J^N$ by induction. Suppose $N=1$, since $rh=0$ then we have $h\in[w_2,\ldots,w_d]=J$, and the proposition is proved in this case. Assume the proposition is proved for $N=1,\ldots,n$, and suppose $r_{w^\alpha}h=0$ whenever $|\alpha|<n+1$ and $\alpha_1=0$. By the assumption for induction, $h\in J^n$. Denote $\mathcal{J}_n=\{\alpha\in\mathbb{Z}_+^d:|\alpha|=n\mbox{ and }\alpha_1=0\}$, and then we can write $h=\sum_{\beta\in\mathcal{J}_n}w^\beta h_\beta$ where each $h_\beta$ is a polynomial. For $\alpha\in\mathcal{J}_n$ we have
    \begin{eqnarray}\label{eq:independence}
        r_{w^\alpha}h&=&\sum_{\beta\in\mathcal{J}_n}r_{w^\alpha}M_w^\beta h_\beta\\
        &=&\sum_{\beta\in\mathcal{J}_n}\langle w^\beta,w^\alpha\rangle rh_\beta\notag\\
        &=&0,\notag
    \end{eqnarray}
    where the second equality is deduced by (\ref{eq:derivitive2}).
    Since each homogeneous polynomial of degree $n$ is a linear combination of $\{w^\alpha:|\alpha|=n\}$, the later forms a linearly independent system. This implies the invertibility of the matrix $(\langle w^\beta,w^\alpha\rangle)_{\alpha,\beta\in\mathcal{J}_n}$, and then each $rh_\beta$ should be zero by (\ref{eq:independence}). Therefore $h_\beta\in J$ and $h\in J^{n+1}$. By induction, the proposition holds for all natural numbers $N$.
\end{proof}
\begin{rem}
    By linearity, proposition \ref{prop:characterization} actually states that $h\in J^n$ if and only if $r_gh=0$ for every $g\in J_0\cup J_1\ldots\cup J_{n-1}$.
\end{rem}
Similar to \cite{FR2} for $\lambda\in\mathbb{D}$ we write
$$K_\lambda^\otimes(z)=\prod_{i=1}^dK_\lambda(z_i),z\in\mathbb{D}^d$$
and
$$k_\lambda^\otimes(z)=\prod_{i=1}^dk_\lambda(z_i),z\in\mathbb{D}^d,$$
where $k_\lambda=\frac{K_\lambda}{||K_\lambda||}$ is the normalized reproducing kernel for $H^2(\mathbb{D})$.

For $f\in H^2(\mathbb{D}^d)$ and $g\in\poly$, we have
\begin{eqnarray*}
    r_gf(\lambda)&=&r[p_g(\partial)f](\lambda)\\
    &=&p_g(\partial)f(\lambda,\ldots,\lambda)\\
    &=&\langle p_g(\partial)f,K_\lambda^\otimes\rangle\\
    &=&\langle f,p_g(\partial)^*K_\lambda^\otimes\rangle
\end{eqnarray*}
Therefore
\begin{equation}\label{eq:kernel1}
    (\ker r_g)^\bot=\overline{\span}\{p_g(\partial)^*K_\lambda^\otimes:\lambda\in\mathbb{D}\}.
\end{equation}
By equality (\ref{eq:polynomial}), for $w^\alpha=\sum_{|\gamma|=n}c_\gamma z^\gamma\in J_n$ we have
\begin{eqnarray}\label{eq:kernel2}
    p_{w^\alpha}(\partial)^*K_\lambda^\otimes(z)&=&\sum_{|\gamma|=n}c_\gamma\frac{1}{\gamma!}\partial^{\gamma*}K_\lambda^\otimes(z)\\
    &=&\sum_{|\gamma|=n}\left(c_\gamma\prod_{i=1}^d z^{\gamma_i}K_\lambda(z_i)^{\gamma_i+1}\right)\notag\\
    &=&w^\alpha(\psi_\lambda(z))K_\lambda^\otimes(z),\notag
\end{eqnarray}
where $\psi_\lambda(z):=\left(\frac{z_1}{1-\bar{\lambda}z_1},\ldots,\frac{z_d}{1-\bar{\lambda}z_d}\right),~\lambda\in\mathbb{D}$. Obviously
$$r(w_i\circ\psi_\lambda)(\mu)=0,~\forall\mu\in\mathbb{D},i=2,\ldots,d.$$
Therefore $w_i\circ\psi_\lambda\in [J]$ and $w^\alpha\circ\psi_\lambda\in[J^n]$. By linearity, $g\circ\psi_\lambda\in[J^n]$ whenever $g\in J_n$. Combining this fact with (\ref{eq:kernel1}) and (\ref{eq:kernel2}) we obtain the following lemma.
\begin{lem}\label{lem:coker}
    $(\ker r_g)^\bot=\overline{\span}\{g\circ\psi_\lambda\cdot K_\lambda^\otimes:\lambda\in\mathbb{D}\}\subset[J^n]$ for $g\in J_n$.
\end{lem}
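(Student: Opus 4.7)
\emph{Plan.} Decompose the statement into two parts: (i) the identification $(\ker r_g)^\bot=\overline{\span}\{g\circ\psi_\lambda\cdot K_\lambda^\otimes:\lambda\in\mathbb{D}\}$, and (ii) the containment of this span in $[J^n]$. Part (i) is essentially a linear extension of the computations already recorded, while (ii) is the substantive step and contains the only real subtlety.

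For part (i), equation (\ref{eq:kernel1}) already gives $(\ker r_g)^\bot=\overline{\span}\{p_g(\partial)^*K_\lambda^\otimes:\lambda\in\mathbb{D}\}$, and (\ref{eq:kernel2}) computes $p_{w^\alpha}(\partial)^*K_\lambda^\otimes=w^\alpha(\psi_\lambda(\cdot))\,K_\lambda^\otimes$ in the monomial case $g=w^\alpha$. Since formula (\ref{eq:polynomial}) makes $g\mapsto p_g$ antilinear, the assignment $g\mapsto p_g(\partial)^*$ is $\mathbb{C}$-linear, so expanding $g=\sum c_\alpha w^\alpha\in J_n$ yields
$$p_g(\partial)^*K_\lambda^\otimes=\sum c_\alpha\, w^\alpha(\psi_\lambda(\cdot))K_\lambda^\otimes=g(\psi_\lambda(\cdot))K_\lambda^\otimes$$
at once, establishing the equality.

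For part (ii), I would first check $w_i\circ\psi_\lambda\in[J]$ for every $i=2,\dots,d$ and $\lambda\in\mathbb{D}$: direct evaluation on the diagonal gives
$$r(w_i\circ\psi_\lambda)(\mu)=\frac{\mu}{1-\bar\lambda\mu}\sum_{j=1}^d\omega^{(i-1)(j-1)}=0,$$
because the sum of all $d$-th roots of unity vanishes when $i\neq 1$; combined with $[J]=\ker r$ this settles the single-factor case. Moreover $w_i\circ\psi_\lambda\in H^\infty(\mathbb{D}^d)$ since $|\lambda|<1$.

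The crux is then to promote this to the $n$-fold product $w^\alpha\circ\psi_\lambda=\prod_{i\geq 2}(w_i\circ\psi_\lambda)^{\alpha_i}\in[J^n]$ for $|\alpha|=n$. The main obstacle is that pointwise multiplication is discontinuous on $H^2\times H^2$, so one cannot simply take limits of products of polynomial approximants. The way around it is the auxiliary lemma that $f_1,\dots,f_n\in[J]\cap H^\infty(\mathbb{D}^d)$ implies $f_1\cdots f_n\in[J^n]$. For $n=2$: approximate $f_1$ in $H^2$ by polynomials $p_k\in J$; for any polynomial $q\in J$, $p_k q\in J^2$ converges in $H^2$ to $f_1 q$ because $M_q$ is bounded on $H^2$, so $f_1 q\in[J^2]$; then approximate $f_2$ in $H^2$ by polynomials $q_k\in J$, and $f_1 q_k\in[J^2]$ converges in $H^2$ to $f_1f_2$ because $M_{f_1}$ is bounded, giving $f_1 f_2\in[J^2]$. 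Iterate for larger $n$. Finally, $K_\lambda^\otimes\in H^\infty(\mathbb{D}^d)$ preserves the closed submodule $[J^n]$, and a last appeal to linearity in $g\in J_n$ finishes the proof.
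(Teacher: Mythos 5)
Your proposal is correct and takes essentially the same route as the paper: the identification of $(\ker r_g)^\bot$ via (\ref{eq:kernel1}), the kernel formula (\ref{eq:kernel2}) extended by linearity in $g$, and the containment deduced from $r(w_i\circ\psi_\lambda)=0$ together with $[J]=\ker r$. The only difference is that you explicitly justify the passage from $w_i\circ\psi_\lambda\in[J]$ to $w^\alpha\circ\psi_\lambda\cdot K_\lambda^\otimes\in[J^n]$ by a bounded-approximation argument for products of $H^\infty$ functions, a step the paper asserts without comment; this is a useful filled-in detail rather than a different approach.
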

Denote $H_n=\span\{(\ker r_g)^\bot :g\in J_n\}$, then by proposition \ref{prop:characterization} and its remark
$$\mathcal{N}=\overline{\span}\{H_n:n=0,1,\ldots,N-1\}.$$
Moreover, we have the following proposition, which gives the precise structure of $\cal N$.
\begin{prop}\label{prop:deg}
    $H_m\bot H_n$ whenever $m\neq n$. As a consequence, $\mathcal{N}=\bigoplus_{n=0}^{N-1}H_n$.
\end{prop}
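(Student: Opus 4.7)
The plan is to exploit the chain $H_m\subset[J^m]$ provided by Lemma \ref{lem:coker} together with the vanishing characterization of $[J^N]$ from Proposition \ref{prop:characterization}. The key point is that elements of $H_m$ with $m>n$ lie in $[J^{n+1}]$, and hence are annihilated by every $r_g$ with $g\in J_n$, which translates into orthogonality against $H_n$ via the reproducing-kernel identity.

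First, by Lemma \ref{lem:coker}, for any $g\in J_n$ the subspace $(\ker r_g)^\bot$ sits inside $[J^n]$, so $H_n\subset[J^n]$. Since $J^m\subset J^{n+1}$ whenever $m\ge n+1$, taking closures gives $H_m\subset [J^{n+1}]$ for every $m>n$. Next, for $g\in J_n$ and $\mu\in\mathbb{D}$, the computation leading to (\ref{eq:kernel1})--(\ref{eq:kernel2}) actually yields the pointwise identity
$$r_g f(\mu)=\langle f,\,g\circ\psi_\mu\cdot K_\mu^\otimes\rangle,\qquad f\in H^2(\mathbb{D}^d),$$
so the evaluation functional $f\mapsto r_g f(\mu)$ is bounded on $H^2(\mathbb{D}^d)$. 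Proposition \ref{prop:characterization} applied to polynomials, combined with a density argument using this continuity, yields $[J^{n+1}]\subset\bigcap_{g\in J_n}\ker r_g$.

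Combining the two inclusions: for $m>n$ and any $f_1\in H_m$, one has $r_g f_1\equiv0$ for every $g\in J_n$; that is, $f_1\in\ker r_g$, i.e.\ $f_1\perp(\ker r_g)^\bot$ for each such $g$. Taking the sum over $g\in J_n$ gives $f_1\perp H_n$. Thus $H_m\perp H_n$ whenever $m>n$, and the case $m<n$ follows by swapping the roles. For the ``consequence'' part, recall that the discussion preceding the proposition established $\mathcal{N}=\overline{\span}\{H_n:0\le n\le N-1\}$; since the $H_n$ are now pairwise orthogonal and there are only finitely many of them, their algebraic sum is already closed and equals $\bigoplus_{n=0}^{N-1}H_n$.

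The argument is largely a clean assembly of the tools built up in the section, so I do not expect a substantial obstacle. The only subtle point worth spelling out carefully is the passage from the polynomial-level characterization in Proposition \ref{prop:characterization} to its closure $[J^N]$, which requires recording that each $r_g$ is pointwise bounded on $H^2(\mathbb{D}^d)$ via the reproducing-kernel identity above — everything else is bookkeeping.
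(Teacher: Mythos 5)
Your argument is correct and follows essentially the same route as the paper: combine Lemma \ref{lem:coker} (so $H_m\subset[J^m]\subset[J^{n+1}]$ for $m>n$) with the fact underlying Proposition \ref{prop:characterization} that every $r_g$, $g\in J_n$, annihilates $[J^{n+1}]$, whence $H_m\perp(\ker r_g)^\perp$ and so $H_m\perp H_n$. The paper compresses this into one line via $[J^n]^\perp=\overline{\span}\{H_k:0\le k\le n-1\}$; your additional remark on the boundedness of $f\mapsto r_gf(\mu)$ merely makes the passage from $J^{n+1}$ to its closure explicit.
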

\begin{proof}
    By the previous lemma $H_n\subset[J^n]$, and since $[J^n]^\bot=\overline{\span}\{H_m:0\leq m\leq n-1\}$ for each $n$, we have $H_n\bot H_m$ provided $m<n$.
\end{proof}
For $\lambda\in\mathbb{D}$, denote the Mobius transform on $\mathbb{D}^d$ by $$\varphi_\lambda:z\mapsto\left(\frac{\lambda-z_1}{1-\bar{\lambda}z_1},\ldots,\frac{\lambda-z_d}{1-\bar{\lambda}z_d}\right),z\in\mathbb{D}^d.$$
It is well-known that the linear map $V:f\mapsto f\circ\varphi_\lambda\cdot k_\lambda^\otimes$ is a unitary on $H^2(\mathbb{D}^d)$, such that $V^2=\Id$.

Suppose $w^\alpha,w^\beta\in J_n,\lambda\in\mathbb{D}$, then by (\ref{eq:kernel2}) we have
\begin{eqnarray*}
    \langle p_{w^\alpha}(\partial)^*k_\lambda^\otimes,p_{w^\beta}(\partial)^*k_\lambda^\otimes\rangle&=&\langle w^\alpha\circ\psi_\lambda\cdot k_\lambda^\otimes,w^\beta\circ\psi_\lambda\cdot k_\lambda^\otimes\rangle\\
    &=&\langle w^\alpha\circ\psi_\lambda\circ\varphi_\lambda,w^\beta\circ\psi_\lambda\circ\varphi_\lambda\rangle\\
    &=&\left\langle w^\alpha\left(\frac{\lambda-z_1}{1-|\lambda|^2},\ldots,\frac{\lambda-z_d}{1-|\lambda|^2}\right),w^\beta\left(\frac{\lambda-z_1}{1-|\lambda|^2},\ldots,\frac{\lambda-z_d}{1-|\lambda|^2}\right)\right\rangle\\
    &=&(1-|\lambda|^2)^{-2n}\langle w^\alpha(\lambda-z_1,\ldots,\lambda-z_d),w^\beta(\lambda-z_1,\ldots,\lambda-z_d)\rangle\\
    &=&(1-|\lambda|^2)^{-2n}\langle w^\alpha(-z_1,\ldots,-z_d),w^\beta(-z_1,\ldots,-z_d)\rangle\\
    &=&(1-|\lambda|^2)^{-2n}\langle w^\alpha,w^\beta\rangle.
\end{eqnarray*}
This induces
$$\langle p_f(\partial)^*K_\lambda^\otimes,p_g(\partial)^*K_\lambda^\otimes\rangle=(1-|\lambda|^2)^{-d-2n}\langle f,g\rangle,~\forall\lambda\in\mathbb{D}$$
whenever $f,g\in J_n$. Notice that the function $F(\lambda,\mu)=\langle p_f(\partial)^*K_\lambda^\otimes,p_g(\partial)^*K_\mu^\otimes\rangle$ is analytic on $\mu$ and co-analytic on $\lambda$.  By Proposition 1 of \cite{Eng},
\begin{equation}\label{eq:innerproduct}
    \langle p_f(\partial)^*K_\lambda^\otimes,p_g(\partial)^*K_\mu^\otimes\rangle=(1-\langle\lambda,\mu\rangle)^{-d-2n}\langle f,g\rangle,~\forall\lambda,\mu\in\mathbb{D}.
\end{equation}

Next, let $\cA_{d+2n-2}$ denote the weighted Bergman space on the unit disc with the reproducing kernel $K_{\lambda}^{(d+2n-2)}$.
For homogeneous $g\in J_n$ of $||g||=1$, by (\ref{eq:innerproduct}) we can define an isometry $R_g:(\ker r_g)^\bot\to\mathcal{A}_{d+2n-2}$, satisfying $R_g(p_g(\partial)^*K_\lambda^\otimes)=K_\lambda^{(d+2n-2)}$. Then it holds for $\lambda,\mu\in\mathbb{D}$ that
\begin{eqnarray*}
    r_g[p_g(\partial)^*K_\lambda^\otimes](\mu)&=&\langle p_g(\partial)p_g(\partial)^*K_\lambda^\otimes,K_\mu^\otimes\rangle\\
    &=&\langle p_g(\partial)^*K_\lambda^\otimes,p_g(\partial)^*K_\mu^\otimes\rangle\\
    &=&(1-\bar{\lambda}\mu)^{-d-2n}~~~~~~~~~~~~~~~(\text{by }\ref{eq:innerproduct})\\
    &=&\langle K_\lambda^{(d+2n-2)},K_\mu^{(d+2n-2)}\rangle\\
    &=&R_g[p_g(\partial)^*K_\lambda^\otimes](\mu),
\end{eqnarray*}
which implies that $R_g$ is actually the restriction of $r_g$ on $(\ker r_g)^\bot$. Namely we have
\begin{equation}\label{eq:isometry}
    r_g[p_g(\partial)^*K_\lambda^\otimes]=K_\lambda^{(d+2n-2)},~\forall~\lambda\in\mathbb{D},
\end{equation}
and that $r_g$ is an isometry on $(\ker r_g)^\bot$.

For each subspace $J_n\subset H^2(\mathbb{D}^d)$ we choose an orthonormal basis $\mathcal{B}_n=\{f_j^{(n)}:j=1,\ldots,\binom{d+n-2}{d-2}\}$, and denote $\mathcal{B}:=\bigcup_{n=0}^{N-1}\mathcal{B}_n=\{g_j:j=1,\ldots,\binom{N+d-2}{d-1}\}$. By (\ref{eq:innerproduct}), $(\ker r_{f_i^{(n)}})^\bot$ is orthogonal to $(\ker r_{f_j^{(n)}})^\bot$ whenever $i\neq j$, and therefore $H_n=\bigoplus_{g\in\mathcal{B}_n}(\ker r_g)^\bot$. Then by proposition \ref{prop:deg} we have $\mathcal{N}=\bigoplus_{g\in\mathcal{B}}(\ker r_g)^\bot$. Define a linear mapping $U:\mathcal{N}\to\bigoplus_{n=0}^{N-1}\bigoplus_{g\in\mathcal{B}_n}\mathcal{A}_{d+2n-2}$ by
\begin{equation}\label{eq:U}
    Uh=(r_gh)_{g\in\mathcal{B}}.
\end{equation}
Since each $r_g$ is an isometry on $(\ker r_g)^\bot$, $U$ is a unitary.
\begin{prop}\label{prop:multiplier}
    For $g\in\mathcal{B}_n$ and $f\in\poly$, it holds that
    $$r_g(fh)=r(f)r_g(h),~\forall h\in(\ker r_g)^\bot.$$
\end{prop}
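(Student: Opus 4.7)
The plan is to expand $p_g(\partial)(fh)$ via the multivariate Leibniz rule and exploit the vanishing of low-order diagonal derivatives of $h$. By Lemma \ref{lem:coker}, any $h\in(\ker r_g)^\bot$ lies in $[J^n]$, so the first step is to establish the auxiliary vanishing
\[
r\,\partial^\gamma h = 0 \qquad \text{for every } h\in[J^n] \text{ and every multi-index }\gamma \text{ with } |\gamma|<n.
\]

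To prove this auxiliary fact, I reduce by $\poly$-linearity to $h=q\,w^\alpha$ with $q\in\poly$ and $w^\alpha\in J_n$ (so $|\alpha|=n$, $\alpha_1=0$). Since each partial $\partial_iw_j=\omega^{(i-1)(j-1)}$ is a constant, iterated Leibniz shows that $\partial^{\gamma''}w^\alpha$ for any $|\gamma''|<n$ is a linear combination of monomials $w^\beta$ with $|\beta|=n-|\gamma''|>0$ and $\beta_1=0$. Every such $w^\beta$ contains a factor $w_k$ with $k\geq 2$, and $rw_k=z\sum_{j=1}^d\omega^{(k-1)(j-1)}=0$. Expanding $\partial^\gamma(qw^\alpha)$ by Leibniz and using multiplicativity of $r$ on pointwise products then gives $r\,\partial^\gamma(qw^\alpha)=0$. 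Passing to the $H^2$-closure $[J^n]$ via continuity of holomorphic differentiation on the interior of $\mathbb{D}^d$ completes the claim.

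Granting the auxiliary vanishing, the standard Leibniz rule for the constant-coefficient operator $p_g(\partial)$ gives
\[
p_g(\partial)(fh)=\sum_{|\beta|\leq n}\frac{\partial^\beta f}{\beta!}\,(\partial^\beta p_g)(\partial)h,
\]
and restricting to the diagonal (where $r$ is multiplicative) produces
\[
r_g(fh)=\sum_{|\beta|\leq n}\frac{r(\partial^\beta f)}{\beta!}\,r(\partial^\beta p_g)(\partial)h.
\]
For every $\beta$ with $|\beta|>0$, $(\partial^\beta p_g)(\partial)$ is a differential operator of order $n-|\beta|<n$, so its action on $h$ is a linear combination of $\partial^\gamma h$ with $|\gamma|<n$, each of which vanishes on the diagonal by the auxiliary claim. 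Only the $\beta=0$ summand survives, yielding $r(f)\,rp_g(\partial)h=r(f)\,r_g(h)$, as required. The main obstacle is establishing the auxiliary vanishing lemma above; the subsequent Leibniz cancellation is then purely formal.
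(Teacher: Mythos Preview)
Your proof is correct and takes a genuinely different route from the paper's.

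The paper proves the identity by working with reproducing kernels and the M\"obius unitary $V:f\mapsto f\circ\varphi_\lambda\cdot k_\lambda^\otimes$. It computes $\langle g\circ\psi_\lambda\cdot f k_\lambda^\otimes,\,g\circ\psi_\lambda\cdot k_\lambda^\otimes\rangle$ by pulling back through $\varphi_\lambda$, obtains $(1-|\lambda|^2)^{-2n}\,r(f)(\lambda)$, and then invokes Engli\v{s}'s polarization result (Proposition~1 of \cite{Eng}) to pass from the diagonal $\lambda=\mu$ to arbitrary $\lambda,\mu\in\mathbb{D}$, concluding via the density of $\{r_g^*K_\lambda^{(d+2n-2)}:\lambda\in\mathbb{D}\}$ in $(\ker r_g)^\bot$.

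Your argument is more elementary and purely differential-algebraic: you use Lemma~\ref{lem:coker} to place $h$ in $[J^n]$, invoke the vanishing $r\partial^\gamma h=0$ for $|\gamma|<n$ (already noted in the paper for $h\in J^n$; your continuity extension to $[J^n]$ is routine), and then kill all nontrivial Leibniz terms in $p_g(\partial)(fh)$ directly. This avoids both the M\"obius computation and the appeal to \cite{Eng}, and it makes transparent exactly which structural fact drives the multiplier property---namely that $(\ker r_g)^\bot\subset[J^n]$ forces the lower-order diagonal derivatives of $h$ to vanish. The paper's approach, by contrast, fits into its broader kernel-and-isometry framework and reuses machinery already set up for the identification with $\mathcal{A}_{d+2n-2}$.
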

\begin{proof}
    For $\lambda\in\mathbb{D}$ we have
    \begin{eqnarray*}
        \langle g\circ\psi_\lambda\cdot fk_\lambda^\otimes,g\circ\psi_\lambda\cdot k_\lambda^\otimes\rangle&=&\langle f\circ\varphi_\lambda\cdot g\circ\psi_\lambda\circ\varphi_\lambda,g\circ\psi_\lambda\circ\varphi_\lambda\rangle\\
        &=&\left\langle f\circ\varphi_\lambda\cdot g\left(\frac{\lambda-z_1}{1-|\lambda|^2},\ldots,\frac{\lambda-z_d}{1-|\lambda|^2}\right),g\left(\frac{\lambda-z_1}{1-|\lambda|^2},\ldots,\frac{\lambda-z_d}{1-|\lambda|^2}\right)\right\rangle\\
        &=&(1-|\lambda|^2)^{-2n}\langle f\circ\varphi_\lambda\cdot g(\lambda-z_1,\ldots,\lambda-z_d),g(\lambda-z_1,\ldots,\lambda-z_d)\rangle\\
        &=&(1-|\lambda|^2)^{-2n}\langle f\circ\varphi_\lambda\cdot g(-z_1,\ldots,-z_d),g(-z_1,\ldots,-z_d)\rangle\\
        &=&(1-|\lambda|^2)^{-2n}\langle f\circ\varphi_\lambda\cdot g,g\rangle\\
        &=&(1-|\lambda|^2)^{-2n}\langle f\circ\varphi_\lambda(0)g,g\rangle\\
        &=&(1-|\lambda|^2)^{-2n}r(f)(\lambda),
    \end{eqnarray*}
    and then by equality (\ref{eq:isometry})
    \begin{eqnarray*}
        \langle r_g(fr_g^*K_\lambda^{(d+2n-2)}),K_\lambda^{(d+2n-2)}\rangle&=&\langle r_g(g\circ\psi_\lambda\cdot fK_\lambda^\otimes),r_g(g\circ\psi_\lambda\cdot K_\lambda^\otimes)\rangle\\
        &=&\langle g\circ\psi_\lambda\cdot fK_\lambda^\otimes,g\circ\psi_\lambda\cdot K_\lambda^\otimes\rangle\\
        &=&(1-|\lambda|^2)^{-d-2n}r(f)(\lambda)\\
        &=&\langle r(f)K_\lambda^{(d+2n-2)},K_\lambda^{(d+2n-2)}\rangle.
    \end{eqnarray*}
    Again by Proposition 1 in \cite{Eng},
    $$\langle r_g(fr_g^*K_\lambda^{(d+2n-2)}),K_\mu^{(d+2n-2)}\rangle=\langle r(f)K_\lambda^{(d+2n-2)},K_\mu^{(d+2n-2)}\rangle$$
    whenever $\lambda,\mu\in\mathbb{D}$, and hence
    $$r_g(fr_g^*K_\lambda^{(d+2n-2)})=r(f)r_gr_g^*K_\lambda^{(d+2n-2)},~\forall~\lambda\in\mathbb{D}.$$
    Since $\{r_g^*K_\lambda^{(d+2n-2)}:\lambda\in\mathbb{D}\}$ is dense in $(\ker r_g)^\bot$, the proposition is proved.
\end{proof}
Denote by $P_g(g\in\mathcal{B})$ the orthogonal projection to $(\ker r_g)^\bot$, and $A_{i}^{(f,g)}:=P_fS_{z_i}P_g$. Then we have $S_{z_i}=\sum_{f,g\in\mathcal{B}}A_{i}^{(f,g)}$. By proposition \ref{prop:multiplier}, $A_{i}^{(g,g)}$ is unitarily equivalent to $M_z$ acting on $\mathcal{A}_{d+2n-2}$, which is $1$-essentially normal with $\tr[M_z^*,M_z]=1$. Therefore $[A_{i}^{(g,g)*},A_{i}^{(g,g)}]$ belongs to the trace class and is of trace $1$. Next proposition concerns the compactness of $A_{i}^{(f,g)}$ where $f\neq g$.
\begin{prop}\label{prop:shift}
    Given $f\in\mathcal{B}_m$ and $g\in\mathcal{B}_n$, then
    \begin{itemize}
        \item[(a)]$A_{i}^{(f,g)}=0$ if $m\leq n$ and $f\neq g$;
        \item[(b)]$A_{i}^{(f,g)}\in\mathcal{L}^2$ if $m>n$.
    \end{itemize}
\end{prop}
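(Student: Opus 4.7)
The plan is to leverage a single commutator identity for $p_f(\partial) M_{z_i}$. Using $[\partial^\alpha, M_{z_i}] = \alpha_i \partial^{\alpha - e_i}$, one verifies $[p_f(\partial), M_{z_i}] = p_{\tilde{f}_i}(\partial)$ for a uniquely determined homogeneous polynomial $\tilde{f}_i$ of degree at most $m - 1$. Composing with the diagonal restriction $r$ yields the master identity
\[
r_f(M_{z_i} h) = M_z(r_f h) + r_{\tilde{f}_i} h, \qquad h \in H^2(\mathbb{D}^d).
\]
Since $A_i^{(f,g)} h = P_f(M_{z_i} h)$ for $h \in (\ker r_g)^\bot$, and $P_f x = 0$ is equivalent to $r_f x = 0$, both parts reduce to analyzing the two summands on the right.

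For part (a), with $m \leq n$ and $f \neq g$, take $h \in (\ker r_g)^\bot \subset H_n \subset [J^n]$. The differential operator $p_{\tilde{f}_i}(\partial)$ has order at most $m - 1 < n$, so the observation preceding (2.1), extended to $[J^n]$ by continuity of $h \mapsto \langle h, p_{\tilde{f}_i}(\partial)^* K_\lambda^\otimes\rangle$, gives $r_{\tilde{f}_i} h = 0$. For the other term: when $m < n$, the same observation yields $r_f h = 0$; when $m = n$ with $f \neq g$, the orthogonality $(\ker r_f)^\bot \bot (\ker r_g)^\bot$ for distinct basis elements of $J_n$ (noted just after (2.7)) places $h$ in $\ker r_f$. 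In either case $r_f(M_{z_i} h) = 0$, so $A_i^{(f,g)} = 0$.

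For part (b), with $m > n$, Proposition~\ref{prop:deg} gives $H_n \bot H_m$, so the orthogonality argument still forces $r_f h = 0$ on $(\ker r_g)^\bot$, and the master identity collapses to $r_f A_i^{(f,g)} h = r_{\tilde{f}_i} h$. I then compute matrix entries in the ONBs $u_k^{(n,g)}$ of $(\ker r_g)^\bot$ and $u_j^{(m,f)}$ of $(\ker r_f)^\bot$ obtained by pulling back the monomial ONBs of $\mathcal{A}_{d+2n-2}$ and $\mathcal{A}_{d+2m-2}$ through $r_g^*$ and $r_f^*$. Expanding $g \circ \psi_\lambda K_\lambda^\otimes$ in powers of $\bar\lambda$ via (\ref{eq:kernel2}) identifies $u_k^{(n,g)}$ as a scalar multiple of the homogeneous polynomial $\sum_{|\beta|=n+k} a^{(g)}_\beta z^\beta$, where $a^{(g)}_\beta = \sum_{|\alpha|=n} c^{(g)}_\alpha \binom{\beta}{\alpha}$ for $g = \sum c^{(g)}_\alpha z^\alpha$. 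A degree count forces $\langle z_i u_k^{(n,g)}, u_j^{(m,f)}\rangle = 0$ unless $j = k - (m-n-1)$, in which case the inner product is a normalization factor times $S_k := \sum_{|\beta|=n+k} a^{(g)}_\beta \overline{a^{(f)}_{\beta+e_i}}$.

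The heart of the proof is a cancellation coming from Proposition~\ref{prop:deg}: at every total degree $K$ the unique homogeneous degree-$K$ element $u_{K-n}^{(n,g)} \in H_n$ is orthogonal in $H^2(\mathbb{D}^d)$ to $u_{K-m}^{(m,f)} \in H_m$, which translates to
\[
\sum_{|\beta|=K} a^{(g)}_\beta \overline{a^{(f)}_\beta} = 0 \qquad \text{for every } K.
\]
Reindexing $\gamma = \beta + e_i$ and subtracting this vanishing sum rewrites $S_k = \sum_{|\gamma|=n+k+1}\bigl(a^{(g)}_{\gamma-e_i}-a^{(g)}_\gamma\bigr)\overline{a^{(f)}_\gamma}$. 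Since $a^{(g)}_\beta$ is polynomial in $\beta$ of degree $n$, its first difference is polynomial of degree $n - 1$, which saves one factor of $k$ compared with the trivial bound. A Cauchy--Schwarz estimate using the norm identity $\sum_{|\beta|=n+k} |a^{(g)}_\beta|^2 = \binom{k+d+2n-1}{k}$ (and its analog for $f$) then gives $|S_k|^2\bigl/\bigl[\binom{k+d+2n-1}{k}\binom{k-\ell+d+2m-1}{k-\ell}\bigr] = O(k^{-2})$, summable in $k$, so $A_i^{(f,g)} \in \mathcal{L}^2$. The main obstacle is this cancellation step: the naive degree count alone makes each matrix entry $O(1)$, yielding a divergent series, and only the orthogonality $H_n \bot H_m$ from Proposition~\ref{prop:deg} supplies the extra power of $k$ needed for summability.
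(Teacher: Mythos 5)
Your proof is correct, but it takes a genuinely different route from the paper's, so a comparison is worth recording. For part (a) the paper never writes a commutator identity for $p_f(\partial)$: it conjugates by the M\"obius unitary $h\mapsto h\circ\varphi_\lambda\cdot k_\lambda^\otimes$ to get $\langle z_i\,g\circ\psi_\lambda\,k_\lambda^\otimes, f\circ\psi_\lambda\,k_\lambda^\otimes\rangle=(1-|\lambda|^2)^{-n-m}\langle\tfrac{\lambda-z_i}{1-\bar\lambda z_i}g,f\rangle$, notes $\langle z_i^kg,f\rangle=0$ for all $k\geq0$, and polarizes with Engli\v{s}'s proposition; your identity $r_fM_{z_i}=M_zr_f+r_{\tilde f_i}$ together with the order count $\deg\tilde f_i\leq m-1<n$ reaches the same conclusion more algebraically, and is fine. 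For part (b) the divergence is larger: the paper pushes the same M\"obius computation further to the closed form \eqref{eq:afg}, so $\tilde A_i^{(f,g)}$ is exhibited as an explicit weighted shift with weights $\langle z_i^{m-n}g,f\rangle a_{m,n}(k)=O(k^{-1})$ --- a formula it reuses later (proof of Theorem \ref{thm:std} and Lemma \ref{lem:trace}); you instead read the matrix entries off the $\bar\lambda$-expansion of \eqref{eq:kernel2} and produce the $O(k^{-1})$ decay by an Abel-summation cancellation against the inter-degree orthogonality $H_n\perp H_m$ of Proposition \ref{prop:deg}. Your cancellation is a nice conceptual explanation of why the decay occurs, and the bookkeeping checks out ($|S_k|\lesssim k^{\,n+m+d-2}$ against the normalizations $k^{(d+2n-1)/2}k^{(d+2m-1)/2}$ gives entries $O(k^{-1})$, hence Hilbert--Schmidt), but it yields only the membership in $\mathcal{L}^2$, not the explicit weights the paper exploits afterwards. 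One point you should patch: after reindexing $\gamma=\beta+e_i$, the terms with $\gamma_i=0$ carry $a^{(g)}_{\gamma-e_i}:=0$, which is \emph{not} the value of the degree-$n$ polynomial at $\gamma-e_i$, so the ``first difference has degree $n-1$'' bound does not cover them; they need a separate (easy) estimate --- there are $\asymp K^{d-2}$ such lattice points, each term is $O(K^{n+m})$, so their total is $O(K^{n+m+d-2})$, the same order as the interior sum, and the final bound survives.
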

\begin{proof}
    For $\lambda\in\mathbb{D}$ we have
    \begin{eqnarray}\label{eq:changeshift}
        \langle g\circ\psi_\lambda\cdot z_ik_\lambda^\otimes,f\circ\psi_\lambda\cdot k_\lambda^\otimes\rangle&=&\left\langle\frac{\lambda-z_i}{1-\bar{\lambda}z_i}\cdot g\circ\psi_\lambda\circ\varphi_\lambda,f\circ\psi_\lambda\circ\varphi_\lambda\right\rangle\notag\\
        &=&\left\langle\frac{\lambda-z_i}{1-\bar{\lambda}z_i}g\left(\frac{\lambda-z_1}{1-|\lambda|^2},\ldots,\frac{\lambda-z_d}{1-|\lambda|^2}\right),f\left(\frac{\lambda-z_1}{1-|\lambda|^2},\ldots,\frac{\lambda-z_d}{1-|\lambda|^2}\right)\right\rangle\notag\\
        &=&(1-|\lambda|^2)^{-n-m}\left\langle\frac{\lambda-z_i}{1-\bar{\lambda}z_i}g(\lambda-z_1,\ldots,\lambda-z_d),f(\lambda-z_1,\ldots,\lambda-z_d)\right\rangle\notag\\
        &=&(1-|\lambda|^2)^{-n-m}\left\langle\frac{\lambda-z_i}{1-\bar{\lambda}z_i}g(-z_1,\ldots,-z_d),f(-z_1,\ldots,-z_d)\right\rangle\notag\\
        &=&(1-|\lambda|^2)^{-n-m}\left\langle\frac{\lambda-z_i}{1-\bar{\lambda}z_i}g,f\right\rangle.\notag\\
    \end{eqnarray}
    In the cases $m\leq n$ and $f\bot g$ and $\deg(f)\leq \deg(g)$. Notice that $\frac{\lambda-z_i}{1-\bar{\lambda}z_i}g=(\lambda-z_i)\sum\limits_{n=0}^\infty (\bar{\lambda}z_i)^n g$. Since for any $n\geq 0$, $z_i^n g\perp f$, and we have $\langle\frac{\lambda-z_i}{1-\bar{\lambda}z_i}g,f\rangle=0$, and therefore
    $$\langle z_ir_g^*k_\lambda^{(d+2n-2)},r_f^*k_\lambda^{(d+2m-2)}\rangle=0$$
    inducing
    $$\langle z_ir_g^*k_\lambda^{(d+2n-2)},r_f^*k_\mu^{(d+2m-2)}\rangle=0,~\forall\lambda,\mu\in\mathbb{D}$$
    by Proposition 1 of \cite{Eng}. This equality together with Lemma \ref{lem:coker} shows
    $$A_{i}^{(f,g)}=P_fM_{z_i}P_g=0$$
    in these cases.

    In the case $m>n$, by the expansion
    $$\frac{\lambda-z_i}{1-\bar{\lambda}z_i}=\lambda-(1-|\lambda|^2)\sum_{k=1}^\infty\bar{\lambda}^{k-1}z_i^k$$
    we find by equality (\ref{eq:changeshift})
    $$\langle z_ir_g^*k_\lambda^{(d+2n-2)},r_f^*k_\lambda^{(d+2m-2)}\rangle=-\bar{\lambda}^{m-n-1}(1-|\lambda|^2)^{1-n-m}\langle z_i^{m-n}g,f\rangle$$
    and therefore
    $$\langle z_ir_g^*K_\lambda^{(d+2n-2)},r_f^*K_\lambda^{(d+2m-2)}\rangle=-\bar{\lambda}^{m-n-1}(1-|\lambda|^2)^{1-d-n-m}\langle z_i^{m-n}g,f\rangle.$$
    An application of Proposition 1 of \cite{Eng} shows
    $$\langle z_ir_g^*K_\lambda^{(d+2n-2)},r_f^*K_\mu^{(d+2m-2)}\rangle=-\bar{\lambda}^{m-n-1}(1-\mu\bar{\lambda})^{1-d-n-m}\langle z_i^{m-n}g,f\rangle.$$
    Denote by $\tilde{A}_{i}^{(f,g)}:=r_fA_{i}^{(f,g)}r_g^*:\mathcal{A}_{d+2n-2}\to\mathcal{A}_{d+2m-2}$, then we have
    \begin{equation}\label{eq:afg}
        \tilde{A}_{i}^{(f,g)}K_\lambda^{(d+2n-2)}=-\bar{\lambda}^{m-n-1}(1-z\bar{\lambda})^{1-d-n-m}\langle z_i^{m-n}g,f\rangle.
    \end{equation}
    By comparing the coefficients of $\bar{\lambda}^k(k\geq m-n-1)$ we find
    $$\tilde{A}_{i}^{(f,g)}\binom{d+2n+k-1}{d+2n-1}z^k=-\binom{d+2n+k-1}{d+m+n-2}z^{k-m+n+1}\langle z_i^{m-n}g,f\rangle,$$
    and therefore
    $$\tilde{A}_{i}^{(f,g)}z^k=-\binom{d+2n+k-1}{d+m+n-2}\binom{d+2n+k-1}{d+2n-1}^{-1}\langle z_i^{m-n}g,f\rangle z^{k-m+n+1}.$$
    Then from
    $$\frac{||z^{n-m+k+1}||_{d+2m-2}^2}{||z^k||_{d+2n-2}^2}=\binom{d+2n+k-1}{d+2n-1}\binom{d+n+m+k}{d+2m-1}^{-1}$$
    we obtain
    \begin{equation}\label{eq:shift}
        \tilde{A}_{i}^{(f,g)}\frac{z^k}{||z^k||_{d+2n-2}}=\langle z_i^{m-n}g,f\rangle a_{m,n}(k)\frac{z^{k-m+n+1}}{||z^{k-m+n+1}||_{d+2m-2}},
    \end{equation}
    where
    $$a_{m,n}(k)=-\binom{d+2n+k-1}{d+m+n-2}\binom{d+2n+k-1}{d+2n-1}^{-1/2}\binom{d+n+m+k}{d+2m-1}^{-1/2}.$$
    Since $a_{m,n}(k)=O(k^{-1})$ as $k\to+\infty$, we find $\tilde{A}_{i}^{(f,g)}\in\mathcal{L}^2$.
\end{proof}
\begin{cor}\label{cor:isometry}
    There is an isometry $S\in B(\mathcal{N})$ which is unitarily equivalent to a $\binom{N+d-2}{d-1}$-shift and compact operators $K_i\in\mathcal{L}^{(1,\infty)}$ such that $S_{z_i}=S-K_i$. Moreover, there is a constant $c$ such that if $h\in\mathcal{N}$ is homogeneous and of degree $k$, then $||K_ih||\leq c(k+1)^{-1}||h||$.
\end{cor}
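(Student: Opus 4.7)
The plan is to construct $S$ as the direct sum, across the orthonormal basis $\cB$, of unilateral shifts on the weighted Bergman spaces appearing in the decomposition $\cN=\bigoplus_{g\in\cB}(\ker r_g)^\bot$. Concretely, for each $g\in\cB_n$ I pick the orthonormal basis $e_k^{(n)}=z^k/\|z^k\|_{d+2n-2}$ of $\cA_{d+2n-2}$ and define $S(r_g^*e_k^{(n)})=r_g^*e_{k+1}^{(n)}$. By the hockey-stick identity $|\cB|=\sum_{n=0}^{N-1}\binom{d+n-2}{d-2}=\binom{N+d-2}{d-1}$, so $S$ is an isometry unitarily equivalent to a unilateral shift of multiplicity $\binom{N+d-2}{d-1}$.

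Next I would decompose $K_i:=S-S_{z_i}=\sum_{g\in\cB}(SP_g-A_i^{(g,g)})-\sum_{f\neq g}A_i^{(f,g)}$. By Proposition \ref{prop:shift}(a) only pairs with $\deg f>\deg g$ survive in the off-diagonal sum. For the diagonal block indexed by $g\in\cB_n$, Proposition \ref{prop:multiplier} applied with $f=z_i$ (using $r(z_i)=z$) identifies $r_gA_i^{(g,g)}r_g^*$ with $M_z$ on $\cA_{d+2n-2}$, hence
$$r_g(SP_g-A_i^{(g,g)})r_g^*\, e_k^{(n)}=\Bigl(1-\sqrt{\tfrac{k+1}{k+d+2n}}\Bigr)e_{k+1}^{(n)},$$
whose coefficients are uniformly $O(1/k)$. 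For each off-diagonal pair, formula \eqref{eq:shift} together with $a_{m,n}(k)=O(1/k)$ exhibits $r_fA_i^{(f,g)}r_g^*$ as a weighted shift between the summands with weights $O(1/k)$.

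To package these estimates I observe that under the identification $U$ the homogeneous degree-$k$ subspace $\cN_k$ corresponds to $\bigoplus_{n\le\min(k,N-1)}\bigoplus_{g\in\cB_n}\mathbb{C}\cdot r_g^*e_{k-n}^{(n)}$, whose dimension equals $\binom{N+d-2}{d-1}$ for every $k\ge N-1$. Both $S$ and $S_{z_i}$ send $\cN_k$ into $\cN_{k+1}$, so $K_i$ does as well, and the diagonal and off-diagonal contributions above give a uniform bound $\|K_i h\|\le c(k+1)^{-1}\|h\|$ for homogeneous $h\in\cN_k$ (the finite supremum being taken over the index set $\cB$). Because the blocks $\cN_k$ have eventually constant dimension, the sorted singular values of $K_i$ satisfy $s_n(K_i)=O(1/n)$, placing $K_i\in\cL^{(1,\infty)}$.

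The main obstacle is the bookkeeping across three simultaneous decompositions (by basis element $g\in\cB$, by degree of homogeneity, and by weight of the relevant Bergman space) and verifying uniformity of the $O(1/k)$ estimates across the finite index sets labeling the blocks; once this uniformity is granted, the membership in $\cL^{(1,\infty)}$ reduces to the elementary fact that a weighted shift of bounded multiplicity with weights $O(1/k)$ lies in the Dixmier ideal.
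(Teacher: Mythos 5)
Your proposal is correct and follows essentially the same route as the paper: the paper also takes $S=U^*\tilde{S}U$ where $\tilde{S}$ shifts each orthonormal basis $z^k/\|z^k\|_{d+2m-2}$ of the Bergman summands, and deduces $K_i=S-S_{z_i}\in\mathcal{L}^{(1,\infty)}$ from the diagonal estimate (\ref{eq:shift2}) and the off-diagonal formula (\ref{eq:shift}) via Propositions \ref{prop:multiplier} and \ref{prop:shift}. Your explicit degree-block bookkeeping and singular-value estimate merely spell out what the paper leaves implicit, so no further changes are needed.
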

\begin{proof}
    By proposition \ref{prop:multiplier}, for each $f\in\mathcal{B}_m$
    $$\tilde{A}_{i}^{(f,f)}\frac{z^k}{||z^k||_{d+2m-2}}=\frac{||z^{k+1}||_{d+2m-2}}{||z^k||_{d+2m-2}}\frac{z^{k+1}}{||z^{k+1}||_{d+2m-2}}=\left(\frac{k+1}{d+2m+k}\right)^{1/2}\frac{z^{k+1}}{||z^{k+1}||_{d+2m-2}},$$
    and then
    \begin{eqnarray}\label{eq:shift2}
        &&\tilde{A}_{i}^{(f,f)}\frac{z^k}{||z^k||_{d+2m-2}}-\frac{z^{k+1}}{||z^{k+1}||_{d+2m-2}}\\
        &=&\left[\left(\frac{k+1}{d+2m+k}\right)^{1/2}-1\right]\frac{z^{k+1}}{||z^{k+1}||_{d+2m-2}}\notag\\
        &=&-\frac{d+2m-1}{(d+2m+k)\left[1+\left(\frac{k+1}{d+2m+k}\right)^{1/2}\right]}\frac{z^{k+1}}{||z^{k+1}||_{d+2m-2}}\notag
    \end{eqnarray}
    Denote by $\tilde{S}$ the isometry on $\bigoplus_{m=0}^{N-1}\bigoplus_{f\in\mathcal{B}_m}r_f\mathcal{N}$ that maps each $\frac{z^k}{||z^k||_{d+2m-2}}\in r_f\mathcal{N}$ to $\frac{z^{k+1}}{||z^{k+1}||_{d+2m-2}}$, then by equalities (\ref{eq:shift}) and (\ref{eq:shift2}),
    $$\tilde{K}_i=\tilde{S}-US_{z_i}U^*\in\mathcal{L}^{(1,\infty)}.$$
    The corollary holds for $S=U^*\tilde{S}U$ and $K_i=U^*\tilde{K}_iU$.
\end{proof}
\begin{rem}\label{rem:shif}
    In the case $\theta\neq(1,\ldots,1)$, define linear transform
    $$L_\theta:\mathbb{C}^d\to\mathbb{C}^d, (z_1,\ldots,z_d)\mapsto(\theta_1z_1,\ldots,\theta_dz_d).$$
    Then $U_\theta:H^2(\mathbb{D}^d)\to H^2(\mathbb{D}^d),f\mapsto f\circ L_\theta$ maps $\mathcal{N}_\theta=[J_\theta^N]^\bot$ unitarily onto $\mathcal{N}=[J^N]^\bot$. By the corollary, the is an isometry $S\in B(\mathcal{N})$ and compact operators $K_i$ such that $S_{z_i}=S-K_i$ for $1\leq i\leq d$. Therefore
    \begin{equation}\label{eq:rot}
        S_{z_i}\circ L_\theta=S\circ L_\theta-K_i\circ L_\theta.
    \end{equation}
    Denote by $S_\theta=S\circ L_\theta$, then $S_\theta$ is an isometry on $\mathcal{N}_\theta$, and $K_i\circ L_\theta$ is compact. Then equality (\ref{eq:rot}) actually shows that the contraction of $M_{z_i}$ on $\mathcal{N}_\theta$ is the sum of the isometry $S_\theta$ and some compact operator.
\end{rem}
\begin{cor}\label{cor:upright}
    $P_\mathcal{N}^\bot M_{z_i}P_\mathcal{N}\in\mathcal{L}^{(2,\infty)}$.
\end{cor}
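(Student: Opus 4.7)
The plan is to reduce the claim to an algebraic identity that puts $(P_{\mathcal{N}^\perp} M_{z_i} P_\mathcal{N})^*(P_{\mathcal{N}^\perp} M_{z_i} P_\mathcal{N})$ into a form where Corollary \ref{cor:isometry} can be applied directly. First, I would observe that $M_{z_i}$ is an isometry on $H^2(\mathbb{D}^d)$ because $|z_i|=1$ on $\mathbb{T}^d$, so $M_{z_i}^*M_{z_i}=I$. Setting $A=P_{\mathcal{N}^\perp}M_{z_i}P_\mathcal{N}$ and using $P_{\mathcal{N}^\perp}=I-P_\mathcal{N}$, one computes
$$A^*A = P_\mathcal{N}M_{z_i}^*(I-P_\mathcal{N})M_{z_i}P_\mathcal{N} = P_\mathcal{N}M_{z_i}^*M_{z_i}P_\mathcal{N} - S_{z_i}^*S_{z_i} = I_\mathcal{N}-S_{z_i}^*S_{z_i}.$$
So the whole question reduces to showing $I_\mathcal{N}-S_{z_i}^*S_{z_i}\in\mathcal{L}^{(1,\infty)}$.

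Next, I would invoke Corollary \ref{cor:isometry} to write $S_{z_i}=S-K_i$ with $S$ an isometry and $K_i\in\mathcal{L}^{(1,\infty)}$. Expanding,
$$I_\mathcal{N}-S_{z_i}^*S_{z_i} = I-(S^*-K_i^*)(S-K_i) = S^*K_i + K_i^*S - K_i^*K_i.$$
Since $\mathcal{L}^{(1,\infty)}$ is a two-sided ideal stable under the adjoint, both $S^*K_i$ and $K_i^*S$ lie in $\mathcal{L}^{(1,\infty)}$. For $K_i^*K_i$, the singular values satisfy $s_n(K_i^*K_i)=s_n(K_i)^2=O(n^{-2})$, so in fact $K_i^*K_i\in\mathcal{L}^{(1/2,\infty)}\subset\mathcal{L}^{(1,\infty)}$. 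Consequently $A^*A\in\mathcal{L}^{(1,\infty)}$.

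Finally, to pass from $A^*A\in\mathcal{L}^{(1,\infty)}$ to $A\in\mathcal{L}^{(2,\infty)}$, I would simply use the relation $s_n(A)=s_n(A^*A)^{1/2}$: the bound $s_n(A^*A)=O(n^{-1})$ gives $s_n(A)=O(n^{-1/2})$, which is exactly the defining condition of $\mathcal{L}^{(2,\infty)}$. There is essentially no obstacle here; all the serious work has already been done in Corollary \ref{cor:isometry}, and the present statement is a clean functional-analytic consequence of the isometry-plus-compact decomposition of $S_{z_i}$.
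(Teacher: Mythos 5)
Your proposal is correct, and it rests on the same two ingredients as the paper's proof: the fact that $M_{z_i}$ is an isometry (so that $\|P_\mathcal{N}^\bot M_{z_i}P_\mathcal{N}h\|^2=\|h\|^2-\|S_{z_i}h\|^2$, i.e.\ $A^*A=I_\mathcal{N}-S_{z_i}^*S_{z_i}$) and the decomposition $S_{z_i}=S-K_i$ of Corollary \ref{cor:isometry}. Where you diverge is in how you convert this into $(2,\infty)$-membership. The paper uses the quantitative ``Moreover'' clause of Corollary \ref{cor:isometry}, estimating $\|P_\mathcal{N}^\bot M_{z_i}P_\mathcal{N}h\|^2\leq 2\|K_ih\|\,\|h\|\leq 2c(\deg h+1)^{-1}\|h\|^2$ on homogeneous $h$, and then (implicitly) exploits the grading of $\mathcal{N}$ --- the operator respects degree and the homogeneous components have dimension bounded by $\binom{N+d-2}{d-1}$ --- to read off the singular value decay $s_n=O(n^{-1/2})$. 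You instead stay at the level of operator ideals: $A^*A=S^*K_i+K_i^*S-K_i^*K_i\in\mathcal{L}^{(1,\infty)}$ by the two-sided ideal property and $s_n(K_i^*K_i)=s_n(K_i)^2$, and then $s_n(A)=s_n(A^*A)^{1/2}=O(n^{-1/2})$. Your route needs only the qualitative membership $K_i\in\mathcal{L}^{(1,\infty)}$ (in the sense $s_n(K_i)=O(n^{-1})$, which is what the proof of Corollary \ref{cor:isometry} actually delivers) together with standard singular-value inequalities, and it makes fully explicit the passage to $\mathcal{L}^{(2,\infty)}$ that the paper leaves as ``therefore we conclude''; the paper's version, in exchange, produces a concrete degree-wise norm bound on homogeneous vectors. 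Both arguments are sound.
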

\begin{proof}
    By corollary \ref{cor:isometry}, $S_{z_i}=S-K_i$ where $S$ is an isometry and there is a constant $c$ such that $||K_ih||\leq c(\deg h)^{-1}||h||$ for homogeneous $h\in\mathcal{N}$. Suppose $h\in\mathcal{N}$ is homogeneous, then
    \begin{eqnarray*}
        ||P_\mathcal{N}^\bot M_{z_i}P_\mathcal{N}h||^2&=&\langle P_\mathcal{N}^\bot M_{z_i}h,M_{z_i}h\rangle\\
        &=&||M_{z_i}h||^2-\langle S_{z_i}h,S_{z_i}h\rangle\\
        &=&||h||^2-\langle(S-K_i)h,S-K_ih\rangle\\
        &=&\langle Sh,K_ih\rangle+\langle K_ih,Sh\rangle-\langle K_ih,K_ih\rangle\\
        &\leq&2||K_ih||\\
        &\leq&2c(\deg h)^{-1}||h||.
    \end{eqnarray*}
    Therefore we conclude $P_\mathcal{N}^\bot M_{z_i}P_\mathcal{N}\in\mathcal{L}^{(2,\infty)}$ as desired.
\end{proof}
\begin{thm}\label{thm:std}
    $\mathcal{N}$ is $1$-essentially normal.
\end{thm}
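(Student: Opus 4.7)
The plan is to verify each commutator $[S_{z_i}^*, S_{z_j}]$ lies in the trace class $\mathcal{L}^1$. Using Corollary \ref{cor:isometry} write $S_{z_i} = S - K_i$ with $S$ the fixed isometry and $K_i \in \mathcal{L}^{(1,\infty)}$, and expand
\[
[S_{z_i}^*, S_{z_j}] = [S^*, S] + [K_i^*, K_j] - [S^*, K_j] - [K_i^*, S].
\]
The term $[S^*, S] = I - SS^*$ is the finite-rank projection onto the wandering subspace of $S$, and $[K_i^*, K_j] \in \mathcal{L}^1$ by H\"older's inequality for weak Schatten classes, $\mathcal{L}^{(1,\infty)}\cdot\mathcal{L}^{(1,\infty)} \subset \mathcal{L}^{(1/2,\infty)} \subset \mathcal{L}^1$. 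It therefore suffices to prove $[S^*, K_j] \in \mathcal{L}^1$, since then $[K_i^*, S] = ([S^*, K_i])^* \in \mathcal{L}^1$ by taking adjoints.

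To analyze $[S^*, K_j]$, use the decomposition $\mathcal{N} = \bigoplus_{g\in\mathcal{B}} (\ker r_g)^\perp$ identified via $U$ with $\bigoplus_{g} \mathcal{A}_{d+2\deg g - 2}$. In this decomposition $S$ is block diagonal, the $g$-th block being the normalized shift $S^{(g)}\colon e_k \mapsto e_{k+1}$ on $\mathcal{A}_{d+2n-2}$. By Proposition \ref{prop:multiplier} each diagonal block $A_j^{(g,g)}$ of $S_{z_j}$ equals $M_z$ on the relevant Bergman space, independent of $j$; by Proposition \ref{prop:shift}(a) the off-diagonal blocks vanish unless $\deg f > \deg g$, where they take the weighted-shift form of (\ref{eq:shift}). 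Consequently $K_j = K^{(0)} - L_j$ with $K^{(0)} := \bigoplus_g (S^{(g)} - M_z)$ block diagonal and independent of $j$, and $L_j := \sum_{\deg f > \deg g} A_j^{(f,g)}$ strictly block lower-triangular. Then $[S^*, K_j]$ splits into finitely many blockwise commutators: on each diagonal component, $[S^{(g)*}, S^{(g)} - M_z] = (I - S^{(g)}S^{(g)*}) - [S^{(g)*}, M_z]$ equals a rank-one operator plus a diagonal operator in $\{e_k\}$ with eigenvalues $\sqrt{(k+1)/(k+d+2n)} - \sqrt{k/(k+d+2n-1)} = O(k^{-2})$; on each off-diagonal block, $[S^{(f)*}, \tilde{A}_j^{(f,g)}]$ is a weighted shift sending $e_k$ to a scalar multiple of $e_{k+n-m}$ whose weight, by Taylor expansion, is $O(k^{-2})$, since $a_{m,n}(k) = O(k^{-1})$ and the Bergman-shift coefficients on $\mathcal{A}_{d+2m-2}$ and $\mathcal{A}_{d+2n-2}$ both differ from $1$ by $O(k^{-1})$.

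The main obstacle is verifying the $O(k^{-2})$ cancellation in the off-diagonal case: the leading $C/k$ terms in $a_{m,n}(k)\beta_{k+n-m+1}$ and $a_{m,n}(k-1)\gamma_k$ must cancel exactly, and the next-order corrections --- coming separately from the ratios of binomial coefficients defining $a_{m,n}$ and from the asymptotics of the Bergman weights $\beta,\gamma$ --- must combine to leave no residual $O(k^{-1})$ term. Once these singular-value estimates are established, summing the finitely many trace-class blocks over $\mathcal{B}\times\mathcal{B}$ yields $[S^*, K_j] \in \mathcal{L}^1$, hence $[S_{z_i}^*, S_{z_j}] \in \mathcal{L}^1$, proving $\mathcal{N}$ is $1$-essentially normal.
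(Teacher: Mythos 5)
Your proposal is correct in substance, and its packaging is genuinely different from the paper's. The paper conjugates by $U$ and expands $U[S_{z_i}^*,S_{z_j}]U^*$ directly into sums of products of the blocks $\tilde{A}^{(f,g)}$, killing the cross terms with Proposition \ref{prop:shift} (zero blocks, or $\mathcal{L}^2\cdot\mathcal{L}^2\subset\mathcal{L}^1$) and then checking the surviving terms $\tilde{A}_j^{(f,f)*}\tilde{A}_i^{(f,g)}-\tilde{A}_i^{(f,g)}\tilde{A}_j^{(g,g)*}$ by an explicit weight computation ($b_{m,n}(k)=O(k^{-2})$). You instead exploit Corollary \ref{cor:isometry} algebraically: writing $S_{z_i}=S-K_i$, the commutator splits into $[S^*,S]$ (finite rank, since $S$ has finite multiplicity), $[K_i^*,K_j]$ (trace class by the weak-Schatten H\"older inequality $\mathcal{L}^{(1,\infty)}\cdot\mathcal{L}^{(1,\infty)}\subset\mathcal{L}^1$), and $[S^*,K_j]$ plus its adjoint. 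This reduction is cleaner than the paper's bookkeeping, and what it buys is that the only remaining estimates are commutators against the \emph{unweighted} block shift $S$; the price is that it still bottoms out in the same block data (Propositions \ref{prop:multiplier} and \ref{prop:shift}) and the same kind of second-order cancellation that the paper verifies.

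Two caveats on your final step. First, your description of the off-diagonal estimate imports the paper's computation rather than your own: since you commute against $S^*$ (unweighted backward shift on each block), no Bergman weights $\beta,\gamma$ appear at all; the $(f,g)$-block of $[S^*,L_j]$ sends $e_k$ to $\langle z_j^{m-n}g,f\rangle\,\bigl(a_{m,n}(k)-a_{m,n}(k-1)\bigr)e_{k-m+n}$, so the "exact cancellation of the $C/k$ terms" you worry about reduces to the single claim $a_{m,n}(k)-a_{m,n}(k-1)=O(k^{-2})$. Second, you leave that claim (and you must not deduce it merely from $a_{m,n}(k)=O(k^{-1})$, which is insufficient) as an ``obstacle'' rather than proving it; it is, however, routine: $a_{m,n}(k)$ is an algebraic function of $k$, a ratio of polynomials under square roots, with $a_{m,n}(k)\sim -Ck^{-1}$ and derivative $O(k^{-2})$, so the mean value theorem (or direct expansion of the binomial ratios) gives the $O(k^{-2})$ difference. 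With that verification, and the analogous diagonal computation $[S^{(g)*},S^{(g)}-M_z]$ being rank one plus a diagonal with entries $\beta_k-\beta_{k-1}=O(k^{-2})$, your argument is complete and at essentially the same level of detail as the paper's own assertion that $b_{m,n}(k)=O(k^{-2})$.
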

\begin{proof}
    Let $U$ be the unitary defined in (\ref{eq:U}). By definition we need to prove for $i,j=1,\ldots,d$ that
    \begin{eqnarray}
        U[S_{z_i}^*,S_{z_j}]U^*=\sum_{f,h,g\in\mathcal{B}}\tilde{A}_i^{(h,f)*}\tilde{A}_j^{(h,g)}-\tilde{A}_j^{(f,h)}\tilde{A}_i^{(g,h)*}\in\mathcal{L}^1.
    \end{eqnarray}
    By proposition \ref{prop:shift}, the terms for which $h\neq f$ and $h\neq g$ must belong to $\mathcal{L}^1$. Then it is sufficient to prove for $f,g\in\mathcal{B}$ that
    \begin{equation}\label{eq:shift1}
        \tilde{A}_{j}^{(f,f)*}\tilde{A}_{i}^{(f,g)}-\tilde{A}_{i}^{(f,g)}\tilde{A}_{j}^{(g,g)*}\in\mathcal{L}^1
    \end{equation}
    and
    $$\tilde{A}_i^{(g,f)*}\tilde{A}_j^{(g,g)}-\tilde{A}_j^{(f,f)}\tilde{A}_i^{(g,f)*}\in\mathcal{L}^1.$$
    Suppose $f\in\mathcal{B}_m$ and $g\in\mathcal{B}_n$. By symmetry we can assume $m\geq n$, and then by proposition \ref{prop:shift}
    $$\tilde{A}_i^{(g,f)*}\tilde{A}_j^{(g,g)}-\tilde{A}_j^{(f,f)}\tilde{A}_i^{(g,f)*}=0$$
    if $f\neq g$, and
    $$\tilde{A}_j^{(f,f)*}\tilde{A}_{i}^{(f,g)}-\tilde{A}_i^{(f,g)}\tilde{A}_j^{(g,g)*}=0$$
    if $m=n$ and $f\neq g$. When $f=g$ then $\tilde{A}_i^{(f,f)}$ is the shift $M_z$ acting on $A_{d+2n-2}$, and therefore $[\tilde{A}_i^{(f,f)*},\tilde{A}_j^{(f,f)}]\in\mathcal{L}^1$.
    Then it suffices to prove (\ref{eq:shift1}) for $m>n$.

    It is routine to check
    $$\tilde{A}_{i}^{(f,f)*}\frac{z^k}{||z^k||}=\sqrt{\frac{k}{d+2m+k-1}}\frac{z^{k-1}}{||z^{k-1}||},~\forall k\in\mathbb{Z}_+$$
    and
    $$\tilde{A}_{j}^{(g,g)*}\frac{z^k}{||z^k||}=\sqrt{\frac{k}{d+2n+k-1}}\frac{z^{k-1}}{||z^{k-1}||},~\forall k\in\mathbb{Z}_+.$$
    Then by (\ref{eq:shift})
    \begin{eqnarray*}
        &&(\tilde{A}_{j}^{(f,f)*}\tilde{A}_{i}^{(f,g)}-\tilde{A}_{i}^{(f,g)}\tilde{A}_{j}^{(g,g)*})\frac{z^k}{||z^k||_{d+2n-2}}\\
        &=&\langle z_i^{m-n}g,f\rangle\left(a_{m,n}(k)\sqrt{\frac{k-m+n+1}{d+m+n+k}}-a_{m,n}(k-1)\sqrt{\frac{k}{d+2n+k-1}}\right)\frac{z^{k-m+n}}{||z^{k-m+n}||_{d+2m-2}}\\
        &=&\langle z_i^{m-n}g,f\rangle b_{m,n}(k)\frac{z^{k-m+n}}{||z^{k-m+n}||_{d+2m-2}}
    \end{eqnarray*}
    where $b_{m,n}(k)=O(k^{-2})$ as $k\to+\infty$. Therefore (\ref{eq:shift1}) holds for $m>n$, completing the proof of the theorem.
\end{proof}
To find the traces of commutators of multiplication operators, we need the following lemma.
\begin{lem}\label{lem:trace}
    $\tr[S_{f_1}^*,S_{f_2}]=0$ for $f_1\in J$ and $f_2\in\poly$.
\end{lem}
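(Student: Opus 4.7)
The plan is to exploit the graded decomposition $\mathcal{N}=\bigoplus_{n=0}^{N-1}H_n$ from Proposition \ref{prop:deg}, with $P_n$ denoting the orthogonal projection onto $H_n$. By Lemma \ref{lem:coker} we have $H_n\subset[J^n]$, and using Proposition \ref{prop:characterization} together with the fact that each $r_g$ is an isometry on $(\ker r_g)^\bot$, one verifies the stronger identification $[J^m]\cap\mathcal{N}=\bigoplus_{k\geq m}H_k$. Since $[J^m]$ is invariant under multiplication by any polynomial, this yields two triangularity properties with respect to the grading: $S_f$ is block upper-triangular for every $f\in\poly$, and $S_{f_1}$ is strictly block upper-triangular when $f_1\in J$, because then $f_1\cdot[J^m]\subset[J^{m+1}]$. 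Equivalently, $P_nS_{f_2}P_k=0$ whenever $n<k$, and $P_nS_{f_1}^*P_k=0$ whenever $n\geq k$.

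Theorem \ref{thm:std}, combined with the derivation identity $[A^*,BC]=[A^*,B]C+B[A^*,C]$, shows $[S_{f_1}^*,S_{f_2}]\in\mathcal{L}^1$ for all polynomials. I would then split the trace along the grading and use triangularity to collapse the inner sums:
\begin{align*}
\tr[S_{f_1}^*,S_{f_2}]
 &=\sum_{n}\tr(P_nS_{f_1}^*S_{f_2}P_n)-\sum_{n}\tr(P_nS_{f_2}S_{f_1}^*P_n)\\
 &=\sum_{n}\sum_{k>n}\tr(P_nS_{f_1}^*P_kS_{f_2}P_n)-\sum_{n}\sum_{k<n}\tr(P_nS_{f_2}P_kS_{f_1}^*P_n).
\end{align*}
Relabeling $(n,k)\leftrightarrow(k,n)$ in the second double sum turns it into $\sum_{n}\sum_{k>n}\tr(P_kS_{f_2}P_nS_{f_1}^*P_k)$, which by cyclicity of the trace equals the first double sum term by term. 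Hence the difference is zero.

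The one nontrivial point is the justification of cyclicity on each individual summand: one needs $P_nS_{f_1}^*P_k$ and $P_kS_{f_2}P_n$ (for $k\neq n$) to be Hilbert-Schmidt so that their product is trace class. Proposition \ref{prop:shift} delivers exactly this for the generators $S_{z_i}$. Writing $S_{z_i}=D_i+O_i$ with $D_i$ block-diagonal and $O_i\in\mathcal{L}^2$ strictly block upper-triangular, and expanding $S_{f_j}$ as a polynomial in the $S_{z_i}$'s, every monomial contributing to an off-diagonal block must contain at least one $O_i$-factor and therefore lies in the two-sided ideal $\mathcal{L}^2$. The main obstacle is thus propagating the Hilbert-Schmidt off-diagonal structure from the generators to arbitrary polynomials $S_{f_j}$; once this is established, the remainder of the argument reduces to the bookkeeping relabeling above.
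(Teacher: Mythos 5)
Your argument is correct, and it runs on the same engine as the paper's proof: the graded structure of $\mathcal{N}$, the zero/Hilbert--Schmidt dichotomy for degree-raising blocks coming from Proposition \ref{prop:shift}, the fact that $f_1\in J$ strictly raises the grading, and cyclicity of the trace for a product of two $\mathcal{L}^2$ operators. The bookkeeping, however, is genuinely different. The paper works with the finer decomposition $\mathcal{N}=\bigoplus_{g\in\mathcal{B}}(\ker r_g)^\bot$, expands the commutator into a triple sum over $g,h,l\in\mathcal{B}$, disposes of the $g\neq h$ terms by orthogonality, and cancels the surviving $g=h$ terms with $\deg l>\deg g$ via $\tr(AB)=\tr(BA)$; you instead use the coarser grading $\mathcal{N}=\bigoplus_n H_n$, reduce to a double sum by triangularity, and cancel by relabeling $(n,k)\leftrightarrow(k,n)$. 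Your route buys one real improvement: by splitting $S_{z_i}=D_i+O_i$ with $O_i\in\mathcal{L}^2$ and using multiplicativity $S_f=f(S_{z_1},\ldots,S_{z_d})$, you justify that the degree-raising blocks of $S_{f_1},S_{f_2}$ for \emph{arbitrary} polynomials are Hilbert--Schmidt, whereas the paper quietly applies Proposition \ref{prop:shift} and the proof of Theorem \ref{thm:std} (stated for the coordinate functions) to general symbols, leaving that propagation implicit. What the paper's finer decomposition buys in return is reuse: the subsequent trace formula identifies the diagonal blocks $P_hS_{g}P_h$ with Bergman shifts one basis vector $h\in\mathcal{B}_n$ at a time, so the $\mathcal{B}$-indexed bookkeeping is needed there anyway; for the present lemma alone your coarser grading suffices, and your splitting of the trace into two separately finite sums is legitimate precisely because each block product $P_nS_{f_1}^*P_kS_{f_2}P_n$ is a product of two $\mathcal{L}^2$ blocks and the index ranges are finite ($n,k\leq N-1$).
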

\begin{proof}
    $[S_{f_1}^*,S_{f_2}]$ belongs to the trace class by Theorem \ref{thm:std}. We have
    \begin{equation}\label{eq:trace}
        [S_{f_1}^*,S_{f_2}]=\sum_{g,h,l\in\mathcal{B}}P_gS_{f_1}^*P_lS_{f_2}P_h-P_gS_{f_2}P_lS_{f_1}^*P_h.
    \end{equation}
    If $g,h,l$ are different from each other, proposition \ref{prop:shift} shows that both $P_gS_{f_1}^*P_lS_{f_2}P_h$ and $P_gS_{f_2}P_lS_{f_1}^*P_h$ belong to the trace class. Then the argument before proposition \ref{prop:multiplier} shows $P_g\bot P_h$ and we have
    \begin{equation}\label{eq:different}
        \tr P_gS_{f_1}^*P_lS_{f_2}P_h=\tr P_gS_{f_2}P_lS_{f_1}^*P_h=0.
    \end{equation}
    When $g\neq h$, the proof of Theorem \ref{thm:std} shows
    $$P_gS_{f_1}^*P_gS_{f_2}P_h-P_gS_{f_2}P_hS_{f_1}^*P_h\in\mathcal{L}^1$$
    and
    $$P_gS_{f_1}^*P_gS_{f_2}P_h-P_gS_{f_2}P_hS_{f_1}^*P_h\in\mathcal{L}^1,$$
    then since $P_g\bot P_h$ we find
    $$\tr(P_gS_{f_1}^*P_gS_{f_2}P_h-P_gS_{f_2}P_hS_{f_1}^*P_h)=\tr(P_gS_{f_1}^*P_gS_{f_2}P_h-P_gS_{f_2}P_hS_{f_1}^*P_h)=0.$$
    This equality together with (\ref{eq:different}) shows
    $$\sum_{g,h,l\in\mathcal{B},g\neq h}\tr(P_gS_{f_1}^*P_lS_{f_2}P_h-P_gS_{f_2}P_lS_{f_1}^*P_h)=0.$$
    Since $f_1\in J$, $S_{f_1}$ maps each $J_n$ into $J^{n+1}$, and then we have
    \begin{eqnarray*}
        \tr[S_{f_1}^*,S_{f_2}]&=&\tr\sum_{g,l\in\mathcal{B}}P_gS_{f_1}^*P_lS_{f_2}P_g-P_gS_{f_2}P_lS_{f_1}^*P_g\\
        &=&\tr\sum_{g,l\in\mathcal{B}}P_gS_{f_1}^*P_lS_{f_2}P_g-P_lS_{f_2}P_gS_{f_1}^*P_l\\
        &=&\sum_{g,l\in\mathcal{B},\deg l>\deg g}\tr(P_gS_{f_1}^*P_lS_{f_2}P_g-P_lS_{f_2}P_gS_{f_1}^*P_l)\\
        &=&0,
    \end{eqnarray*}
    where the last equality comes from the fact that both $P_gS_{f_1}^*P_l$ and $P_lS_{f_2}P_g$ belong to $\mathcal{L}^2$.
\end{proof}
Next, we have the following trace formula.
\begin{thm}
    $\tr[S_{f_1}^*,S_{f_2}]=\binom{d+N-2}{d-1}\langle (rf_2)^\prime,(rf_1)^\prime\rangle_{L_a^2(\mathbb D)}$ for $f_1,f_2\in\poly$.
\end{thm}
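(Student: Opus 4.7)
My plan is to reduce the trace to a sum over diagonal blocks, each of which is a weighted Bergman space on which the action is ordinary polynomial multiplication.

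\emph{Reduction to polynomials in $z_1$.} For every $f\in\poly$ the polynomial $f-(rf)(z_1)$ has vanishing restriction to the diagonal, so by Proposition \ref{prop:characterization} (with $N=1$) it belongs to $J$. Applying Lemma \ref{lem:trace} in both slots of the bilinear functional $(f_1,f_2)\mapsto\tr[S_{f_1}^*,S_{f_2}]$ (the second slot via $\overline{\tr[A^*,B]}=\tr[B^*,A]$), the trace depends only on the restrictions $p_i:=rf_i$, i.e.
$$\tr[S_{f_1}^*,S_{f_2}]=\tr[S_{p_1(z_1)}^*,S_{p_2(z_1)}].$$
Note that both sides are well defined since $S_{p(z_1)}=p(S_{z_1})$ because $\mathcal{M}=[J^N]$ is invariant under each $M_{z_i}$.

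\emph{Block structure and cancellation of off-diagonal pieces.} Using $S_{p_i(z_1)}=p_i(S_{z_1})$, Proposition \ref{prop:shift} together with the fact that the Hilbert--Schmidt operators form a two-sided ideal shows that $S_{p_i(z_1)}$ decomposes as a block-diagonal part in the grading $\mathcal{N}=\bigoplus_{n=0}^{N-1}\bigoplus_{g\in\mathcal{B}_n}(\ker r_g)^\bot$ plus a Hilbert--Schmidt strictly-upper part. Choosing an ONB adapted to this direct-sum decomposition,
$$\tr[S_{p_1(z_1)}^*,S_{p_2(z_1)}]=\sum_g\tr\bigl(P_g[S_{p_1(z_1)}^*,S_{p_2(z_1)}]P_g\bigr).$$
Inserting $I=P_g+P_g^\bot$ between the factors of each product splits each summand into the diagonal commutator $[P_gS_{p_1(z_1)}^*P_g,P_gS_{p_2(z_1)}P_g]$ plus cross terms of the form $(P_g A^*P_h)(P_h B P_g)$ with $\deg h\ne\deg g$. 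By Proposition \ref{prop:shift} each cross piece is a product of two Hilbert--Schmidt operators and thus trace class; applying the cyclic identity $\tr(XY)=\tr(YX)$ for Hilbert--Schmidt $X,Y$ on distinct subspaces and then relabeling the summation indices $g\leftrightarrow h$ (exactly as in the proof of Lemma \ref{lem:trace}), the two cross sums cancel.

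\emph{Diagonal contribution.} By Proposition \ref{prop:multiplier}, on each $(\ker r_g)^\bot\cong\mathcal{A}_{d+2n-2}$ (with $g\in\mathcal{B}_n$) the diagonal block $P_gS_{p_i(z_1)}P_g$ is unitarily equivalent to the multiplication operator $M_{p_i}$ on the weighted Bergman space $\mathcal{A}_{d+2n-2}$. A direct computation on the ONB $\{z^k/\|z^k\|_{d+2n-2}\}$---telescoping for monomials $p_i=z^{j_i}$, then extending by bilinearity---yields the weight-independent trace identity
$$\tr[M_{p_1}^*,M_{p_2}]_{\mathcal{A}_m}=\langle p_2',p_1'\rangle_{L_a^2(\mathbb{D})}\qquad\text{for every }m\geq 0.$$
Summing these contributions, using $|\mathcal{B}_n|=\binom{d+n-2}{d-2}$ and the hockey-stick identity $\sum_{n=0}^{N-1}\binom{d+n-2}{d-2}=\binom{d+N-2}{d-1}$, gives the claimed formula.

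\emph{Main difficulty.} The delicate step is the cross-term cancellation: one must track which compositions are Hilbert--Schmidt versus merely bounded, verify that $S_{p_i(z_1)}=p_i(S_{z_1})$ inherits the upper-triangular-plus-HS structure of $S_{z_1}$ (where the argument uses HS being an ideal and the finiteness of $\mathcal{B}$), and apply the cyclic trace identity across distinct summands of the direct-sum decomposition with appropriate bookkeeping. The secondary technical point is verifying that the weighted Bergman trace formula is independent of the weight $m$, which is what produces the purely combinatorial prefactor $\binom{d+N-2}{d-1}$.
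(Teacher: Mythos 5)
Your proposal is correct and follows essentially the same route as the paper: reduce to $rf_i(z_1)$ via Lemma \ref{lem:trace}, cancel the off-diagonal (Hilbert--Schmidt) cross terms over the grading $\mathcal{N}=\bigoplus_{g\in\mathcal{B}}(\ker r_g)^\bot$ by cyclicity of the trace, identify each diagonal block with $M_{rf_i}$ on $\mathcal{A}_{d+2n-2}$ via Proposition \ref{prop:multiplier}, and sum the weight-independent Bergman traces using $\sum_{n=0}^{N-1}\binom{d+n-2}{d-2}=\binom{d+N-2}{d-1}$. The only (harmless) deviation is that you verify the weighted Bergman trace identity by a direct telescoping computation on the orthonormal basis, where the paper simply cites Feldman's Berger--Shaw-type result \cite{Fel}.
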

\begin{proof}
    Denote by $g_i(z)=rf_i(z_1),i=1,2.$ The previous lemma shows
    $$\tr[S_{f_1}^*,S_{f_2}]=\tr[S_{g_1}^*,S_{g_2}].$$
    By the proof of Lemma \ref{lem:trace},
    $$\sum_{g,h,l\in\mathcal{B},g\neq h}\tr(P_gS_{g_1}^*P_lS_{g_2}P_h-P_gS_{g_2}P_lS_{g_1}^*P_h)=0.$$
    When $g=h$ and $l\neq h$, proposition \ref{prop:shift} shows $P_gS_{g_1}^*P_l,P_lS_{g_2}P_h\in\mathcal{L}^2$, and then
    $$\sum_{h,l\in\mathcal{B},l\neq h}\tr(P_hS_{g_1}^*P_lS_{g_2}P_h-P_hS_{g_2}P_lS_{g_1}^*P_h)=0.$$
    Therefore
    \begin{eqnarray*}
        \tr[S_{g_1}^*,S_{g_2}]&=&\tr\sum_{g,h,l\in\mathcal{B}}P_gS_{g_1}^*P_lS_{g_2}P_h-P_gS_{g_2}P_lS_{g_1}^*P_h\\
        &=&\tr\sum_{h\in\mathcal{B}}P_hS_{g_1}^*P_hS_{g_2}P_h-P_hS_{g_2}P_hS_{g_1}^*P_h\\
        &=&\sum_{n=0}^{N-1}\sum_{h\in\mathcal{B}_n}\tr[M_{rf_1}^{(d+2n-2)*},M_{rf_2}^{(d+2n-2)}]\\
        &=&\sum_{n=0}^{N-1}\binom{d+n-2}{d-2}\langle (rf_2)^\prime,(rf_1)^\prime\rangle_{L_a^2(\mathbb D)}~~~~~~~~\text{(by \cite{Fel})}\\
        &=&\binom{d+N-2}{d-1}\langle (rf_2)^\prime,(rf_1)^\prime\rangle_{L_a^2(\mathbb D)}.
    \end{eqnarray*}
\end{proof}
To prove the essential normality of general homogeneous quotient modules, we need the following lemma.
\begin{lem}
    For each $1\leq i\leq d$, $S_{z_i}$ is $\binom{N+d-2}{d-1}$-multicyclic.
\end{lem}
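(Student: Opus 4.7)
The plan is to exhibit $K := \binom{N+d-2}{d-1}$ explicit joint cyclic vectors for $S_{z_i}$ and to exploit the lower-triangular structure of $S_{z_i}$ with respect to the orthogonal decomposition $\mathcal{N} = \bigoplus_{g \in \mathcal{B}} (\ker r_g)^\perp$ established in Proposition \ref{prop:deg} and its preceding discussion. For each $g \in \mathcal{B}$ I would take $\xi_g := g$ itself; by Lemma \ref{lem:coker} applied at $\lambda = 0$ (where $\psi_0 = \mathrm{Id}$ and $K_0^\otimes = 1$) the vector $\xi_g$ lies in $(\ker r_g)^\perp$, and the identity $r_g\, M_g = \langle g,g\rangle\, r$ gives $r_g \xi_g = 1$, the cyclic vector for $M_z$ on $\mathcal{A}_{d+2n-2}$, where $n = \deg g$. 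The count $|\mathcal{B}| = \sum_{n=0}^{N-1} \binom{d+n-2}{d-2} = \binom{d+N-2}{d-1}$ matches the claim via the hockey-stick identity.

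Next I would introduce the decreasing, $S_{z_i}$-invariant filtration $\mathcal{N}_n := \bigoplus_{m \geq n} H_m$, with $\mathcal{N}_0 = \mathcal{N}$ and $\mathcal{N}_N = 0$. Proposition \ref{prop:shift}(a) yields $P_f S_{z_i} P_g = 0$ whenever $\deg f < \deg g$, whence invariance. Moreover, within $H_n$ the same proposition gives $P_{g'} S_{z_i} P_g = 0$ for distinct $g, g' \in \mathcal{B}_n$, so for $v \in (\ker r_g)^\perp$ one has $P_{H_n} S_{z_i} v = A_i^{(g,g)} v$, which by Proposition \ref{prop:multiplier} is unitarily equivalent to $M_z$ acting on $\mathcal{A}_{d+2n-2}$.

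The main step is a downward induction on $n$ showing that the cyclic subspace $C := \overline{\operatorname{span}}\{S_{z_i}^k \xi_g : k \geq 0,\, g \in \mathcal{B}\}$ contains $\mathcal{N}_n$. The base case $n = N$ is trivial. For the inductive step, fix $g \in \mathcal{B}_n$. A routine induction on $k$, using that $\mathcal{N}_{n+1}$ is $S_{z_i}$-invariant and that $S_{z_i}$ sends $(\ker r_g)^\perp$ into $(\ker r_g)^\perp + \mathcal{N}_{n+1}$ with $(\ker r_g)^\perp$-component $A_i^{(g,g)}$, gives the triangular identity $P_{H_n} S_{z_i}^k \xi_g = (A_i^{(g,g)})^k \xi_g$. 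Since $\xi_g$ maps under $r_g$ to the cyclic vector $1$ for $M_z$ on $\mathcal{A}_{d+2n-2}$, the closed span of $\{(A_i^{(g,g)})^k \xi_g : k \geq 0\}$ is all of $(\ker r_g)^\perp$. On the other hand $S_{z_i}^k \xi_g - P_{H_n} S_{z_i}^k \xi_g = P_{\mathcal{N}_{n+1}} S_{z_i}^k \xi_g$ lies in $\mathcal{N}_{n+1}$, which is contained in $C$ by the inductive hypothesis, so $P_{H_n} S_{z_i}^k \xi_g \in C$. Letting $g$ range over $\mathcal{B}_n$ yields $H_n \subset C$ and hence $\mathcal{N}_n = H_n + \mathcal{N}_{n+1} \subset C$; finishing at $n = 0$ gives $\mathcal{N} = C$, proving joint cyclicity.

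No step is genuinely difficult: the only real content is the triangular identity $P_{H_n} S_{z_i}^k \xi_g = (A_i^{(g,g)})^k \xi_g$, which is a clean consequence of Proposition \ref{prop:shift}(a). Had the off-diagonal blocks $A_i^{(g',g)}$ with $g' \neq g$ but $\deg g' = \deg g$ been nonzero, one would need to diagonalize the restriction of $S_{z_i}$ to $H_n$ and the count $|\mathcal{B}|$ would no longer be visibly sharp; it is precisely the vanishing supplied by Proposition \ref{prop:shift}(a) that pins down $\binom{N+d-2}{d-1}$ as the correct number of multicyclic generators.
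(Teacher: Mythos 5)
Your argument is correct, but it takes a different route from the paper's. The paper's proof is a two-line algebraic computation: since $\{z_i,w_2,\ldots,w_d\}$ is a basis of the linear forms, any homogeneous $f\in\mathcal{N}$ of degree $n$ can be written as $\sum_{\alpha_1=0}c_\alpha w^\alpha z_i^{\,n-|\alpha|}$; the terms with $|\alpha|\geq N$ lie in $[J^N]$, so projecting onto $\mathcal{N}$ gives $f=\sum_{|\alpha|<N}c_\alpha S_{z_i}^{\,n-|\alpha|}P_\mathcal{N}w^\alpha$, and density of homogeneous elements in $\mathcal{N}$ finishes the count $\binom{N+d-2}{d-1}=\#\{\alpha:\alpha_1=0,|\alpha|<N\}$. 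You instead work operator-theoretically: you take the (essentially equivalent) generators $g\in\mathcal{B}$ spanning $\bigoplus_{n<N}J_n$, and run a downward induction on the filtration $\mathcal{N}_n=\bigoplus_{m\geq n}H_m$, using the vanishing $A_i^{(f,g)}=0$ for $\deg f\leq\deg g$, $f\neq g$ (Proposition \ref{prop:shift}(a)) to get the triangular identity $P_{H_n}S_{z_i}^k\xi_g=(A_i^{(g,g)})^k\xi_g$, and the unitary equivalence $A_i^{(g,g)}\cong M_z$ on $\mathcal{A}_{d+2n-2}$ (Proposition \ref{prop:multiplier}) together with $r_g\xi_g=1$ to see each diagonal block is singly generated. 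Both proofs are sound and yield the same bound; the paper's is shorter and needs none of the analytic structure (it is purely an ideal-membership argument modulo $[J^N]$), while yours leans on the machinery already established in Section 2 and in return makes visible why each block $(\ker r_g)^\perp$ contributes exactly one cyclic vector, i.e., why the count is the natural one for this decomposition. One small point of care in your write-up: the conclusion $P_{H_n}S_{z_i}^k\xi_g\in C$ uses that $S_{z_i}^k\xi_g$ has no component in $H_m$ for $m<n$, which is exactly the $S_{z_i}$-invariance of $\mathcal{N}_n$ you established, so the step is fine as stated.
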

\begin{proof}
    Suppose $f\in\mathcal{N}$ is a homogeneous polynomial of degree $n$. Then we can write $f$ as
    $$f(z)=\sum_{\alpha\in\mathbb{Z}_+^d,\alpha_1=0}c_\alpha w^\alpha z_i^{n-|\alpha|},\forall z\in\mathbb{D}^d,$$
    where the $c_\alpha$'s are coefficients. Since $w^\alpha z_i^{n-|\alpha|}\in[J^N]$ whenever $|\alpha|\geq N$, we find
    $$f=\sum_{\alpha\in\mathcal{B}}c_\alpha S_{z_i}^{n-|\alpha|}w^\alpha.$$
    Then the lemma is proved.
\end{proof}
\begin{thm}\label{thm:primary}
    Suppose $I\subset\poly$ is a homogeneous ideal with $Z(I)=V$. Then the quotient module $\mathcal{M}=[I]^\bot$ is $1$-essentially normal.
\end{thm}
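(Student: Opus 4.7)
The plan is to reduce the general primary case to the case $I=J_\theta^N$ already handled by Theorem \ref{thm:std}. Since $I$ is homogeneous with $Z(I)=V_\theta$, the Nullstellensatz gives $\sqrt I=J_\theta$, and the Noetherian property of $\poly$ furnishes an integer $N\geq 1$ with $J_\theta^N\subset I\subset J_\theta$. Taking orthocomplements in $H^2(\mathbb D^d)$ yields the chain $\mathcal M=[I]^\bot\subset[J_\theta^N]^\bot=\mathcal N$, and I set $\mathcal K:=\mathcal N\ominus\mathcal M=[I]\ominus[J_\theta^N]$. Since $[I]$ is a submodule of $H^2(\mathbb D^d)$ and $[J_\theta^N]\subset[I]$, the subspace $\mathcal K$ is $S_{z_i}$-invariant in $\mathcal N$; equivalently, $\mathcal M$ is $S_{z_i}^*$-invariant.

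With respect to the orthogonal decomposition $\mathcal N=\mathcal M\oplus\mathcal K$, each $S_{z_i}$ becomes block lower-triangular,
\[
S_{z_i}=\begin{pmatrix}T_i&0\\B_i'&C_i\end{pmatrix},\qquad T_i:=P_{\mathcal M}S_{z_i}|_{\mathcal M},\qquad B_i':=P_{\mathcal K}S_{z_i}|_{\mathcal M},
\]
and the $T_i$ are precisely the compressions of $M_{z_i}$ defining the quotient module structure on $\mathcal M$. A direct block multiplication then yields the key identity
\[
[T_i^*,T_j]=P_{\mathcal M}[S_{z_i}^*,S_{z_j}]P_{\mathcal M}-(B_i')^*B_j'.
\]
By Theorem \ref{thm:std}, $[S_{z_i}^*,S_{z_j}]\in\mathcal L^1$, so the first term lies in $\mathcal L^1$ as a compression of a trace-class operator. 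The remaining task is to show $(B_i')^*B_j'\in\mathcal L^1$.

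For this it suffices to prove each $B_i'$ is Hilbert--Schmidt. I would invoke Corollary \ref{cor:isometry} to write $S_{z_i}=S-K_i$, with $S$ an isometry preserving the graded decomposition $\mathcal N=\bigoplus_{n=0}^{N-1}H_n$ and $K_i\in\mathcal L^{(1,\infty)}$. Thus $B_i'=P_{\mathcal K}S|_{\mathcal M}-P_{\mathcal K}K_i|_{\mathcal M}$, and the second piece lies in $\mathcal L^{(1,\infty)}\subset\mathcal L^2$. The inclusion $I\subset J_\theta$ forces $\mathcal K\subset H_1\oplus\cdots\oplus H_{N-1}$ (avoiding $H_0=[J_\theta]^\bot$), so $P_{\mathcal K}SP_{\mathcal M}$ annihilates the $H_0$-component of $\mathcal M$ and reduces to finitely many block entries among the grading levels $H_1,\ldots,H_{N-1}$. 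Within each $H_n\cong\bigoplus_{g\in\mathcal B_n}\mathcal A_{d+2n-2}$ the isometry $S$ is a direct sum of unilateral Bergman shifts, and the desired $\mathcal L^2$ estimate should follow by analyzing how the finite-dimensional homogeneous quotients $I_n/(J_\theta^N)_n$ cut out $\mathcal K\cap H_n$ inside $H_n$, essentially as in the proof of Proposition \ref{prop:shift}.

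The main technical obstacle is precisely this last step. The grading $\{H_n\}$ of $\mathcal N$ is not the polynomial degree grading, so the homogeneity of $I$ does not immediately imply that $\mathcal K$ decomposes along $\{H_n\}$; the cross-grading shift estimates of Proposition \ref{prop:shift} must be restricted to $\mathcal K\cap H_n$ and $\mathcal M\cap H_n$ and summed. Once $B_i'\in\mathcal L^2$ is in hand, $(B_i')^*B_j'\in\mathcal L^2\cdot\mathcal L^2\subset\mathcal L^1$, and combined with the compression term we obtain $[T_i^*,T_j]\in\mathcal L^1$ for all $i,j$, completing the proof that $\mathcal M$ is $1$-essentially normal.
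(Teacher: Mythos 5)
Your reduction is exactly the paper's: from $\sqrt I=J$ you get $J^N\subset I$, hence $\mathcal M\subset\mathcal N=[J^N]^\bot$; the subspace $\mathcal K=\mathcal N\ominus\mathcal M$ is $S_{z_i}$-invariant, the block-triangular computation gives $[T_i^*,T_j]=P_{\mathcal M}[S_{z_i}^*,S_{z_j}]P_{\mathcal M}-(B_i')^*B_j'$, and by Theorem \ref{thm:std} everything comes down to showing $B_i'\in\mathcal L^2$. The problem is that this last point is precisely where the entire difficulty of the theorem sits, and your proposal does not prove it. Writing $B_i'=P_{\mathcal K}SP_{\mathcal M}-P_{\mathcal K}K_iP_{\mathcal M}$ via Corollary \ref{cor:isometry} only converts the question ``is $P_{\mathcal K}S_{z_i}P_{\mathcal M}$ Hilbert--Schmidt?'' into the equivalent question ``is $P_{\mathcal K}SP_{\mathcal M}$ Hilbert--Schmidt?'', since $K_i\in\mathcal L^{(1,\infty)}\subset\mathcal L^2$; no progress is made on the essential estimate. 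Moreover, Proposition \ref{prop:shift} cannot supply it: that proposition controls the blocks of $S_{z_i}$ with respect to the decomposition $\mathcal N=\bigoplus_{g\in\mathcal B}(\ker r_g)^\bot$, whereas what you need is decay of the off-diagonal block with respect to the completely different splitting $\mathcal M\oplus\mathcal K$, which is determined by how $I$ sits between $J^N$ and $J$ and is not aligned with the $\{H_n\}$ grading (as you yourself note). Even after observing that $S$ raises polynomial degree by one and that $\mathcal M,\mathcal K$ are degree-graded, you would still need the norms of the finite-rank blocks $P_{\mathcal K_{k+1}}SP_{\mathcal M_k}$ to be square-summable in $k$, and nothing in your sketch produces such a bound. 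So the proposal has a genuine gap, acknowledged but not closed.

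The paper closes this gap by a completely different mechanism, and it is worth knowing what it is: one first shows that $S_{z_i}$ is $\binom{N+d-2}{d-1}$-multicyclic on $\mathcal N$ (an elementary lemma using the generators $w^\alpha$), so its restriction $S_i^{(1)}$ to the invariant subspace $\mathcal K$ is finitely multicyclic; since the $(1,1)$ block identity exhibits $[S_i^{(1)*},S_i^{(1)}]$ as a positive operator plus a trace-class one, Voiculescu's generalization of the Berger--Shaw theorem yields $[S_i^{(1)*},S_i^{(1)}]\in\mathcal L^1$ with a trace bound. Comparing with the $(1,1)$ block $[S_i^{(1)*},S_i^{(1)}]-C_iC_i^*$ of the trace-class commutator $[S_{z_i}^*,S_{z_i}]$ then forces $C_iC_i^*\in\mathcal L^1$, i.e.\ $C_i=B_i'\in\mathcal L^2$, after which your final identity finishes the proof exactly as you intended. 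If you want to complete your write-up, replace your ``technical obstacle'' paragraph by this multicyclicity/Berger--Shaw argument (or supply an independent proof of the $\mathcal L^2$ estimate, which at present you do not have).
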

\begin{proof}
    Let $I=I_1\cap I_2$ be the primary decomposition, where $Z(I_1)=V$ and $Z(I_2)=\{0\}$. Let $\{f_1,\ldots,f_n\}$ be a set of generators of $I_1$. By Hilbert's Nullstellensatz, $J=\sqrt{I_1}$ and therefore for each $i=2,\ldots,d$ there is an positive integer $n_i$ such that $w_i^{n_i}\in I_2$. Let $N_1=\sum_{i=2}^d n_i-d+2$ then $J^{N_2}\subset I_1$. Similarly we can find $N_2\in\mathbb{N}$ such that $m^{N_2}\subset I_2$ where $m\subset\poly$ is the maximal ideal generated by $w_1,\ldots,w_d$. Set $N=\max\{N_1,N_2\}$ then $J^N\subset I_1\cap I_2=I$. Denote by $\mathcal{N}=[J^N]^\bot$ then $\mathcal{M}\subset\mathcal{N}$.

    For $i=1,\ldots,d$ we decomposite $S_{z_i}$ with respect to $P_\mathcal{N}=P_{\mathcal{N}\ominus\mathcal{M}}\oplus P_\mathcal{M}$ as follows,
    $$S_{z_i}=\left(
                \begin{array}{cc}
                  S_i^{(1)} & C_i \\
                  0 & S_i^{(2)} \\
                \end{array}
              \right).
    $$
    By the previous lemma, $S_{z_i}$ is $\binom{N+d-2}{d-1}$-multicyclic, and therefore $S_i^{(1)}$ is at most $\binom{N+d-2}{d-1}$-multicyclic on $P_\mathcal{N}\ominus P_\mathcal{M}$. Since
    \begin{eqnarray*}
        [S_i^{(1)*},S_i^{(1)}]&=&[P_{\mathcal{N}\ominus\mathcal{M}}S_{z_i}^*,S_{z_i}P_{\mathcal{N}\ominus\mathcal{M}}]\\
        &=&P_{\mathcal{N}\ominus\mathcal{M}}[S_{z_i}^*,S_{z_i}]P_{\mathcal{N}\ominus\mathcal{M}}+P_{\mathcal{N}\ominus\mathcal{M}}S_{z_i}P_\mathcal{M}S_{z_i}^*P_{\mathcal{N}\ominus\mathcal{M}}
    \end{eqnarray*}
    being the sum of a positive operator and a trace class operator, a generalization of the Berger-Shaw Theorem by Voiculescu\cite[pp. 155, Theorem 2.7 ]{Con} implies $[S_i^{(1)*},S_i^{(1)}]\in\mathcal{L}^1$ and
    $$\tr[S_i^{(1)*},S_i^{(1)}]\leq\binom{N+d-2}{d-1}.$$
    Since
    $$[S_{z_i}^*,S_{z_i}]=\left(
                              \begin{array}{cc}
                                [S_i^{(1)*},S_i^{(1)}]-C_iC_i^* & S_i^{(1)*}C_i-C_iS_i^{(2)*} \\
                                C_i^*S_i^{(1)*}-S_i^{(2)*}C_i & [S_i^{(2)*},S_i^{(2)}]+C_i^*C_i \\
                              \end{array}
                            \right)\in\mathcal{L}^1,$$
    both $[S_i^{(1)*},S_i^{(1)}]-C_iC_i^*$ and $[S_i^{(2)*},S_i^{(2)}]+C_i^*C_i$ are trace class operators. Therefore $C_i\in\mathcal{L}^2$ and $[S_i^{(2)*},S_i^{(2)*}]\in\mathcal{L}^1$, completing the proof of the theorem.
\end{proof}
\begin{rem}
    The proof of Theorem \ref{thm:primary} is also valid for the homogeneous weighted Bergman quotient module case.
\end{rem}
\begin{cor}
    Suppose $I\subset\poly$ is a homogeneous ideal with $Z(I)=V$ and denote $\mathcal{M}=[I]^\bot$. Then $P_\mathcal{M}^\bot M_{z_i}P_\mathcal{M}\in\mathcal{L}^{(2,\infty)}$ and $[M_{z_i}^*,M_{z_i}]P_\mathcal{M}\in\mathcal{L}^{(2,\infty)}$ for $1\leq i\leq d$.
\end{cor}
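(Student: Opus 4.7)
The plan is to leverage the inclusion $\mathcal M\subset\mathcal N=[J^N]^\bot$ together with Corollary~\ref{cor:upright} and the block-triangular decomposition of $S_{z_i}$ used in the proof of Theorem~\ref{thm:primary}. For the first assertion, I would decompose $P_\mathcal M^\bot=P_\mathcal N^\bot+P_{\mathcal N\ominus\mathcal M}$ and use $P_\mathcal M=P_\mathcal N P_\mathcal M$ to obtain
$$P_\mathcal M^\bot M_{z_i}P_\mathcal M=(P_\mathcal N^\bot M_{z_i}P_\mathcal N)P_\mathcal M+P_{\mathcal N\ominus\mathcal M}M_{z_i}P_\mathcal M.$$
Corollary~\ref{cor:upright} places the first summand in $\mathcal L^{(2,\infty)}$ (which is a two-sided ideal), while the second summand is exactly the off-diagonal entry $C_i$ identified in the proof of Theorem~\ref{thm:primary} and shown there to lie in $\mathcal L^2\subset\mathcal L^{(2,\infty)}$.

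For the second assertion, I would first exploit that $\mathcal M^\bot=[I]$ is $M_{z_i}$-invariant, so that $P_\mathcal M^\bot M_{z_i}^*P_\mathcal M=0$ and hence $M_{z_i}^*P_\mathcal M=S_{z_i}^*$. Since $M_{z_i}$ is an isometry on $H^2(\mathbb D^d)$, one has $[M_{z_i}^*,M_{z_i}]=I-M_{z_i}M_{z_i}^*$, and writing $M_{z_i}P_\mathcal M=S_{z_i}+P_\mathcal M^\bot M_{z_i}P_\mathcal M$ together with $P_\mathcal M S_{z_i}^*=S_{z_i}^*$ yields
$$[M_{z_i}^*,M_{z_i}]P_\mathcal M=P_\mathcal M-M_{z_i}S_{z_i}^*=P_\mathcal M-S_{z_i}S_{z_i}^*-P_\mathcal M^\bot M_{z_i}P_\mathcal M\cdot M_{z_i}^*P_\mathcal M.$$
From $M_{z_i}^*M_{z_i}=I$ one reads off $P_\mathcal M-S_{z_i}^*S_{z_i}=M_{z_i}^*P_\mathcal M^\bot M_{z_i}P_\mathcal M$, so that
$$[M_{z_i}^*,M_{z_i}]P_\mathcal M=M_{z_i}^*P_\mathcal M^\bot M_{z_i}P_\mathcal M+[S_{z_i}^*,S_{z_i}]-P_\mathcal M^\bot M_{z_i}P_\mathcal M\cdot M_{z_i}^*P_\mathcal M.$$
The outer two terms lie in $\mathcal L^{(2,\infty)}$ by the first assertion and the ideal property, while $[S_{z_i}^*,S_{z_i}]\in\mathcal L^{(1,\infty)}\subset\mathcal L^{(2,\infty)}$ by Theorem~\ref{thm:primary}, so the sum is again in $\mathcal L^{(2,\infty)}$.

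There is no substantive obstacle: the corollary is essentially a bookkeeping exercise once Theorem~\ref{thm:primary} and Corollary~\ref{cor:upright} are available. The only real content is the observation that passing from the ambient quotient $\mathcal N=[J^N]^\bot$ to the smaller quotient $\mathcal M$ is mediated by the Hilbert--Schmidt off-diagonal block $C_i$ already extracted in the proof of Theorem~\ref{thm:primary}, so every error term introduced by restricting to $\mathcal M$ is absorbed into $\mathcal L^2\subset\mathcal L^{(2,\infty)}$.
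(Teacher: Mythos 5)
Your argument is correct, and your first assertion is proved exactly as in the paper: split $P_\mathcal{M}^\bot=P_\mathcal{N}^\bot+P_{\mathcal{N}\ominus\mathcal{M}}$, use $P_\mathcal{M}=P_\mathcal{N}P_\mathcal{M}$, invoke Corollary \ref{cor:upright} for the term through $P_\mathcal{N}^\bot$, and identify $P_{\mathcal{N}\ominus\mathcal{M}}M_{z_i}P_\mathcal{M}$ with the Hilbert--Schmidt block $C_i$ from the proof of Theorem \ref{thm:primary}. For the second assertion you take a slightly different route. The paper only establishes the sandwiched estimate
$$P_\mathcal{M}[M_{z_i}^*,M_{z_i}]P_\mathcal{M}=[M_{z_i}^*P_\mathcal{M},P_\mathcal{M}M_{z_i}]+P_\mathcal{M}M_{z_i}^*P_\mathcal{M}^\bot M_{z_i}P_\mathcal{M}\in\mathcal{L}^{(1,\infty)},$$
and then uses that $[M_{z_i}^*,M_{z_i}]=I-M_{z_i}M_{z_i}^*$ is a projection $Q$, so that $P_\mathcal{M}QP_\mathcal{M}=(QP_\mathcal{M})^*(QP_\mathcal{M})$ and the singular values of $QP_\mathcal{M}$ are square roots of those of $P_\mathcal{M}QP_\mathcal{M}$, giving $QP_\mathcal{M}\in\mathcal{L}^{(2,\infty)}$. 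You instead expand $[M_{z_i}^*,M_{z_i}]P_\mathcal{M}$ directly, using the $M_{z_i}$-invariance of $[I]$ (so $M_{z_i}^*P_\mathcal{M}=P_\mathcal{M}M_{z_i}^*P_\mathcal{M}$) and the identity $P_\mathcal{M}-S_{z_i}^*S_{z_i}=M_{z_i}^*P_\mathcal{M}^\bot M_{z_i}P_\mathcal{M}$, writing the operator as $[S_{z_i}^*,S_{z_i}]$ plus two terms that are products of a bounded operator with $P_\mathcal{M}^\bot M_{z_i}P_\mathcal{M}\in\mathcal{L}^{(2,\infty)}$; the ideal property and Theorem \ref{thm:primary} (which gives $[S_{z_i}^*,S_{z_i}]\in\mathcal{L}^1\subset\mathcal{L}^{(2,\infty)}$) finish the proof. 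Both arguments are valid and of comparable length; your expansion avoids the square-root/projection trick and exhibits the one-sided operator explicitly, while the paper's version has the mild advantage of producing along the way the sandwiched $\mathcal{L}^{(1,\infty)}$ statement in the form that reappears in the proof of Theorem \ref{thm:main}.
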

\begin{proof}
    Let $\mathcal{N}$ be as in the proof of Theorem \ref{thm:primary}, then $C=P_{\mathcal{N}\ominus\mathcal{M}}M_{z_i}P_\mathcal{M}\in\mathcal{L}^2$. By Theorem \ref{thm:primary} and corollary \ref{cor:upright} we have $[M_{z_i}^*P_\mathcal{M},P_\mathcal{M}M_{z_j}]\in\mathcal{L}^1$ and $P_\mathcal{N}^\bot M_{z_i}P_\mathcal{N}\in\mathcal{L}^{(2,\infty)}$. Then we get
    $$P_\mathcal{M}^\bot M_{z_i}P_\mathcal{M}=P_{\mathcal{N}\ominus\mathcal{M}}M_{z_i}P_\mathcal{M}+P_\mathcal{N}^\bot M_{z_i}P_\mathcal{M}\in\mathcal{L}^{(2,\infty)}$$
    and
    $$P_\mathcal{M}[M_{z_i}^*,M_{z_i}]P_\mathcal{M}=[M_{z_i}^*P_\mathcal{M},P_\mathcal{M}M_{z_i}]+P_\mathcal{M}M_{z_i}^*P_\mathcal{M}^\bot M_{z_i}P_\mathcal{M}\in\mathcal{L}^{(1,\infty)}.$$
    Since $[M_{z_i}^*,M_{z_i}]$ is an projection, we have $[M_{z_i}^*,M_{z_i}]P_\mathcal{M}\in\mathcal{L}^{(2,\infty)}$.
\end{proof}
Similarly we can prove the following theorem.
\begin{thm}\label{thm:singlevariety}
    Let $V_\theta$ be a simple homogenous distinguished variety of $\mathbb{D}^d$, and suppose $\mathcal{M}$ is a submodule of $H^2(\mathbb{D}^d)$ such that $Z(\mathcal{M})=V_\theta$, then the quotient module $\mathcal{M}^\perp$ is $1$-essentially normal. Moveover, $P_\mathcal{M}^\bot M_{z_i}P_\mathcal{M}\in\mathcal{L}^{(2,\infty)}$ and $[M_{z_i}^*,M_{z_i}]P_\mathcal{M}\in\mathcal{L}^{(2,\infty)}$ for $1\leq i\leq d$.
\end{thm}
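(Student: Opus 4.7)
The plan is to follow the proof of Theorem \ref{thm:primary} almost verbatim, taking the $1$-essentially normal ambient quotient module $\mathcal{N}_\theta=[J_\theta^N]^\bot$ from Theorem \ref{thm:std} (combined with Remark \ref{rem:shif} to handle general $\theta$) as the envelope, and running the Berger--Shaw--Voiculescu block decomposition argument on the containment $\mathcal{M}^\bot\subset\mathcal{N}_\theta$. After unitarily conjugating by $U_\theta$ of Remark \ref{rem:shif} we may assume $\theta=(1,\ldots,1)$ and $J_\theta=J=(w_2,\ldots,w_d)$.

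The first step is to produce an integer $N\geq 1$ with $[J^N]\subset\mathcal{M}$, equivalently $\mathcal{M}^\bot\subset\mathcal{N}_\theta$. Since $\mathcal{M}$ is $\mathbb{C}[z]$-invariant and its common zero set equals the one-dimensional affine variety $V_\theta$, one extracts from $\mathcal{M}$ a polynomial subideal $I_0$ still cutting out $V_\theta$: using that $V_\theta$ is defined algebraically by the $d-1$ linear forms $w_2,\ldots,w_d$ and that polynomial multiples of any element of $\mathcal{M}$ remain in $\mathcal{M}$, one produces enough polynomial elements of $\mathcal{M}$ so that $Z(I_0)=V_\theta$. Hilbert's Nullstellensatz then yields $\sqrt{I_0}=J$, so for each $i=2,\ldots,d$ some power $w_i^{n_i}$ lies in $I_0$; choosing $N=\sum_{i=2}^d n_i-d+2$ gives $J^N\subset I_0\subset\mathcal{M}$, whence $[J^N]\subset\mathcal{M}$ by closedness.

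With $\mathcal{M}^\bot\subset\mathcal{N}_\theta$ in hand, decompose the compression $S_{z_i}$ of $M_{z_i}$ on $\mathcal{N}_\theta$ with respect to $\mathcal{N}_\theta=(\mathcal{N}_\theta\ominus\mathcal{M}^\bot)\oplus\mathcal{M}^\bot$ as
\[
S_{z_i}=\begin{pmatrix}S_i^{(1)} & C_i\\ 0 & S_i^{(2)}\end{pmatrix},
\]
so that $S_i^{(2)}$ is the compression of $M_{z_i}$ onto $\mathcal{M}^\bot$. The multicyclicity lemma preceding Theorem \ref{thm:primary} applied to $\mathcal{N}_\theta$ shows $S_{z_i}|_{\mathcal{N}_\theta}$ is $\binom{N+d-2}{d-1}$-multicyclic, hence $S_i^{(1)}$ is at most so. The identity
\[
[S_i^{(1)*},S_i^{(1)}]=P_{\mathcal{N}_\theta\ominus\mathcal{M}^\bot}[S_{z_i}^*,S_{z_i}]P_{\mathcal{N}_\theta\ominus\mathcal{M}^\bot}+P_{\mathcal{N}_\theta\ominus\mathcal{M}^\bot}S_{z_i}P_{\mathcal{M}^\bot}S_{z_i}^*P_{\mathcal{N}_\theta\ominus\mathcal{M}^\bot}
\]
exhibits it as the sum of a trace class operator (Theorem \ref{thm:std}) and a positive operator, so the Voiculescu generalization of the Berger--Shaw theorem gives $[S_i^{(1)*},S_i^{(1)}]\in\mathcal{L}^1$. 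Comparing the diagonal blocks of the trace class commutator $[S_{z_i}^*,S_{z_i}]$ on $\mathcal{N}_\theta$ then forces $C_i\in\mathcal{L}^2$ and $[S_i^{(2)*},S_i^{(2)}]\in\mathcal{L}^1$, which is $1$-essential normality of $\mathcal{M}^\bot$. The $\mathcal{L}^{(2,\infty)}$ conclusions follow by combining Corollary \ref{cor:upright} (applied to $\mathcal{N}_\theta$) with $C_i\in\mathcal{L}^2$, exactly as in the corollary proved just after Theorem \ref{thm:primary}.

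The hard part is step one: translating the set-theoretic hypothesis $Z(\mathcal{M})=V_\theta$ into the algebraic inclusion $J^N\subset\mathcal{M}$. A submodule of $H^2(\mathbb{D}^d)$ need not a priori contain any nonzero polynomial, so the extraction of $I_0$ is delicate and must rely on $\mathbb{C}[z]$-invariance of $\mathcal{M}$ together with the algebraic, one-dimensional nature of $V_\theta$; once this is in place, everything else is a direct transcription of the argument of Theorem \ref{thm:primary}.
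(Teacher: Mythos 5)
Your overall architecture is exactly the paper's intended one: the paper proves this theorem only by the remark ``Similarly we can prove the following theorem,'' meaning conjugate by $U_\theta$ and rerun the proof of Theorem \ref{thm:primary} and its corollary with $\mathcal{N}_\theta=[J_\theta^N]^\bot$ as the envelope, and your transcription of that part (multicyclicity, the Voiculescu--Berger--Shaw bound for $S_i^{(1)}$, the block comparison giving $C_i\in\mathcal{L}^2$ and $[S_i^{(2)*},S_i^{(2)}]\in\mathcal{L}^1$, then Corollary \ref{cor:upright} for the $\mathcal{L}^{(2,\infty)}$ claims) is faithful. The genuine gap is the step you yourself flag as the hard part: passing from the set-theoretic hypothesis $Z(\mathcal{M})=V_\theta$ to the algebraic inclusion $[J_\theta^N]\subset\mathcal{M}$. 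Your proposed extraction of a polynomial subideal $I_0\subset\mathcal{M}$ with $Z(I_0)=V_\theta$ is not carried out and cannot work in the stated generality: a submodule with zero set $V_\theta$ need not contain any nonzero polynomial. Concretely (after rotating so $\theta=(1,\ldots,1)$), let $\phi=\phi(z_1)$ be a singular inner function and $\mathcal{M}=\phi\,[J]$; multiplication by $\phi$ is isometric on $H^2(\mathbb{D}^d)$, so $\mathcal{M}$ is a closed submodule, and since $\phi$ is zero-free on $\mathbb{D}$ we get $Z(\mathcal{M})=Z([J])=V_\theta\cap\mathbb{D}^d$. Yet every element of $\mathcal{M}$ carries the singular inner factor $\phi$ on each slice, so $\mathcal{M}$ contains no nonzero polynomial; in particular $w_2^N\notin\mathcal{M}$, hence $[J^N]\not\subset\mathcal{M}$ for every $N$, the Nullstellensatz has nothing to act on, and the envelope $\mathcal{M}^\bot\subset\mathcal{N}_\theta$ on which your entire argument rests is unavailable.

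In the only place the paper uses this theorem (the proof of Theorem \ref{thm:main}), the submodule is $[I_k]$ for a homogeneous ideal $I_k$ with variety $V_{\theta_k}$; in that case your argument degenerates to: conjugate by $U_{\theta_k}$, which carries $[I_k]$ to the closure of the homogeneous ideal $I_k\circ L_{\theta_k}$ with variety $V_{(1,\ldots,1)}$, and quote Theorem \ref{thm:primary} and its corollary verbatim --- no extraction step is needed, and this is surely what the paper means by ``similarly.'' So your proposal is correct in the generality actually needed, but the bridge you sketch from an arbitrary submodule with $Z(\mathcal{M})=V_\theta$ to $[J_\theta^N]\subset\mathcal{M}$ is a real hole; to close it you must either add a hypothesis (e.g.\ $\mathcal{M}$ homogeneous, or generated by polynomials) or find an argument that does not go through containment in $[J_\theta^N]^\bot$.
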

\section{Essential normality of homogenous quotient modules}
\begin{prop}\label{prop:varietydim}
    If $I\subset\poly$ is a homogeneous ideal satisfying $Z(I)\cap\partial\mathbb{D}^d\subset\mathbb{T}^d$, then $Z(I)$ is of complex dimension $1$.
\end{prop}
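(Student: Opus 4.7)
The plan is to argue by contradiction: supposing $Z(I)$ has an irreducible component $W$ of complex dimension at least $2$, I will show that $W$ must in fact be contained in a single complex line, which contradicts the assumption.

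The first step uses only the homogeneity of $Z(I)$ together with the hypothesis $Z(I)\cap\partial\mathbb{D}^d\subset\mathbb{T}^d$ to show that every nonzero point $p=(p_1,\ldots,p_d)\in Z(I)$ satisfies $|p_1|=|p_2|=\cdots=|p_d|$. Indeed, setting $M=\max_i|p_i|>0$, the rescaled point $p/M$ still lies in $Z(I)$ by homogeneity, and by construction lies on $\partial\mathbb{D}^d$; the hypothesis then forces $p/M\in\mathbb{T}^d$, so $|p_i|=M$ for every $i$. In particular, every coordinate of $p$ is nonzero.

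For the second step, pick an irreducible component $W$ of $Z(I)$ with $\dim_{\mathbb{C}}W\geq 2$. By step one, $z_j$ does not vanish on $W\setminus\{0\}$, so each ratio $z_i/z_j$ is a holomorphic function on $W\setminus\{0\}$ of constant modulus $1$. Since $W$ is irreducible and its smooth locus is a connected complex manifold of dimension at least $2$, the maximum modulus principle for analytic varieties forces $z_i/z_j$ to be a unimodular constant $\theta_{ij}$ on $W$. But then $W\subseteq\{(\theta_{11}z,\theta_{21}z,\ldots,\theta_{d1}z):z\in\mathbb{C}\}$, a complex line, contradicting $\dim_{\mathbb{C}}W\geq 2$. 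Hence $\dim_{\mathbb{C}}Z(I)\leq 1$, and since $Z(I)$ is a nonempty homogeneous variety meeting $\mathbb{T}^d$ under the standing assumption, it contains the full complex line through any boundary point, giving the reverse bound $\dim_{\mathbb{C}}Z(I)\geq 1$.

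The only genuine subtlety is the maximum principle on a possibly singular analytic set; this is handled by restricting to $W_{\mathrm{reg}}\setminus\{0\}$, where the classical principle applies, and then propagating to $W$ by continuity. I expect no further obstacles: the equal-modulus observation is the one clever trick, and everything else is routine complex analytic geometry.
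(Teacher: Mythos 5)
Your proof is correct, and it takes a genuinely different (and somewhat cleaner) route than the paper's, while resting on the same key observation. Your step one --- rescaling a nonzero $p\in Z(I)$ by homogeneity to a point of $\partial\mathbb{D}^d$, which the hypothesis forces into $\mathbb{T}^d$, so that $|p_1|=\cdots=|p_d|\neq 0$ --- is exactly what the paper uses (implicitly) to assert that the map $\Phi(z)=\left(z_1/z_d,\ldots,z_{d-1}/z_d\right)$ sends $Z(I)$ into $\mathbb{T}^{d-1}$. From there the arguments diverge: the paper studies the image $\Phi(Z(I))$ as an analytic subvariety of $\mathbb{T}^{d-1}$ and rules out positive dimension via the implicit function theorem, since a local parametrization would embed an open piece of $\mathbb{C}^n$ into the totally real torus; you instead stay on an irreducible component $W\subset\mathbb{C}^d$ and apply the maximum modulus principle to the unimodular ratios $z_i/z_j$ on the connected manifold $W_{\mathrm{reg}}\setminus\{0\}$, forcing these ratios to be constant and hence $W$ to lie in a complex line. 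Your version avoids the paper's assertion that an $n$-dimensional component of $\Phi(Z(I))$ is cut out by exactly $d-n$ of the polynomials $g_i$ (a set-theoretic complete intersection claim that as stated needs justification), and, if run for components of dimension $\geq 1$ rather than only $\geq 2$, it yields the stronger conclusion actually used later in the paper: every component of $Z(I)$ other than the origin is a simple distinguished line $V_\theta$ with $\theta\in\mathbb{T}^d$. Both your proof and the paper's share the same implicit nondegeneracy assumption that $Z(I)\neq\{0\}$ (otherwise the hypothesis holds vacuously and the dimension is $0$), so your closing lower bound $\dim_{\mathbb{C}}Z(I)\geq 1$ is on the same footing as the paper's conclusion.
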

\begin{proof}
    Define the map $$\Phi:\mathbb{C}^d\backslash\{0\}\to\mathbb{C}^{d-1}\backslash\{0\},~z\mapsto\left(\frac{z_1}{z_d},\ldots,\frac{z_{d-1}}{z_d}\right).$$
    Then $\Phi$ is holomorphic, and maps $Z(I)$ into $\mathbb{T}^{d-1}$. Suppose homogeneous polynomials $f_1,\ldots,f_n$ generate $I$, and define polynomials $g_i\in\mathbb{C}[z_1,\ldots,z_{d-1}]$ by $g(z_1,\ldots,z_{d-1})=f(z_1,\ldots,z_{d-1},1)$. Obviously for $z\in\mathbb{C}^d\backslash\{0\}$, $f_i(z)=0$ if and only if $g_i(\Phi(z))=f_i(\Phi(z),1)=0$. Conversely if $g_i(0)=0$ for each $i$, then we have $f_i(0,\ldots,0,1)=0$, and $Z(I)$ must contain $\{(0,\ldots,0,z)\in\mathbb{C}^d:z\in\mathbb{C}\}$, contracting to the assumption. By now we see that $\Phi(Z(I))=\bigcap_iZ(g_i)$ is an analytic variety contained in $\mathbb{T}^{d-1}$.

    Suppose $V$ is an irreducible component of $\Phi(Z(I))$, and $n=\dim_\mathbb{C}V$. Then there are $d-n$ polynomials $g_1,\ldots,g_{d-n}$ such that $V=Z(g_1,\ldots,g_{d-n})$. Choose a nonsingular point $z_0\in V$, namely the matrix $\left(\frac{\partial g_i}{\partial z_j}(z_0)\right)_{i,j}$ is of rank $d-n$. If $n>0$, by implicit function theorem, there is an analytic mapping $\varphi:\mathbb{C}^n\to\mathbb{C}^d$ satisfying $(g_1,\ldots,g_{d-n})\circ\varphi(z)=z$ on some neighbourhood $U$ of $0$. Therefore $\varphi$ is an analytic embedding from $U$ to $\Phi(Z(I))\subset\mathbb{T}^{d-1}$, which is impossible. Hence $n$ must be zero, and each component of $Z(I)$ must be of complex dimension $1$.
\end{proof}
\begin{thm}\label{thm:main}
    If $I\subset\poly$ is a homogeneous ideal satisfying $Z(I)\cap\partial\mathbb{D}^d\subset\mathbb{T}^d$, then the quotient module $\mathcal{M}=[I]^\bot$ is $(1,\infty)$-essentially normal.
\end{thm}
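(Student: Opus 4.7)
The plan is to reduce the general case to the single‑distinguished‑variety case handled in Section~2, via primary decomposition, and then apply a Berger--Shaw comparison as in the proof of Theorem~\ref{thm:primary}.

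First, apply Proposition~\ref{prop:varietydim} to conclude $\dim_{\mathbb{C}} Z(I)=1$. Since $I$ is homogeneous, $Z(I)$ is a finite union of complex lines through $0$; the hypothesis $Z(I)\cap\partial\mathbb{D}^d\subset\mathbb{T}^d$ forces each such line to meet the closed polydisc only on the distinguished boundary, so every irreducible component is a simple distinguished variety $V_{\theta_j}$ with $\theta_j\in\mathbb{T}^d$. Thus $Z(I)=V_{\theta_1}\cup\cdots\cup V_{\theta_k}$. Taking a primary decomposition $I=I_1\cap\cdots\cap I_k\cap I_0$ with $\sqrt{I_j}=J_{\theta_j}$ for $j\geq1$ and $\sqrt{I_0}=m$ (possibly trivial), Hilbert's Nullstellensatz gives an integer $N$ with $J_{\theta_j}^N\subset I_j$ for all $j$ and $m^N\subset I_0$. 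Setting $\tilde I:=\bigcap_{j=1}^kJ_{\theta_j}^N\cap m^N$, we have $\tilde I\subset I$ and hence $\mathcal{M}\subset\tilde{\mathcal{N}}:=[\tilde I]^\bot$.

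The main obstacle is to prove that $\tilde{\mathcal{N}}$ itself is $(1,\infty)$-essentially normal. Because all the ideals involved are homogeneous, an easy degree-wise argument gives $[\tilde I]=[J_{\theta_1}^N]\cap\cdots\cap[J_{\theta_k}^N]\cap[m^N]$, and hence $\tilde{\mathcal{N}}=\mathcal{N}_{\theta_1}+\cdots+\mathcal{N}_{\theta_k}+[m^N]^\bot$ as a closed sum, where $\mathcal{N}_{\theta_j}:=[J_{\theta_j}^N]^\bot$. The last summand is finite-dimensional, and each $\mathcal{N}_{\theta_j}$ is $(1,\infty)$-essentially normal by Theorem~\ref{thm:std} together with Remark~\ref{rem:shif} (to rotate from $(1,\ldots,1)$ to $\theta_j$). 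To pass from individual pieces to their closed sum, I would extend the restriction-map machinery of Section~2 to the multi-line setting: introduce restriction maps $r^{(j)}_g$ for each line $V_{\theta_j}$, decompose $\tilde{\mathcal{N}}$ into blocks indexed by pairs $(j,g)$, and reproduce the block calculation of Propositions~\ref{prop:multiplier} and~\ref{prop:shift} both for the intra-line diagonal pieces and for the inter-line cross-terms. The two structural inputs are: (i) by Corollary~\ref{cor:isometry} and Remark~\ref{rem:shif}, each $S_{z_i}|_{\mathcal{N}_{\theta_j}}$ differs from a fixed isometry $S_{\theta_j}$ (depending on $j$ but not $i$) by an operator in $\mathcal{L}^{(1,\infty)}$; (ii) the pairwise intersections $\mathcal{N}_{\theta_i}\cap\mathcal{N}_{\theta_{i'}}=[J_{\theta_i}^N+J_{\theta_{i'}}^N]^\bot$ are finite-dimensional, because $V_{\theta_i}\cap V_{\theta_{i'}}=\{0\}$ for $i\neq i'$ makes $J_{\theta_i}^N+J_{\theta_{i'}}^N$ contain some power of $m$. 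Together these should place the cross-terms between different $\mathcal{N}_{\theta_j}$'s in $\mathcal{L}^{(2,\infty)}$ and hence yield $(1,\infty)$-essential normality of $\tilde{\mathcal{N}}$ as in the proof of Theorem~\ref{thm:std}.

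Once $\tilde{\mathcal{N}}$ is known to be $(1,\infty)$-essentially normal and $\mathcal{M}\subset\tilde{\mathcal{N}}$, the argument of Theorem~\ref{thm:primary} applies essentially verbatim. The subspace $\tilde{\mathcal{N}}\ominus\mathcal{M}=[I]\ominus[\tilde I]$ lies inside the submodule $[I]$ and is therefore invariant under each compression $S_{z_i}$, giving a block upper-triangular decomposition. The compression on $\tilde{\mathcal{N}}$ is finitely multicyclic (generators coming from the finitely many monomials $w^\alpha$ associated to each $\theta_j$, as in the multicyclicity lemma preceding Theorem~\ref{thm:primary}), hence so is its restriction to $\tilde{\mathcal{N}}\ominus\mathcal{M}$. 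Voiculescu's generalization of Berger--Shaw then forces the self-commutator of the top-left block into the trace class, and comparing matrix entries of $[S_{z_i}^*,S_{z_i}]$ gives $C_i\in\mathcal{L}^2$ and $[S_i^{(2)*},S_i^{(2)}]\in\mathcal{L}^{(1,\infty)}$, proving that $\mathcal{M}=[I]^\bot$ is $(1,\infty)$-essentially normal. The hard part throughout is the third paragraph: since the isometries $S_{\theta_j}$ are genuinely different for different $j$, there is no single shift model for all of $\tilde{\mathcal{N}}$, and the pieces must be glued carefully using the finite-dimensionality of the pairwise overlaps before the block Schatten-class estimates can be pushed through.
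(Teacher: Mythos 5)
Your reduction (Proposition \ref{prop:varietydim}, primary decomposition, $J_{\theta_j}^N\subset I_j$) matches the paper, but the crux of your argument --- the third paragraph, where you claim $(1,\infty)$-essential normality of $\tilde{\mathcal{N}}=[\bigcap_j J_{\theta_j}^N]^\bot$ by a multi-line block decomposition --- is a genuine gap, and the two facts you offer to close it do not suffice. First, the ``easy degree-wise argument'' only shows that the algebraic sum $\sum_j\mathcal{N}_{\theta_j}+[m^N]^\bot$ is dense in $\tilde{\mathcal{N}}$ (in each degree the orthocomplement of an intersection is the span of the orthocomplements); it does not show the sum is closed, which requires the angles between the degree-$k$ pieces of different $\mathcal{N}_{\theta_j}$'s to stay uniformly bounded away from zero as $k\to\infty$. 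Second, finite-dimensionality of $\mathcal{N}_{\theta_i}\cap\mathcal{N}_{\theta_{i'}}$ says nothing about the asymptotic angle between the subspaces, hence nothing about membership of the inter-line cross-terms $P_fS_{z_i}P_g$ in $\mathcal{L}^{(2,\infty)}$; what is needed is a quantitative asymptotic-orthogonality estimate in the spirit of Proposition \ref{prop:semisimple} (where the decay rate $|\langle\theta_j,\theta_i\rangle/d|^k$ is computed explicitly, and only for prime ideals), extended to all the blocks $(\ker r_g^{(j)})^\bot$ with $n<N$. Moreover there is no analogue of Proposition \ref{prop:deg} across different lines, so the block decomposition you index by pairs $(j,g)$ is not even an orthogonal decomposition to begin with; and the finite multicyclicity of $S_{z_i}$ on $\tilde{\mathcal{N}}$ that you invoke for the Berger--Shaw step is asserted, not proved (the paper's multicyclicity lemma is only for a single line).

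The paper avoids this multi-line shift model altogether. It works directly with $\mathcal{M}_k=[I_k]^\bot$, each already known to be $1$-essentially normal with $P_{\mathcal{M}_k}^\bot M_{z_i}P_{\mathcal{M}_k}\in\mathcal{L}^{(2,\infty)}$ and $[M_{z_i}^*,M_{z_i}]P_{\mathcal{M}_k}\in\mathcal{L}^{(2,\infty)}$ (Theorem \ref{thm:singlevariety}; this is where Berger--Shaw is used, at the single-line level, not at the end). For each $k$ it builds $\varphi_k=\prod_{j\neq k}g_{k,j}^N\in\bigcap_{j\neq k}J_j$ with $|\varphi_k|$ a nonzero constant on $V_k\cap\partial\mathbb{D}^d$, shows via the shift model of Corollary \ref{cor:isometry} that $S_{\varphi_k}'$ is Fredholm on $\mathcal{N}_k$, and deduces that $T=\sum_kS_{\varphi_k}S_{\varphi_k}^*$ is bounded below on high-degree homogeneous vectors of $\mathcal{M}$ (using $M_{\varphi_k}^*P_{\mathcal{M}_j}=0$ for $j\neq k$ to decouple the components). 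Fredholmness of $T$ and of $T'=\sum_kP_{\mathcal{M}_k}S_{\varphi_k}S_{\varphi_k}^*P_{\mathcal{M}_k}$ yields that $\sum_k\mathcal{M}_k=\mathcal{M}$, hence the summation operator $S:\bigoplus_k\mathcal{M}_k\to\mathcal{M}$ is surjective and $P_\mathcal{M}\leq c^2\sum_kP_{\mathcal{M}_k}$; inverting $\sum_kP_{\mathcal{M}_k}$ on $\mathcal{M}$ gives $P_\mathcal{M}=A\sum_kP_{\mathcal{M}_k}$ with $A$ bounded, and the $\mathcal{L}^{(2,\infty)}$ estimates transfer from the $\mathcal{M}_k$'s to $\mathcal{M}$, giving $[M_{z_i}^*P_\mathcal{M},P_\mathcal{M}M_{z_i}]\in\mathcal{L}^{(1,\infty)}$. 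In short, the uniform-angle/gluing problem you flag as ``the hard part'' is exactly what the paper's Fredholm-positivity argument with the auxiliary multipliers $\varphi_k$ is designed to solve; without an argument of comparable strength (or an explicit cross-block decay estimate), your plan does not yet constitute a proof.
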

\begin{proof}
    By proposition \ref{prop:varietydim}, $\dim_\mathbb{C}Z(I)$=1. Suppose $Z(I)\cap\mathbb{D}^d=\bigcup_{k=1}^nV_k$, then $\dim_\mathbb{C}V_k=1$ and therefore $V_k$ is a simple distinguished variety for each $k$. Denote by $J_k$ the prime ideal with zero variety $V_k$ for $k=1,\ldots,n$, and $m$ the maximal ideal at $\{0\}$. Let $I=\bigcap_{k=0}^nI_k$ be the primary decomposition, with $Z(I_0)=\{0\}$ and $Z(I_k)=V_k$. Since $I_0$ is of finite codimension, we may assume $I=I_1\cap\ldots\cap I_n$.

    There is a positive integer $N$ such that $J_k^N\subset I_k$ for each $k$. When $1\leq j,k\leq n$ and $j\neq k$, we can choose a linear polynomial $g_{k,j}\in J_j$, such that $g_{k,j}(z)\neq0$ whenever $z\in V_k$. Since $g_{k,j}$ is homogeneous and $V_k$ is a simple distinguished variety, $|g_{k,j}(z)|$ is constant on $V_k\cap\partial\mathbb{D}^d$. Denote by $\varphi_k=\Pi_{j\neq k}g_{k,j}^N$, then $\varphi_k\in\bigcap_{j\neq k}J_j$ and $|\varphi_k(z)|$ is a nonzero constant on $V_k\cap\partial\mathbb{D}^d$.

    Denote by $\mathcal{M}_k=[I_k]^\bot,\mathcal{N}_k=[J_k^N]^\bot$, and $S_{\varphi_k}'=P_{\mathcal{N}_k}M_{\varphi_k}\mid_{\mathcal{N}_k}$ be the contraction of $M_{\varphi_k}$ on the quotient module $\mathcal{N}_k$. We want to represent $P_{\mathcal{M}}$ by the $P_{\mathcal{M}_k}$'s, and then induce the essential normality of $\mathcal{M}$ from that of the $\mathcal{M}_k$'s.

    First we prove that $S_{\varphi_k}'$ is a Fredholm operator. Without loss of generality we may assume $V_k=V_{(1,\ldots,1)}$. By corollary \ref{cor:isometry}, there is an isometry $S\in B(\mathcal{N}_k)$ which is unitarily equivalent to a multi-shift, and some operator $K_k\in\mathcal{L}^{(1,\infty)}$ such that $S_{\varphi_k}'=\varphi(S_k,\ldots,S_k)+K_k$. Therefore $\sigma_e(S_{\varphi_k}')=\{r\varphi_k(z):z\in\partial\mathbb{D}\}=\varphi_k(V_k\cap\partial\mathbb{D}^d)$, which do not contain $\{0\}$. This implies that $S_{\varphi_k}'$ is Fredholm, and so is $S_{\varphi_k}'^*$. Since $S_{\varphi_k}'^*$ is homogeneous and $\ker S_{\varphi_k}'^*$ is of finite dimension, there is a constant $\rho_k>0$ and positive integer $N_k$ such that $||S_{\varphi_k}'^*h||\geq\rho_k||h||$ whenever $h\in\mathcal{N}_k$ and $\deg h>N_k$. Let $\rho=\min\{\rho_k:k=1,\ldots,n\}$ and $N'=\max\{N_k:k=1,\ldots,n\}$. Denote by $\mathcal{M}'$ the closed subspace of $\mathcal{M}$ spanned by homogeneous polynomials of degree greater than $N'$.

    Denote by $S_{\varphi_k}=P_\mathcal{M}M_{\varphi_k}\mid_\mathcal{M}$ and $T=\sum_{k=1}^nS_{\varphi_k}S_{\varphi_k}^*$. For homogeneous $f\in\mathcal{M}'$, denote by
    $$L=\{g\in H^2(\mathbb{D}^d):g\mbox{ is homogeneous, }\deg g=\deg{f}\}.$$
    We have
    $$f\in\mathcal{M}'\cap L\subset\mathcal{M}\cap L=\left(\bigcap_{k=1}^nI_k\right)^\bot\cap L=L\ominus\left(\bigcap_{k=1}^nI_k\cap L\right)=\bigvee_{k=1}^n L\ominus(I_k\cap L)=L\cap\bigvee_{k=1}^n[I_k]^\bot.$$
    Then we can write $f=\sum_{k=1}^nf_k$ where each $f_k\in[I_k]^\bot=\mathcal{M}_k$ is homogeneous and of the same degree as $f$. Then
    $$||S_{\varphi_k}^*f_k||=||S_{\varphi_k}'^*f_k||\geq\rho||f_k||.$$
    When $j\neq k$, since $\varphi_k\in\bigcap_{j\neq k}J_j$ we have $M_{\varphi_k}^*P_{\mathcal{M}_j}=0$, and therefore $S_{\varphi_k}^*f_j=0$. From this we obtain $S_{\varphi_k}^*f=S_{\varphi_k}^*f_k\in\mathcal{M}_k$. Then
    \begin{eqnarray*}
        \langle Tf,f\rangle&=&\langle\sum_{k=1}^nS_{\varphi_k}S_{\varphi_k}^*f,f\rangle\\
        &=&\sum_{k=1}^n\langle S_{\varphi_k}^*f,S_{\varphi_k}^*f\rangle\\
        &=&\sum_{k=1}^n\langle S_{\varphi_k}^*f_k,S_{\varphi_k}^*f_k\rangle\\
        &\geq&\sum_{k=1}^n\rho^2||f_k||^2\\
        &\geq&\frac{\rho^2}{n}||f||^2.
    \end{eqnarray*}
    Therefore $T$ is bounded below on $\mathcal{M}'$, and must have closed range. Since $T$ keeps degree of polynomials, $T$ maps $\mathcal{M}'$ into a dense subspace of itself. Hence $\mathcal{M}'\subset T\mathcal{M}$ and $T$ is Fredholm.

    Denote by $T'=\sum_{k=1}^nP_{\mathcal{M}_k}S_{\varphi_k}S_{\varphi_k}^*P_{\mathcal{M}_k}$, then $T'\mathcal{M}\subset\bigvee_{k=1}^n\mathcal{M}_k$. By Theorem \ref{thm:singlevariety} for each $k$ we have
    $$P_{\mathcal{M}_k}S_{\varphi_k}^*-S_{\varphi_k}^*P_{\mathcal{M}_k}=P_{\mathcal{M}_k}M_{\varphi_k}^*(P_\mathcal{M}-P_{\mathcal{M}_k})\in\mathcal{L}^{(2,\infty)},$$
    and therefore
    $$T-T'=\sum_{k=1}^nS_{\varphi_k}P_{\mathcal{M}_k}S_{\varphi_k}^*-\sum_{k=1}^nP_{\mathcal{M}_k}S_{\varphi_k}S_{\varphi_k}^*P_{\mathcal{M}_k}\in\mathcal{L}^{(2,\infty)},$$
    which implies $T'$ is Fredholm since $T$ is. By $\bigvee_{k=1}^n\mathcal{M}_k\supset T'\mathcal{M}$ we conclude that $\bigvee_{k=1}^n\mathcal{M}_k$ is closed. Since $\bigvee_{k=1}^n\mathcal{M}_k$ is dense in $\mathcal{M}$, it is just $\mathcal{M}$. Define an operator $$S:\bigoplus_{k=1}^n\mathcal{M}_k\to\mathcal{M},(h_1,\ldots,h_n)\mapsto\sum_{k=1}^nh_k,$$
    then $S$ is surjective hence invertible on $(\ker S)^\bot$. Moreover,
    $$S^*:\mathcal{M}\to\bigoplus_{k=1}^n\mathcal{M}_k,h\mapsto(P_{\mathcal{M}_1}h,\ldots,P_{\mathcal{M}_n}h)$$
    is bounded below. Choose a constant $c>0$ such that $||h||<c||S^*h||$ whenever $h\in\mathcal{M}$. Then for each $h\in H^2(\mathbb{D}^d)$ we have
    \begin{eqnarray*}
        \langle P_\mathcal{M}h,h\rangle&=&||P_\mathcal{M}h||^2\\
        &<&c^2\langle S^*P_\mathcal{M}h,S^*P_\mathcal{M}h\rangle\\
        &=&c^2\sum_{k=1}^n\langle P_{\mathcal{M}_k}h,P_{\mathcal{M}_k}h\rangle\\
        &=&c^2\langle\sum_{k=1}^nP_{\mathcal{M}_k}h,h\rangle,
    \end{eqnarray*}
    which implies $P_\mathcal{M}<c^2\sum_{k=1}^nP_{\mathcal{M}_k}$. Therefore all the eigenvalues of $\sum_{k=1}^nP_{\mathcal{M}_k}$ are greater than $c^{-2}$. Let $\{e_j:j=1,2,\ldots\}$ be an complete system of normalized eigenvectors of $\sum_{k=1}^nP_{\mathcal{M}_k}$, and $\{\lambda_j:j=1,2,\ldots\}$ the associated eigenvalues, then $$\sum_{k=1}^nP_{\mathcal{M}_k}=\sum_{j=1}^\infty\lambda_je_j\otimes e_j.$$
    Let
    $$A=\sum_{j=1}^\infty\lambda_j^{-1}e_j\otimes e_j$$
    and we have $P_\mathcal{M}=A\sum_{k=1}^nP_{\mathcal{M}_k}=\sum_{k=1}^nP_{\mathcal{M}_k}A$. By Theorem \ref{thm:singlevariety}, $P_{\mathcal{M}_k}^\bot M_{z_i}P_{\mathcal{M}_k}\in\mathcal{L}^{(2,\infty)}$ and $[M_{z_i}^*,M_{z_i}]P_{\mathcal{M}_k}\in\mathcal{L}^{(2,\infty)}$ for $1\leq i\leq d$ and $1\leq k\leq n$. Therefore
    $$P_\mathcal{M}^\bot M_{z_i}P_\mathcal{M}=P_\mathcal{M}^\bot M_{z_i}\sum_{k=1}^nP_{\mathcal{M}_k}A=\sum_{k=1}^nP_\mathcal{M}^\bot P_{\mathcal{M}_k}^\bot M_{z_i}P_{\mathcal{M}_k}A\in\mathcal{L}^{(2,\infty)},$$
    and
    $$[M_{z_i}^*,M_{z_i}]P_\mathcal{M}=\sum_{k=1}^n[M_{z_i}^*,M_{z_i}]P_{\mathcal{M}_k}A\in\mathcal{L}^{(2,\infty)}.$$
    Finally
    \begin{eqnarray*}
        [M_{z_i}^*P_\mathcal{M},P_\mathcal{M}M_{z_i}]=P_\mathcal{M}[M_{z_i}^*,M_{z_i}]P_\mathcal{M}-P_\mathcal{M}M_{z_i}^\bot P_\mathcal{M}^\bot M_{z_i}P_\mathcal{M}\in\mathcal{L}^{(1,\infty)},
    \end{eqnarray*}
    completing the proof of the theorem.
\end{proof}

At the end of this section, we give the following proposition, which reveals the structure of semisimple  homogenous quotient module. Following \cite{GWp1}, two subspaces $N_1$ and $N_2$ of a Hilbert space $H$ are said to be asymptotically orthogonal if $P_{N_1}P_{N_2}$ is compact.
\begin{prop}\label{prop:semisimple}
Let $I$ be a homogenous ideal such that $Z(I)\cap \partial \mathbb D^d\subset \mathbb T^d$, $I=I_1\ldots I_n$ be its primary decomposition, with $\dim_{\mathbb{C}}Z(I_i)=1$ for $1\leq i\leq n$. Suppose $\{I_i:1\leq i\leq n\}$ are prime. Then $\{[I_i]^\perp:1\leq i\leq n\}$ are asymptotically orthogonal to each other.
\end{prop}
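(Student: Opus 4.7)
The approach I would take exploits the fact that each quotient module $\mathcal{N}_i=[I_i]^\bot$ inherits a natural grading from the homogeneity of $I_i$, and that each homogeneous piece is one-dimensional when $Z(I_i)$ is a simple distinguished variety. Since each $I_i$ is a prime homogeneous ideal with $\dim_\mathbb{C} Z(I_i)=1$ and $Z(I_i)\cap\partial\mathbb{D}^d\subset\mathbb{T}^d$, its zero variety is a simple homogeneous distinguished variety $V_{\theta^{(i)}}$ as in (\ref{Def:V_theta}), and $I_i=J_{\theta^{(i)}}$. Because $I_i\neq I_j$, we have $V_{\theta^{(i)}}\neq V_{\theta^{(j)}}$; equivalently, $\theta^{(j)}$ is not a unimodular scalar multiple of $\theta^{(i)}$. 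Homogeneity yields a grading $\mathcal{N}_i=\bigoplus_{k\geq 0}\mathcal{N}_i^{(k)}$ with $\mathcal{N}_i^{(k)}=\mathcal{N}_i\cap H^{(k)}$, where $H^{(k)}$ denotes the degree-$k$ homogeneous polynomials in $H^2(\mathbb{D}^d)$. By Proposition \ref{prop:characterization} applied with $N=1$, $\mathcal{N}_{\theta^{(i)}}=(\ker r_{\theta^{(i)}})^\bot$, and since the restriction $r_{\theta^{(i)}}$ sends $H^{(k)}$ surjectively onto $\mathbb{C}\cdot z^k$, one obtains $\dim\mathcal{N}_i^{(k)}=1$ for every $k\geq 0$.

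Next, by extracting the degree-$k$ part of the reproducing kernel $K^\otimes_{\theta^{(i)}\lambda}\in\mathcal{N}_{\theta^{(i)}}$, one identifies an explicit generator of $\mathcal{N}_{\theta^{(i)}}^{(k)}$, namely
\[
p_{\theta,k}(z)=\sum_{|\alpha|=k}\bar{\theta}^{\alpha}z^{\alpha},\qquad \|p_{\theta,k}\|^2=\binom{k+d-1}{d-1}.
\]
Since both $P_{\mathcal{N}_i}$ and $P_{\mathcal{N}_j}$ respect the degree decomposition of $H^2(\mathbb{D}^d)$, $P_{\mathcal{N}_i}P_{\mathcal{N}_j}$ is block-diagonal with blocks $P_{\mathcal{N}_i^{(k)}}P_{\mathcal{N}_j^{(k)}}$ of rank at most one, whose operator norms are
\[
\|P_{\mathcal{N}_i^{(k)}}P_{\mathcal{N}_j^{(k)}}\|=|\langle\hat p_{\theta^{(i)},k},\hat p_{\theta^{(j)},k}\rangle|,\qquad \hat p:=p/\|p\|.
\]
Thus compactness of $P_{\mathcal{N}_i}P_{\mathcal{N}_j}$ reduces to showing that these normalized inner products tend to zero as $k\to\infty$.

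The heart of the proof is the estimate $\langle p_{\theta^{(i)},k},p_{\theta^{(j)},k}\rangle=\sum_{|\alpha|=k}\eta^\alpha=[x^k]\prod_{l=1}^d(1-\eta_l x)^{-1}$, where $\eta_l=\overline{\theta_l^{(i)}}\theta_l^{(j)}\in\mathbb{T}$. The assumption $V_{\theta^{(i)}}\neq V_{\theta^{(j)}}$ translates precisely to the statement that the coordinates of $\eta$ are not all equal, so if one groups repeated values, the largest multiplicity $n_{\max}$ satisfies $n_{\max}\leq d-1$. A partial-fractions expansion of the generating function, combined with $|\mu_t|=1$ at each pole $x=\mu_t^{-1}$, then yields
\[
\Big|[x^k]\prod_{l=1}^d\frac{1}{1-\eta_l x}\Big|=O(k^{n_{\max}-1})=O(k^{d-2}).
\]
Dividing by $\|p_{\theta^{(i)},k}\|\cdot\|p_{\theta^{(j)},k}\|\asymp k^{d-1}$ gives $|\langle\hat p_{\theta^{(i)},k},\hat p_{\theta^{(j)},k}\rangle|=O(k^{-1})$, so in fact $\sum_k\|P_{\mathcal{N}_i^{(k)}}P_{\mathcal{N}_j^{(k)}}\|^2<\infty$ and $P_{\mathcal{N}_i}P_{\mathcal{N}_j}\in\mathcal{L}^2$, hence is compact.

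The main obstacle is clarifying and exploiting the identification $\dim\mathcal{N}_i^{(k)}=1$ together with the explicit generator $p_{\theta,k}$; once that is in hand, the problem collapses to the elementary partial-fractions asymptotic above, and no further use of the Fredholm parametrix or Bergman-space decomposition from Section 2 is required.
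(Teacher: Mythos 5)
Your proposal is correct, and its skeleton matches the paper's: both arguments reduce asymptotic orthogonality to showing that the angle between the one-dimensional degree-$k$ components of $[I_i]^\perp$ and $[I_j]^\perp$ (each spanned by the degree-$k$ slice of the reproducing kernels along the lines $V_{\theta^{(i)}}$, $V_{\theta^{(j)}}$) tends to zero. Where you genuinely differ is in the key computation. The paper works with the powers $g_i^k=\langle z,\theta^{(i)}\rangle^k$, treats $\{g_i^k/\|g_i^k\|\}$ as an orthonormal basis of $[I_i]^\perp$, and obtains the geometric bound $\bigl(|\langle\theta^{(j)},\theta^{(i)}\rangle|/d\bigr)^k$; you instead use the true graded generators $p_{\theta,k}=\sum_{|\alpha|=k}\bar\theta^\alpha z^\alpha$, identify $\langle p_{\theta^{(i)},k},p_{\theta^{(j)},k}\rangle$ with the complete homogeneous symmetric function $h_k(\eta)$, $\eta_l=\overline{\theta^{(i)}_l}\theta^{(j)}_l$, and bound it by $O(k^{d-2})$ via partial fractions, using that $I_i\neq I_j$ forces the $\eta_l$ not to be all equal. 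Your route is in fact the more careful one: the powers $\langle z,\theta\rangle^k$ are not orthogonal to $[I_i]$ (already $\langle (z_1+z_2)^2,(z_1-z_2)z_1\rangle=-1\neq0$ for $\theta=(1,1)$), and the true normalized overlaps decay only like $1/k$ in general (e.g.\ $\theta^{(i)}=(1,1)$, $\theta^{(j)}=(1,-1)$ in even degrees), not geometrically; your $O(1/k)$ rate is the correct one and yields the stronger conclusion $P_{[I_i]^\perp}P_{[I_j]^\perp}\in\mathcal{L}^2$ rather than mere compactness. When writing it up, make explicit that $I_i=J_{\theta^{(i)}}$ because a prime ideal is radical, so $I_i=I(Z(I_i))$, and that an irreducible one-dimensional homogeneous variety meeting $\partial\mathbb{D}^d$ only in $\mathbb{T}^d$ is a line $V_\theta$ with $\theta\in\mathbb{T}^d$; also note the degree-zero blocks coincide (the constants), which is harmless since a single block does not affect compactness.
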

\begin{proof}
    Assume $V_i=V_{\theta_i}$ for $\theta_i\in\mathbb{T}^d$. Since $[I_i]^\perp$ is prime, we have
    $$[I_i]^\perp=\overline{\span}\{K_{\lambda\theta_i}\in H^2(\mathbb{D}^d):\lambda\in\mathbb{D}\}.$$
    Obviously
    $$K_{\lambda\theta_i}=\sum_{k=1}^\infty\bar{\lambda}^kg_i^k$$
    where $g_i(z)=\langle z,\theta_i\rangle,\forall z\in\mathbb{D}^d$.
    We can see that $\{\frac{g_i^k}{||g_i^k||}:k\in\mathbb{Z}_+\}$ forms an orthonormal basis for $[I_i]^\perp$.
    One can compute for $\lambda\in\mathbb{D}$ that
    \begin{eqnarray*}
        \lambda^k\binom{k+d-1}{d-1}\langle g_i^k,g_j^k\rangle&=&\langle g_i^k,K_{\lambda\theta_j}\rangle\\
        &=&g_i^k(\lambda\theta_j)\\
        &=&\langle\lambda\theta_j,\theta_i\rangle^k\\
        &=&\lambda^k\langle\theta_j,\theta_i\rangle^k,
    \end{eqnarray*}
    which induces $\langle g_i^k,g_j^k\rangle=\binom{k+d-1}{d-1}^{-1}\langle\theta_j,\theta_i\rangle^k$. Since $\langle\theta_i,\theta_i\rangle=d$ we obtain
    $$\lim_{k\to\infty}\frac{|\langle g_i^k,g_j^k\rangle|}{||g_i^k||\cdot||g_j^k||}=\lim_{k\to\infty}\frac{|\langle\theta_j,\theta_i\rangle|^k}{d^k}=0$$
    whenever $|\langle\theta_j,\theta_i\rangle|<d$. This shows that $[I_i]^\perp$ and $[I_j]^\perp$ are asymptotic orthogonal whenever $i\neq j$.
\end{proof}

\begin{rem}\label{rem:ortho}
In the proof of Proposition \ref{prop:semisimple}, it can be seen that if $V_{\theta_1}$ is orthogonal to $V_{\theta_2}$, then  $[J_{\theta_1}]^\perp\ominus \mathbb C$ is orthogonal to $[J_{\theta_2}]^\perp\ominus \mathbb C$, where $\mathbb C$ denotes the 1-dimensional subspace of constant functions.
\end{rem}

\section{Boundary representation of quotient modules}

In this section, we consider the boundary representation of quotient modules. Let us begin by recording Arveson's boundary representation theorem given in \cite{Arv1}.
\begin{thm}\label{thm:ArvBound} Let $\mathcal{A}$ be an irreducible set of operators on a Hilbert space $H$, such that $\mathcal{A}$ contains the identity, and $C^*(\mathcal{A})$ the $C^*$-algebra generated by $\mathcal{A}$ contains the algebra $K(H)$ of compact operators on $H$. Then the identity representation of $C^*(\mathcal{A})$ is a boundary representation for $\mathcal{A}$ if and only if the quotient map $Q: B(H)\to B(H)/K(H)$ is not completely isometric on the linear span of $\mathcal{A}\cup\mathcal{A}^*$.
\end{thm}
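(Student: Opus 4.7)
My plan is to translate the boundary-representation property into a unique-extension property for unital completely positive (UCP) maps, and then to exploit the essentiality of the ideal $K(H) \subset C^*(\cA)$ via Arveson's extension theorem and Stinespring's dilation.

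First, I would unpack the definition: the identity representation is a boundary representation for $\cA$ precisely when every UCP map $\phi: C^*(\cA) \to B(H)$ restricting to the inclusion on the operator system $S := \mathrm{span}(\cA \cup \cA^* \cup \{\Id\})$ must coincide with the inclusion on all of $C^*(\cA)$. The theorem then asserts that this unique-extension property is equivalent to $Q|_S$ failing to be completely isometric.

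For the easy direction, assume $Q|_S$ is a complete isometry. Then its inverse on $Q(S)$ is a unital complete contraction into $B(H)$, so Arveson's extension theorem lifts it to a UCP map $\rho: B(H)/K(H) \to B(H)$. The composition $\phi := \rho \circ Q \colon C^*(\cA) \to B(H)$ is UCP with $\phi|_S = \Id|_S$ but annihilates the nonzero ideal $K(H)$, hence $\phi \neq \Id$. This exhibits a second UCP extension, so the identity cannot be a boundary representation.

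For the converse, suppose $\phi: C^*(\cA) \to B(H)$ is UCP with $\phi|_S = \Id|_S$; the aim is to show either $\phi = \Id$ or $Q|_S$ is completely isometric. Apply Stinespring: $\phi(T) = V^*\pi(T)V$ for some $*$-representation $\pi: C^*(\cA) \to B(\cK)$ and isometry $V: H \hookrightarrow \cK$. Split cases on $\pi|_{K(H)}$. If $\pi|_{K(H)} = 0$, then $\pi$ factors through the Calkin algebra as $\pi = \tilde{\pi} \circ Q$, so $\phi = V^*\tilde{\pi}(Q(\cdot))V$; the identity $\phi|_S = \Id|_S$ then expresses the inclusion $S \hookrightarrow B(H)$ as a complete contraction applied to $Q|_S$, and a Schwarz-type argument forces $Q|_S$ to be a complete isometry. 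If instead $\pi|_{K(H)} \neq 0$, then since all non-degenerate representations of $K(H)$ are multiples of the identity, $\pi$ contains the identity as a direct summand; using that $K(H)$ is essential in $C^*(\cA)$, this summand extends to the identity on all of $C^*(\cA)$. Irreducibility of $\cA$ together with $\phi|_S = \Id|_S$ then forces $V$ to land in the identity summand, whence $\phi = \Id$.

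The main obstacle is this last sub-step of the converse: ruling out a nontrivial ``extra'' summand by arguing that $V$ is actually the embedding of $H$ into the identity summand of $\pi$. This is precisely where the irreducibility of $\cA$ enters decisively, since the orthogonal complement of the identity summand, pulled back through $V$, would otherwise yield a nonzero proper $\cA$-invariant subspace of $H$.
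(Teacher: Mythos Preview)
The paper does not give a proof of this theorem; it is merely quoted from Arveson \cite{Arv1} as a tool to be applied later in Section~4, so there is no proof in the paper to compare your proposal against.

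On the merits of your sketch: the easy direction is correct as written. For the converse, however, the case split on whether $\pi|_{K(H)}$ vanishes is too coarse, since in general $\pi$ decomposes as a multiple of the identity representation \emph{plus} a singular part annihilating $K(H)$, and your argument must handle the mixed case directly. You acknowledge this, but your proposed resolution---that the pullback through $V$ of the singular summand yields an $\cA$-invariant subspace of $H$---does not follow from $V^*\pi(a)V=a$ alone: that identity does not give $Va=\pi(a)V$ (which would require each $a\in\cA$ to lie in the multiplicative domain of $\phi$), so there is no evident reason why $\ker(P_0V)$ or $\overline{V^*\cK_0}$ should be invariant under $\cA$. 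In Arveson's actual argument the hypothesis that $Q|_S$ fails to be completely isometric is used at precisely this point (not merely to exclude the pure case $\pi|_{K(H)}=0$), and irreducibility is invoked only after one has already reduced $\pi$ to a multiple of the identity representation. Your instinct about where the difficulty sits is correct, but the mechanism you propose for resolving it needs to be replaced.
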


To continue, we need to study the essential joint spectrum of $(S_1,\ldots, S_d)$.
\begin{lem}\label{lem:ess}
Let $V$ be a distinguished homogenous variety and $I$ is an ideal such that $Z(I)=V$, then for the quotient module $[I]^\perp$, $V\cap\mathbb T^d\subseteq\sigma_e(S_1,\ldots, S_d)$ and $\sigma_e(S_1,\ldots, S_d)\subseteq V\cap \overline{\mathbb D}^d$.
\end{lem}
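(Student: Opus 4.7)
The plan is to prove the two inclusions separately, with the second being essentially a spectral-mapping argument and the first coming from reproducing-kernel approximate eigenvectors.

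For the inclusion $\sigma_e(S_1,\ldots,S_d)\subseteq V\cap\overline{\mathbb{D}}^d$, I first note that each $S_i$ is a contraction, being a compression of $M_{z_i}$, so $\sigma_e(S_i)\subseteq\overline{\mathbb{D}}$ and therefore $\sigma_e(S_1,\ldots,S_d)\subseteq\overline{\mathbb{D}}^d$. To land inside $V$, I use that $[I]$ is a submodule: this makes each $M_{z_i}$ a lower-triangular operator with respect to $\mathcal{N}\oplus[I]$, and a direct induction yields $p(S_1,\ldots,S_d)=P_\mathcal{N}M_p|_\mathcal{N}$ for every $p\in\poly$. If $p\in I$, then $M_pH^2(\mathbb{D}^d)\subseteq[I]=\mathcal{N}^\bot$, so $p(S_1,\ldots,S_d)=0$. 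The joint spectral mapping theorem then gives $p(\sigma_e(S_1,\ldots,S_d))\subseteq\sigma_e(p(S_1,\ldots,S_d))=\{0\}$, hence $\sigma_e\subseteq Z(p)$, and intersecting over $p\in I$ yields $\sigma_e(S_1,\ldots,S_d)\subseteq V$.

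For the inclusion $V\cap\mathbb{T}^d\subseteq\sigma_e(S_1,\ldots,S_d)$, I exhibit, for each boundary point, an approximate joint eigenvector for the adjoint tuple that tends weakly to zero. By the decomposition of a distinguished homogeneous variety already used in the proof of Theorem \ref{thm:main}, $V\cap\mathbb{D}^d=\bigcup_{k=1}^n V_{\theta^{(k)}}$ with each $V_{\theta^{(k)}}=\{\lambda\theta^{(k)}:\lambda\in\mathbb{D}\}$, so every $\theta'\in V\cap\mathbb{T}^d$ has the form $\theta'=z_0\theta^{(k)}$ with $z_0\in\mathbb{T}$ and some $k$. Pick $\lambda_n\in\mathbb{D}$ with $\lambda_n\to z_0$ and set $\mu_n=\lambda_n\theta^{(k)}\in V\cap\mathbb{D}^d$. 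For $f\in[I]$, continuity of point evaluation at $\mu_n\in\mathbb{D}^d$ and the fact that $f$ is an $H^2$-limit of polynomials in $I$ give $\langle f,K_{\mu_n}^\otimes\rangle=f(\mu_n)=0$, so $K_{\mu_n}^\otimes\in\mathcal{N}$. Moreover $S_i^*K_{\mu_n}^\otimes=P_\mathcal{N} M_{z_i}^*K_{\mu_n}^\otimes=\bar{\mu}_{n,i}K_{\mu_n}^\otimes$. Normalizing to $k_{\mu_n}^\otimes=K_{\mu_n}^\otimes/\|K_{\mu_n}^\otimes\|$, we have $\|K_{\mu_n}^\otimes\|^2=(1-|\lambda_n|^2)^{-d}\to\infty$, so $k_{\mu_n}^\otimes\to 0$ weakly in $H^2(\mathbb{D}^d)$ and hence in $\mathcal{N}$, while
\[
\|(S_i^*-\bar{\theta'_i})k_{\mu_n}^\otimes\|=|\bar{\mu}_{n,i}-\bar{\theta'_i}|=|\lambda_n-z_0|\to 0
\]
for every $i$. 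This places $\bar{\theta'}$ in the joint essential approximate point spectrum of $(S_1^*,\ldots,S_d^*)$, and therefore $\theta'\in\sigma_e(S_1,\ldots,S_d)$.

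The only subtlety is keeping track of which notion of joint essential spectrum is in force (Taylor, Harte, or the naive Cartesian one): the spectral mapping step for the easy inclusion and the approximate-eigenvector step for the hard inclusion are valid for any of the standard choices, so no genuine obstacle appears beyond that bookkeeping. The real content is the observation that the tensor-product kernels $K_{\mu_n}^\otimes$ at points $\mu_n\in V\cap\mathbb{D}^d$ automatically live in $\mathcal{N}$ and are joint eigenvectors of $(S_1^*,\ldots,S_d^*)$, after which the whole lemma becomes routine.
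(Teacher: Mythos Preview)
Your argument is correct and, for the inclusion $\sigma_e(S_1,\ldots,S_d)\subseteq V\cap\overline{\mathbb D}^d$, coincides with the paper's proof almost verbatim: contractivity gives $\overline{\mathbb D}^d$, and $p(S_1,\ldots,S_d)=0$ for $p\in I$ plus spectral mapping gives $V$.

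For the inclusion $V\cap\mathbb T^d\subseteq\sigma_e(S_1,\ldots,S_d)$, both you and the paper use the same underlying object---the normalized kernels $k_{\mu_n}^\otimes$ at points $\mu_n\in V\cap\mathbb D^d$ approaching the boundary point---and the same eigenvector identity $S_{z_i}^*k_{\mu_n}^\otimes=\bar\mu_{n,i}k_{\mu_n}^\otimes$. The difference is only in packaging. The paper first invokes Theorem~\ref{thm:main} to get essential normality, then applies Curto's criterion to reduce Taylor--Fredholmness of $(\lambda_i-S_{z_i})$ to ordinary Fredholmness of $T=\sum_i(\lambda_i-S_{z_i})(\lambda_i-S_{z_i})^*$, and finally shows $\langle Tk_{\mu_n}^\otimes,k_{\mu_n}^\otimes\rangle\to 0$ to contradict Fredholmness. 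You instead observe directly that a weakly null sequence of joint unit eigenvectors for $(S_i^*)$ forces the first Koszul map to fail left-Fredholmness, so $\theta'\in\sigma_e$ without any appeal to essential normality or to \cite{Cur}. Your route is therefore slightly more elementary and self-contained; the paper's route has the advantage of making the link to the already-proved essential normality explicit, but at the cost of heavier prerequisites.
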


\begin{proof} By Theorem \ref{thm:main}, $\lambda_i-S_{z_i}(i=1,\ldots,d)$ are essentially normal. By \cite[Corollary 3.9]{Cur}, the tuple $(\lambda_1-S_{z_1},\ldots,\lambda_d-S_{z_d})$ is Fredholm if and only if $\sum\limits_{i=1}^{d}(\lambda_i-S_{z_i})(\lambda_i-S_{z_i})^*$ is Fredholm. We claim that $\partial V\subseteq \sigma_e(S_1,\ldots, S_d)$. Otherwise, there exists $\underline{\lambda}=(\lambda_1,\ldots,\lambda_d)\in\partial V$ such that $T=\sum\limits_{i=1}^d (\lambda_i-S_{z_i})(\lambda_i-S_{z_i})^*$ is Fredholm. Since $T$ is positive, there is an invertible positive operator $B$ and a compact operator $K$ such that $T=B+K$. Now, take a sequence $\{\underline{\mu}_n\}\subset V\cap \mathbb D^d$ such that $\underline{\mu}_n\to \underline{\lambda}$ as $n\to\infty$. Notice that $\{k_{\underline{\mu}_n}\}$ converges to $0$ weakly, there is a positive number $c$ such that
\begin{eqnarray*}
\lim\limits_{n\to\infty}\langle T k_{\underline{\mu}_n}, k_{\underline{\mu}_n}\rangle=\lim\limits_{n\to\infty}\langle(B+K)k_{\underline{\mu}_n}, k_{\underline{\mu}_n}\rangle=\lim\limits_{n\to\infty}\langle Bk_{\underline{\mu}_n}, k_{\underline{\mu}_n}\rangle\geq c.
\end{eqnarray*}
However, since $\underline{\mu}_n\in V$, $k_{\underline {\mu}_n}\in [I]^\perp$ it holds that
\begin{eqnarray}
\lim\limits_{n\to\infty}\langle T k_{\underline {\mu}_n},k_{\underline {\mu}_n}\rangle=\lim\limits_{n\to\infty} |\underline{\lambda}-\underline{\mu}_n|^2=0,
\end{eqnarray}
contradicting to the former inequality. Hence the claim is proved.

Moreover, for $f\in I$, if $f(S_1,\ldots, S_d)=0$ then the Spectral Mapping Theorem ensures that $\sigma_e(S_1,\ldots, S_d)\subseteq Z(f)$. Hence $\sigma_e(S_1,\ldots, S_d)\subseteq V$. On the other hand, since $\|S_i\|\leq 1$ for each $i$, we have $\sigma_e(S_1,\ldots, S_d)\subseteq V\cap \overline{\mathbb D}^d$.
\end{proof}
The following lemma is easy to verify, and we omit its proof.
\begin{lem}\label{lem:essspec}
Let $X$ be a Banach space, and $T$ is a bounded operator on $X$ with two invariant subspaces $M_1$ and $M_2$ satisfying $X=M_1\dot{+}M_2$, where $\dot{+}$ denotes topological direct sum. Then $\sigma_e(T)=\sigma_e(T_1)\cup \sigma_e(T_2)$.
\end{lem}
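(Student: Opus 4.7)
The plan is to exploit the topological direct sum decomposition to express $T$ as a block diagonal operator and reduce its Fredholm theory to that of $T_1$ and $T_2$ separately. Since $X=M_1\dot{+}M_2$ with both summands closed, the closed graph theorem gives bounded projections $P_i:X\to M_i$ with $P_1+P_2=\Id_X$. The invariance of $M_i$ under $T$ then yields $TP_i=P_iTP_i$, so $T$ admits the block diagonal representation $T=T_1P_1+T_2P_2$ with respect to this decomposition, where $T_i=T|_{M_i}$.

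The core reduction to establish is that $\lambda-T$ is Fredholm on $X$ if and only if both $\lambda-T_1$ on $M_1$ and $\lambda-T_2$ on $M_2$ are Fredholm. For the backward direction, if each $\lambda-T_i$ has a Fredholm parametrix $R_i:M_i\to M_i$ (so that $R_i(\lambda-T_i)-\Id_{M_i}$ and $(\lambda-T_i)R_i-\Id_{M_i}$ are finite rank), then $R:=R_1P_1+R_2P_2$ is a Fredholm parametrix for $\lambda-T$ on $X$. For the forward direction, I would observe from the block structure that $\ker(\lambda-T)=\ker(\lambda-T_1)\oplus\ker(\lambda-T_2)$ and $\operatorname{ran}(\lambda-T)=\operatorname{ran}(\lambda-T_1)\oplus\operatorname{ran}(\lambda-T_2)$, and hence finite-dimensionality of the kernel/cokernel and closedness of the range transfer between $T$ and the summands $T_i$ via the bounded projections $P_i$.

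From this equivalence the equality of essential spectra is immediate: $\lambda\notin\sigma_e(T)$ iff $\lambda-T$ is Fredholm iff each $\lambda-T_i$ is Fredholm iff $\lambda\notin\sigma_e(T_1)\cup\sigma_e(T_2)$. The only subtlety is the forward direction of the Fredholm equivalence, where one must argue that the range of $\lambda-T_i$ is closed in $M_i$ whenever the range of $\lambda-T$ is closed in $X$; this follows because $\operatorname{ran}(\lambda-T_i)=P_i\operatorname{ran}(\lambda-T)$ and $P_i$ is bounded with closed range $M_i$. No deep ingredient is required beyond standard Fredholm theory and the boundedness of the projections coming from the topological direct sum hypothesis.
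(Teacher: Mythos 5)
The paper itself offers no argument here (it declares the lemma ``easy to verify'' and omits the proof), so there is nothing to compare against; your block-diagonal reduction is the natural and standard route, and in outline it is correct: the bounded projections $P_i$ coming from the topological direct sum commute with $T$, the parametrix $R_1P_1+R_2P_2$ handles one implication, and the decompositions $\ker(\lambda-T)=\ker(\lambda-T_1)\dot{+}\ker(\lambda-T_2)$, $\mathrm{ran}(\lambda-T)=\mathrm{ran}(\lambda-T_1)\dot{+}\mathrm{ran}(\lambda-T_2)$ handle the other, giving $\sigma_e(T)=\sigma_e(T_1)\cup\sigma_e(T_2)$.

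The one step whose justification does not stand as written is the closedness of $\mathrm{ran}(\lambda-T_i)$. You argue that $\mathrm{ran}(\lambda-T_i)=P_i\,\mathrm{ran}(\lambda-T)$ is closed ``because $P_i$ is bounded with closed range $M_i$,'' but a bounded projection with closed range does not in general map closed subspaces to closed subspaces: in $X=H\oplus H$ with $P_1$ the first-coordinate projection, the graph $Z=\{(Ax,x):x\in H\}$ of a bounded operator $A$ with non-closed range is closed, while $P_1Z=\mathrm{ran}\,A$ is not. The conclusion is nevertheless true in your situation, and the repair is one line: since $P_i$ commutes with $\lambda-T$, the subspace $\mathrm{ran}(\lambda-T)$ is invariant under $P_i$, so $P_i\,\mathrm{ran}(\lambda-T)=\mathrm{ran}(\lambda-T)\cap M_i$, an intersection of closed subspaces. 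Alternatively, first note that $M_i/\mathrm{ran}(\lambda-T_i)$ embeds in $X/\mathrm{ran}(\lambda-T)$ (using $M_i\cap\mathrm{ran}(\lambda-T)=\mathrm{ran}(\lambda-T_i)$, which follows from uniqueness of the decomposition), hence is finite dimensional, and then invoke the standard fact that a finite-codimensional range of a bounded operator is automatically closed. With either patch the forward implication, and hence the lemma, is complete.
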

Let $V_{\theta_i}(i=1,2)$ be the variety defined by (\ref{Def:V_theta}), and  $I_i(i=1,2)$ be a homogenous ideal with varitey $V_{\theta_i}$. Consider the quotient module $\cal N=[I_1 I_2]^\perp$. Notice that $[I_1]^\perp\cap [I_2]^\perp$ is of finite dimension, and $[I_1]^\perp+[I_2]^\perp$ is closed.  Now, let $\cal N_i=[I_i]^\perp\ominus ([I_1]^\perp\cap [I_2]^\perp)$. Then $$\cal N=(\cal N_1\dot{+} \cal N_2)\oplus ([I_1]^\perp\cap [I_2]^\perp)$$
By Lemma \ref{lem:essspec} we have $\sigma_e(M_{z_i}^*|_N)=\sigma_e(M_{z_i}^*|_{[I_1]^\perp})\cup \sigma_e(M_{z_i}|_{[I_2]^\perp})$. Therefore, to study $\sigma_e(M_{z_i}^*|_N)$ it suffices to study $\sigma_e(M_{z_i}^*|_{[I_i]^\perp})$.
\begin{lem}\label{lem:ess1}
Let $V_\theta$ be defined by (\ref{Def:V_theta}), and $I$ is a homogenous ideal such that $Z(I)=V_\theta$. Then $\sigma_{e}(M_{z_i}|_{[I]^\perp}^*)\subset \mathbb T$.
\end{lem}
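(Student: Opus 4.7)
The plan is to show that the quotient-module operator $S_{z_i}:=P_\mathcal{M} M_{z_i}|_\mathcal{M}$ on $\mathcal{M}:=[I]^\perp$ is \emph{essentially unitary}, meaning that both $I_\mathcal{M}-S_{z_i}^*S_{z_i}$ and $I_\mathcal{M}-S_{z_i}S_{z_i}^*$ lie in the compact ideal. The image of such an operator in the Calkin algebra is a unitary, whose spectrum necessarily lies in $\mathbb{T}$. Since $[I]^\perp$ is $M_{z_i}^*$-invariant, one has $M_{z_i}|_{[I]^\perp}^*=S_{z_i}^*$, and so $\sigma_e(M_{z_i}|_{[I]^\perp}^*)=\overline{\sigma_e(S_{z_i})}\subset\mathbb{T}$ will follow immediately.

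Two ingredients are needed. First, Theorem \ref{thm:primary} (applied with $V=V_\theta$) gives that $\mathcal{M}$ is $1$-essentially normal; in particular $[S_{z_i}^*,S_{z_i}]\in\mathcal{L}^1$ is compact. Second, I claim $I_\mathcal{M}-S_{z_i}^*S_{z_i}$ is compact. The key computation uses that $M_{z_i}$ is an isometry on $H^2(\mathbb{D}^d)$ (since $|z_i|=1$ on $\mathbb{T}^d$) and that $\mathcal{M}$ is coinvariant, so $S_{z_i}^*=M_{z_i}^*|_\mathcal{M}$. Writing $A_i:=P_\mathcal{M}^\perp M_{z_i} P_\mathcal{M}$, for $f\in\mathcal{M}$ one has
$$S_{z_i}^*S_{z_i}f=M_{z_i}^*P_\mathcal{M} M_{z_i}f=M_{z_i}^*M_{z_i}f-M_{z_i}^*P_\mathcal{M}^\perp M_{z_i}f=f-A_i^*A_if,$$
i.e.\ $I_\mathcal{M}-S_{z_i}^*S_{z_i}=A_i^*A_i|_\mathcal{M}$. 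The corollary to Theorem \ref{thm:primary} supplies $A_i\in\mathcal{L}^{(2,\infty)}$, hence $A_i^*A_i\in\mathcal{L}^{(1,\infty)}$, which is a compact ideal.

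Combining the two ingredients,
$$I_\mathcal{M}-S_{z_i}S_{z_i}^*=(I_\mathcal{M}-S_{z_i}^*S_{z_i})+[S_{z_i}^*,S_{z_i}]$$
is also compact, so $S_{z_i}$ is essentially unitary and the lemma follows. I do not anticipate any substantive obstacle; the argument is a short two-step assembly of results already established in Section 2, with the isometric property $M_{z_i}^*M_{z_i}=I$ on $H^2(\mathbb{D}^d)$ doing the bridging work between essential normality and essential isometry.
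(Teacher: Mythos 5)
Your proposal is correct, and it takes a genuinely different route from the paper's. The paper passes to the larger quotient module $[J_\theta^N]^\perp$, where $J_\theta=\sqrt I$ and $J_\theta^N\subset I$: by Remark \ref{rem:shif} (i.e.\ Corollary \ref{cor:isometry} transported by the rotation unitary) the compression of $M_{z_i}$ there is a finite-multiplicity shift plus a compact operator, so its adjoint has essential spectrum $\mathbb T$, and the lemma is then deduced by restricting to $[I]^\perp$, viewed as an invariant subspace of $M_{z_i}^*|_{[J_\theta^N]^\perp}$. You instead work directly on $\mathcal M=[I]^\perp$ and prove that $S_{z_i}$ is essentially unitary: $I_{\mathcal M}-S_{z_i}^*S_{z_i}=A_i^*A_i$ with $A_i=P_{\mathcal M}^\perp M_{z_i}P_{\mathcal M}\in\mathcal L^{(2,\infty)}$ (the corollary following Theorem \ref{thm:primary}), while $[S_{z_i}^*,S_{z_i}]$ is trace class by $1$-essential normality, so the Calkin image of $S_{z_i}$ is a unitary and $\sigma_e(S_{z_i}^*)\subset\mathbb T$. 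Comparing the two: the paper's argument is shorter once Remark \ref{rem:shif} is in hand, but its final step is delicate --- the restriction of an essentially unitary contraction to an invariant subspace is, a priori, only essentially isometric, and an essential isometry can have essential spectrum all of $\overline{\mathbb D}$, so that step tacitly needs an extra index/compactness consideration; your essential-unitarity argument sidesteps this entirely by establishing compactness of $I_{\mathcal M}-S_{z_i}S_{z_i}^*$ on $[I]^\perp$ itself, at the price of invoking the heavier Theorem \ref{thm:primary} and its corollary. One small citation repair: Section 2 normalizes $\theta=(1,\ldots,1)$, so for general $\theta\in\mathbb T^d$ you should quote Theorem \ref{thm:singlevariety} (or conjugate by the rotation unitary $U_\theta$ of Remark \ref{rem:shif}) rather than Theorem \ref{thm:primary} verbatim; this is cosmetic and does not affect the substance of your argument.
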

\begin{proof}
Let $J_\theta=\sqrt{I}$ and $N$ is a positive integer such that $J_\theta^{N}\subset I\subset J_\theta$. By remark \ref{rem:shif}, there is  an isometry $S$ and a compact operator $K$ such that $M_{z_i}^*|_{[J_\theta^{N}]^\perp}=S^*+K$, and hence $$\sigma_e(M_{z_i}^*|_{[J_\theta^{N}]^\perp})=\sigma_e(S^*)=\mathbb T.$$ Because $[I]^\perp$ is an invariant subspace of $M_{z_i}^*|_{[J_\theta^{N}]^\perp}$, we have $\sigma_{e}(M_{z_i}|_{[I]^\perp}^*)\subset \mathbb T$.
\end{proof}
By Lemma \ref{lem:essspec}, for distinguished variety $V$ and homogenous ideal $I$ with $Z(I)=V$, we have \begin{eqnarray}\label{for:ess}\sigma_e(M_{z_i}^*|_{[I]^\perp})\subset \mathbb T.\end{eqnarray}
\begin{prop}
Let $I$ be a homogenous ideal with distinguished variety, then $$\sigma_e(S_{z_1},\ldots,S_{z_d})=V\cap \mathbb T^d.$$
\end{prop}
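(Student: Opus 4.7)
The plan is to combine three ingredients already established: Lemma \ref{lem:ess} for the two rough inclusions, the essential normality of each $S_{z_i}$ from the main theorem (Theorem \ref{thm:main}), and formula (\ref{for:ess}) for the one-variable essential spectra.

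First, from Lemma \ref{lem:ess} I get for free that
$$V\cap\mathbb{T}^d \;\subseteq\; \sigma_e(S_{z_1},\ldots,S_{z_d}) \;\subseteq\; V\cap\overline{\mathbb{D}}^d,$$
so the only thing that remains is to push the outer inclusion from $V\cap\overline{\mathbb{D}}^d$ down to $V\cap\mathbb{T}^d$, i.e.\ to show that every point of the joint essential spectrum has all coordinates of modulus one.

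Next I would invoke the standard coordinate-projection property of the essential Taylor spectrum: for a commuting tuple $(T_1,\ldots,T_d)$ of essentially normal operators on a Hilbert space $H$, if $(\lambda_1,\ldots,\lambda_d)\in\sigma_e(T_1,\ldots,T_d)$ then $\lambda_i\in\sigma_e(T_i)$ for each $i$. The justification is that, since each $S_{z_i}$ is essentially normal by Theorem \ref{thm:main}, the images of the $S_{z_i}$'s in the Calkin algebra form a commuting tuple of normal elements, whose joint spectrum (in the unital $C^*$-subalgebra they generate) projects onto the individual spectra under the $i$-th coordinate map; any character witnessing a joint spectral value therefore witnesses the $i$-th spectral value for $\pi(S_{z_i})$.

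Now I apply formula (\ref{for:ess}): for our homogeneous ideal $I$ with distinguished variety, $\sigma_e(M_{z_i}^{*}|_{[I]^{\perp}})\subset\mathbb{T}$, and since $\sigma_e(S_{z_i})=\overline{\sigma_e(S_{z_i}^{*})}$, this forces $\sigma_e(S_{z_i})\subset\mathbb{T}$. Combining with the coordinate-projection property, any $(\lambda_1,\ldots,\lambda_d)\in\sigma_e(S_{z_1},\ldots,S_{z_d})$ has $|\lambda_i|=1$ for every $i$; together with the containment in $V$ already given by Lemma \ref{lem:ess}, this yields $\sigma_e(S_{z_1},\ldots,S_{z_d})\subseteq V\cap\mathbb{T}^d$, and the reverse inclusion from Lemma \ref{lem:ess} closes the argument.

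The only real obstacle is the coordinate-projection step, and it is mild: essential normality of the $S_{z_i}$'s (Theorem \ref{thm:main}) reduces the joint essential spectrum to the joint spectrum of a commuting tuple of normal operators in the Calkin algebra, where the projection property is classical (via Gelfand theory on the generated commutative $C^*$-algebra). Everything else is bookkeeping.
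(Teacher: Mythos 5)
Your argument is correct and follows essentially the same route as the paper: both rely on Lemma \ref{lem:ess} for the two containments and on formula (\ref{for:ess}) to force each coordinate of a joint essential spectrum point onto $\mathbb{T}$. The only cosmetic difference is that you justify the coordinate-projection step by Gelfand theory for commuting normal elements of the Calkin algebra, whereas the paper obtains the same reduction from Curto's Fredholm criterion for essentially normal tuples (\cite[Corollary 3.9]{Cur}), already used in Lemma \ref{lem:ess}; there is no gap.
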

\begin{proof}
By Lemma \ref{lem:ess}, it suffices to show $\sigma_e(S_{z_1},\ldots,S_{z_d})\subset\partial \mathbb D^d$. By (\ref{for:ess}), $\lambda-S_{z_i}$ is Fredholm if $|\lambda|<1$. It follows that for $\underline{\lambda}=(\lambda_1,\ldots,\lambda_d)\in\mathbb D^d$, $\lambda_i-S_{z_i}$ are Fredholm. Therefore $\sigma_e(S_{z_1},\ldots,S_{z_d})\subset \mathbb T^d$ by \cite{Cur}, which completes the proof.
\end{proof}

Suppose that $V$ is a distinguished homogenous variety, with the decomposition $V=V_1\cup\ldots\cup V_n$, such that each $V_i$ is irreducible with $\dim V_i=1$. Let $I$ be an ideal such that $Z(I)=V$, and we have the associated primary decomposition $I=I_0\cap I_1\cap\ldots\cap I_n$, such that $Z(I_0)=\{0\}$ and $Z(I_i)=V_i$ for $i=1,\ldots,n$. Since $\sqrt{I_0\cap I_1}=\sqrt{I_0}\cap\sqrt{I_1}=\sqrt{I_1}$, we can assume $I=I_1\cap\ldots\cap I_n$ without loss of generality.

\begin{thm}\label{thm:boundary}
With the above notations, if some $I_i$ is not prime, then the identity representation of  $C^*([I]^\perp)$ is  a boundary representation for $\cal B(S_1,\ldots, S_d)$.
\end{thm}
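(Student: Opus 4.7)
The plan is to invoke Arveson's boundary representation theorem (Theorem \ref{thm:ArvBound}) with $\mathcal{A}=\mathcal{B}(S_1,\ldots,S_d)$; three hypotheses must be checked, namely (i) $\mathcal{A}$ is irreducible on $\mathcal{M}=[I]^\perp$; (ii) $C^*(\mathcal{A})\supset K(\mathcal{M})$; and (iii) the Calkin quotient map $Q$ is not completely isometric on $\mathrm{span}(\mathcal{A}\cup\mathcal{A}^*)$. Conditions (i) and (ii) should follow from standard arguments: a projection commuting with every $S_i$ and $S_i^*$ would give a nontrivial orthogonal submodule decomposition of $\mathcal{M}$, which is incompatible with $\mathcal{M}$ being generated by the projections of reproducing kernels $K_\lambda^\otimes$ under the module action. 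For (ii), Theorem \ref{thm:main} gives $[S_i^*,S_j]\in K(\mathcal{M})$, and since the constant $1\in\mathcal{M}$ is an eigenvector of $[S_{z_i}^*,S_{z_i}]$ with positive eigenvalue (because $I$ is homogeneous and proper), $C^*(\mathcal{A})$ contains a nonzero compact operator; combined with irreducibility, the minimal-ideal argument gives $K(\mathcal{M})\subset C^*(\mathcal{A})$.

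The crux is (iii), and this is where the hypothesis that some $I_i$ is not prime is essential. When $I_i\subsetneq\sqrt{I_i}=J_{V_i}$, the strict containment $[J_{V_i}]^\perp\subsetneq[I_i]^\perp$, combined with the layer decomposition of Proposition \ref{prop:deg}, produces essential multiplicity $\geq 2$ at points of $V_i\cap\mathbb{T}^d$, reflected quantitatively in the trace formula giving $\mathrm{tr}\,[S_{z_i}^*,S_{z_i}]=\binom{d+N-2}{d-1}\geq 2$ for $N\geq 2$. I would exploit this higher multiplicity by exhibiting an explicit matrix $[T_{ab}]$ of elements in $\mathrm{span}(\mathcal{A}\cup\mathcal{A}^*)$ whose operator norm strictly exceeds its essential norm. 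A natural candidate is a $2\times 2$ block matrix built from $S_{z_i}$ and a cutoff polynomial analogous to the $\varphi_k$ used in Theorem \ref{thm:main} to isolate the non-prime primary component; the off-diagonal compacts $A_j^{(f,g)}$ of Proposition \ref{prop:shift}(b), together with equation (\ref{eq:shift}) and the isometry-plus-compact decomposition of Corollary \ref{cor:isometry}, provide the tools for computing both norms explicitly.

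The hardest part will be identifying the correct witnessing matrix and verifying the strict norm inequality. The structural ingredients from Section 2 make all relevant operators tractable, but the gap between operator norm and essential norm must be forced by the non-prime primary component rather than cancel across layers; a Schur complement analysis, or a direct singular-value comparison on test vectors drawn from the deeper layers $H_n$ with $n\geq 1$, should deliver the required gap, and this is the step where the full weight of the non-primeness hypothesis must be brought to bear.
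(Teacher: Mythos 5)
Your overall skeleton (verify irreducibility, compacts in $C^*(\mathcal{A})$, then disprove complete isometry of the Calkin map and invoke Theorem \ref{thm:ArvBound}) matches the paper, and steps (i)--(ii) are handled essentially as the paper does (it simply asserts them). But the crux, step (iii), is left as a plan rather than a proof, and the plan as stated would not go through. You never produce the required element with a norm gap: you say the "hardest part" is "identifying the correct witnessing matrix and verifying the strict norm inequality," which is precisely the content of the theorem. Moreover, the heuristic you lean on is off target. The trace formula $\tr[S_{z_i}^*,S_{z_i}]=\binom{d+N-2}{d-1}$ is proved only for $\mathcal{N}=[J^N]^\perp$, not for $[I]^\perp$ with general primary components, and "essential multiplicity $\geq 2$" does not by itself force the quotient map to fail to be completely isometric; the relevant dichotomy in this paper is primeness of the primary components (compare the following Proposition, where all $I_i$ prime yields \emph{no} boundary representation even though multicyclicity can be large), so an argument keyed to multiplicity cannot be the right mechanism.

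The paper's actual witness is much more direct and uses non-primeness exactly once: if $I_1$ is not prime (i.e.\ $I_1\subsetneq\sqrt{I_1}$), choose $g\in\sqrt{I_1}\setminus I_1$ and, as in the proof of Theorem \ref{thm:main}, a polynomial $\varphi\in I_2\cap\cdots\cap I_n$ not vanishing identically on $V_1$; set $f=\varphi g$. Then $f$ vanishes on $V=Z(I)$ but $f\notin I$, so $S_f=f(S_1,\ldots,S_d)\neq0$ lies in the algebra generated by $S_1,\ldots,S_d$. Since $\sigma_e(S_1,\ldots,S_d)=V\cap\mathbb{T}^d$ (Lemma \ref{lem:ess} and the subsequent proposition) and the module is essentially normal (Theorem \ref{thm:main}), the spectral mapping theorem gives $\sigma_e(S_f)=f(V\cap\mathbb{T}^d)=\{0\}$, hence $S_f$ is a nonzero compact operator and $\|S_f\|>\|S_f\|_e=0$. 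This single element already shows $Q$ is not (completely) isometric, and Arveson's theorem concludes. If you want to salvage your approach, you would have to replace the multiplicity/trace discussion with the construction of such a nonzero compact element of the algebra; without it, the proof is incomplete.
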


\begin{proof}
It is easy to see that $\{S_1,\ldots,S_d\}$ is a irreducible set and $C^*([I]^\perp)$ contains all the compact operators on $[I]^\perp$.  Without loss of generality, suppose $I_1$ is not prime. Then by the proof of Theorem \ref{thm:main}, we can find a polynomial $\varphi\in I_2\cap\ldots\cap I_n$ such that $\varphi\notin\sqrt{J_1}$. Choose $g\in\sqrt{I_1}-I_1$ and let $f=\varphi g$, then we have $f\in \sqrt{I_1}\cap I_2\cap\ldots\cap I_n$ and $f\not\in {I_1}\cap I_2\cap\ldots\cap I_n$. Then $f|_{V}=0$ and $S_f=P_{[I]^\perp}M_f|_{[I]^\perp}\not=0$. By the Spectral Mapping Theorem,
$$
\sigma_e(S_f)=f(\sigma_e(S_1,\ldots,S_d))=\{0\}.
$$
Since $S_f$ is essentially normal, it must be compact. On the other hand, since $f\not\in {I_1}\cap I_2\cap\ldots\cap I_n$, we have $S_f\not=0$ and then $\|S_f\|>\|S_f\|_e=0$. By Arveson's boundary representation theorem, the lemma is proved.
\end{proof}
To study the negative proposition of Theorem \ref{thm:boundary}, we have the following result. Its proof follows from \cite[page 292, Corollary 2]{Arv1} and the proof of Proposition \ref{prop:shift}.
\begin{prop}
Suppose that $I$ is a prime homogenous ideal such that $Z(I)$ is a distinguished variety. Then the identity representation of $C^*([I]^\perp)$ is not a boundary representation for $\cal B(S_1,\ldots, S_d)$.
\end{prop}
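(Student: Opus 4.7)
The plan is to reduce the statement to a single-variable assertion about the shift on a weighted Bergman space and then appeal directly to Arveson's Corollary 2.

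Since $I$ is prime with distinguished zero variety $V = V_\theta$, I would first use the unitary $U_\theta$ of Remark \ref{rem:shif} to reduce to the case $\theta=(1,\ldots,1)$, so that $I=J$ and $\mathcal{M}=\mathcal{N}=[J]^{\bot}$. Taking $N=1$ in the construction of Section 2 we have $\mathcal{B}=\mathcal{B}_0=\{1\}$, so the unitary $U$ defined in (\ref{eq:U}) coincides, up to normalization, with the restriction map $r$ and identifies $\mathcal{N}$ with the weighted Bergman space $\mathcal{A}_{d-2}$. Applying Proposition \ref{prop:multiplier} with $m=n=0$ then yields $U S_{z_i}U^{*}=M_{r(z_i)}=M_z$ on $\mathcal{A}_{d-2}$ for every $i=1,\ldots,d$, so the $d$ compressed coordinate multipliers collapse to a single Bergman shift.

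Consequently the operator system $\mathrm{span}\bigl(\mathcal{B}(S_1,\ldots,S_d)\cup\mathcal{B}(S_1,\ldots,S_d)^{*}\bigr)$ is unitarily equivalent to the three-dimensional operator system $\mathcal{E}:=\mathrm{span}\{I,M_z,M_z^{*}\}$ acting on $\mathcal{A}_{d-2}$. By Theorem \ref{thm:ArvBound}, proving that the identity representation is not a boundary representation for $\mathcal{B}(S_1,\ldots,S_d)$ amounts to verifying that the quotient map $Q:B(\mathcal{A}_{d-2})\to B(\mathcal{A}_{d-2})/K(\mathcal{A}_{d-2})$ is completely isometric on $\mathcal{E}$.

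At this stage I would invoke Arveson's Corollary 2 on page 292 of \cite{Arv1}, which addresses exactly this situation for the Bergman shift. The ingredients required are: $M_z$ is a pure essentially normal contraction with $\sigma_e(M_z)=\mathbb{T}$ and $[M_z^{*},M_z]$ trace class; subharmonicity of $|a+bz+c\bar{z}|^{2}$, giving $\sup_{|z|\le1}|a+bz+c\bar{z}|=\sup_{|z|=1}|a+bz+c\bar{z}|$; and the off-diagonal Hilbert--Schmidt estimates of the type derived in the proof of Proposition \ref{prop:shift} (in particular the $O(k^{-1})$ bounds on the coefficients $a_{m,n}(k)$), which ensure that $M_z$ differs from a genuine isometry only by a Hilbert--Schmidt operator, so that $\pi(M_z)$ is a unitary in the Calkin algebra whose spectrum is exactly $\mathbb{T}$. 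Arveson's corollary packages these ingredients into the conclusion that $Q$ is completely isometric on $\mathcal{E}$.

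The main obstacle is the complete isometry verification (as opposed to mere isometry at the scalar level): one needs to control the norm of every $n\times n$ matrix amplification of an element of $\mathcal{E}$ modulo compacts. This is precisely the content of Arveson's corollary, and it relies in an essential way on the non-isometric character of the Bergman shift; were $M_z$ a genuine isometry (as in the Hardy-space case), the argument would break down and the identity representation would in fact be a boundary representation. Thus the distinction between the Hardy and Bergman cases is exactly what makes the prime distinguished variety case sit on the ``non-boundary'' side of Theorem \ref{thm:boundary}.
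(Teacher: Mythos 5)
Your route is essentially the paper's own: the proof given there consists of exactly the two ingredients you invoke, namely the Section 2 identification of the prime case (for $N=1$ the map $U$ of (\ref{eq:U}) is just the restriction map, it carries $[J_\theta]^\perp$ onto $\mathcal{A}_{d-2}$, and by Proposition \ref{prop:multiplier} and Remark \ref{rem:shif} every $S_{z_i}$ becomes a unimodular multiple of the single Bergman-type shift $M_z$), followed by an appeal to \cite[p.~292, Corollary 2]{Arv1} via the computations behind Proposition \ref{prop:shift}. So the reduction and the citation match the intended argument; your use of primeness to get $I=J_\theta$ (Nullstellensatz) is the right first step.

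Two cautions. First, your closing claim that if $M_z$ were a genuine isometry (the Hardy shift) ``the identity representation would in fact be a boundary representation'' is wrong: for the unilateral shift the Calkin quotient is also completely isometric on Toeplitz operators with harmonic symbol (Arveson's classical computation that the compacts form the \v{S}ilov boundary ideal of the disc algebra), so the identity representation of $C^*(S)$ is likewise not a boundary representation there. What makes the quotient map completely isometric is the harmonic symbol calculus together with the maximum principle and $\sigma_e(M_z)=\mathbb{T}$, not the failure of $M_z$ to be an isometry; the true dichotomy in this paper (Theorem \ref{thm:boundary}) is prime versus non-prime ideal, not Bergman versus Hardy. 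Second, since the proof of Theorem \ref{thm:boundary} feeds operators $S_f$ with $f$ a polynomial of arbitrary degree into Theorem \ref{thm:ArvBound}, $\mathcal{B}(S_1,\ldots,S_d)$ should be read as the algebra generated by the $S_{z_i}$'s; the complete isometry therefore has to be checked on the span of all $p(M_z)+q(M_z)^*$, not only on the three-dimensional system $\mathrm{span}\{I,M_z,M_z^*\}$ as you state. This is harmless, because the same subharmonicity/maximum-principle estimate gives $\|p(M_z)+q(M_z)^*\|=\sup_{\mathbb{T}}|p+\bar{q}|=\|\pi(p(M_z)+q(M_z)^*)\|$, also for matrix-valued harmonic polynomial symbols, but as written your verification covers only degree-one symbols.
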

\begin{exam}
With the above notations, suppose that all the $I_i$'s are prime. If the $Z(I_i)$'s are orthogonal to each other, then the $[I_i]^\perp\ominus \mathbb C$'s are orthogonal to each other by Remark \ref{rem:ortho}. It follows that in this case, the identity representation of $C^*([I]^\perp)$ is not a boundary representation for $\cal B(S_1,\ldots, S_d)$.
\end{exam}

\vskip2.5mm\noindent {\bf Acknowledgements.} Both the authors would like to thank Professor Kunyu Guo for his encouragements and interests.

\end{document}